\newcommand\vectdue[2]{\left(\begin{matrix}#1\cr #2\end{matrix}\right)}
\newcommand\R{\mathbb{R}}
\newcommand\Z{\mathbb{Z}}
\newcommand\N{\mathbb{N}}
\newcommand\calH{\mathcal{H}}
\newcommand\calL{\mathcal{L}}
\newcommand\e{\varepsilon}
\newcommand\eps{\varepsilon}
\newcommand\supp{\mathop{\mathrm{supp}}}
\newcommand\dist{{\mathrm{dist}}}
\newcommand\rel{{\mathrm{rel}}}
\newcommand\Id{{\mathrm{Id}}}
\newcommand\Tr{{\mathrm{Tr}}}
\newcommand\rank{{\mathrm{rank\,}}}
\newcommand\loc{{\mathrm{loc}}}
\newcommand\diam{\mathop{\mathrm{diam}}}
\newtheorem{theorem}{Theorem}[section]
\numberwithin{equation}{section}
\newtheorem{lemma}[theorem]{Lemma}
\newtheorem{remark}[theorem]{Remark}
\newtheorem{proposition}[theorem]{Proposition}
\begin{document}
\begin{center}
{ \LARGE
Singular kernels, multiscale decomposition of 
microstructure, and dislocation models \\[5mm]}
%
{\today}\\[5mm]
Sergio Conti$^{1}$, Adriana Garroni$^{2}$, and Stefan M\"uller$^{1,3}$\\[2mm]
{\em $^{1}$ 
Institut f\"ur Angewandte Mathematik, 
Universit\"at Bonn\\ Endenicher Allee 60, 53115 Bonn,
Germany}\\[1mm]
{\em $^{2}$ Dipartimento di Matematica, Sapienza, Universit\`a di Roma\\
P.le A. Moro 2, 00185 Roma, Italy}\\
[1mm]
{\em $^{3}$ 
Hausdorff Center for Mathematics, Universit\"at Bonn\\
Endenicher Allee 60, 53115 Bonn,
Germany}
\\[3mm]
\begin{minipage}[c]{0.8\textwidth}
We consider a model for dislocations in crystals introduced 
by Koslowski, Cuiti{\~n}o and Ortiz, which includes elastic
interactions via a singular kernel behaving as the $H^{1/2}$ 
norm of the slip. We obtain a sharp-interface limit
of the model 
within the framework of $\Gamma$-convergence. 
From an analytical point of view, our functional is a vector-valued
generalization of the one studied by Alberti, Bouchitt\'e and Seppecher to
which their rearrangement argument no longer applies. 
Instead we show that the microstructure
must be approximately one-dimensional on most length scales and exploit this
property to derive a sharp lower bound.
\end{minipage}
\end{center}

\section{Introduction}
\subsection{The result}\label{result}

We consider the functional
\begin{alignat}{1}\label{uno}
E_\e^*[u,\Omega]=& \int_{\Omega\times\Omega}
\sum_{i,j=1}^N
\Gamma_{ij}(x-y)
\left[ u_i(x)-u_i(y)\right] \left[ u_j(x)-u_j(y)\right] dxdy
\nonumber \\
&
+\frac1\e\int_\Omega\dist^2(u(x), \Z^N) dx 
\end{alignat}
where $\Omega\subset\R^2$ and $\Gamma$ is a matrix-valued kernel scaling as
$|x-y|^{-3}$, i.e., the first term is bounded from 
above and below by a multiple of the $H^{1/2}$ norm. 

This functional arises in the study of  phase field models for dislocations
(see Section~\ref{dislocations} below). 
Its main feature is that it contains two competing terms: a nonconvex term which favours
integer values of the vector-valued phase field $u$, and a regularizing term. 
This is an example of a large class of problems which share this structure, 
the classical example being the well-known Cahn-Hilliard model from the
gradient-theory of fluid-fluid phase transitions, which contains a two-well
potential depending on a scalar phase field, and a local regularization given
by the Dirichlet integral. The analysis of the asymptotic behavior in terms of
$\Gamma$-convergence for this functional goes back to Modica and Mortola
\cite{ModicaMortola77}, see also \cite{Modica87,Sternberg88}. 
Generalizations to
multiwell problems, to vector-valued problems, and to anisotropic
regularizations have been studied by several authors
\cite{Baldo90,Bouchitte90,FonsecaTartar89,BarrosoFonseca94}. 
All these problems give rise in the limit to a sharp-interface model
characterized by a line-tension energy density.
The local character of the regularization leads to a scaling
property that permits to identify the line-tension energy density through a
cell-problem formula. 

The functional (\ref{uno}) is substantially more challenging since the
regularization via the Dirichlet integral is replaced by a singular nonlocal
term, which behaves as the $H^{1/2}$ norm. The (logarithmic) failure of the embedding of $H^{1/2}$
into continuous functions reflects the fact that all length scales play
a role and that the appropriate rescaling is logarithmic. This eliminates the
possibility to select one dominant length scale and to focus on a cell problem
on that scale. An additional difficulty lies in the fact that 
(\ref{uno}) is a vectorial problem, anisotropic, and that
the lower-order term has infinitely many minima. 

In the scalar, isotropic case, after the mentioned logarithmic rescaling, the
functional (\ref{uno}) 
reduces to 
\begin{alignat}{1}\label{eqfunctionaintrol}
\frac{1}{\ln (1/\e)}\left[ \int_{\Omega\times\Omega} \frac{1}{|x-y|^{n+1}}
\left| u(x)-u(y)\right|^2 dxdy
+\frac1\e\int_\Omega W(u(x)) dx \right]\,,
\end{alignat}
where $W:\R\to[0,\infty)$ is a multiwell potential
(and $\Omega\subset\R^n$). A problem of this kind 
was first studied by 
Alberti, Bouchitt\'e and Seppecher, for the case of a two-well potential 
\cite{AlbertiBouchitteSeppecher1994,AlbertiBouchitteSeppecher1998}. 
With this scaling, they proved a compactness result which shows that the
domain of the limiting functional is $BV(\Omega;\{W=0\})$. Further, they
proved $\Gamma$-convergence to a sharp-interface limit.
The crucial idea is that even though (\ref{eqfunctionaintrol}) is a nonlocal functional,
rearrangement can be used very efficiently. 
Indeed even though the problem is nonlocal they show by rearrangement that
optimal interface profiles are one-dimensional.
In particular, the leading-order
part of the energy arises from the nonlocal interaction of the area where $u$
is close to one minimum of $W$ with the area where $u$ is close to the other
one. The criticality of the singular kernel implies that all distances
contribute, and therefore that the overall interaction is logarithmic in the
distance of the two sets. For the same reason, the limiting energy does not
depend on the precise structure of the profile between the two sets. 

The case of infinitely many wells and anisotropic kernel was treated by two of us in 
\cite{GarroniMueller2006} (see also \cite{GarroniMueller2005}).
The compactness is more subtle due to the non-coerciveness of the multiwell
potential $\dist^2(u,\Z)$. The phenomenology is similar, and in particular
optimal interface profiles remain one-dimensional, and anisotropy gives
rise to an anisotropic line-tension energy of the form 
\begin{equation}\label{eqgammalimitscalarcase}
\int_{J_u} \gamma(\nu) |u^+-u^-| \, d\calH^{1}\,,\hskip1cm
u\in BV(\Omega;\Z)
\end{equation}
(in two spatial dimensions). Moreover, the line-tension energy density
$\gamma$ can be completely characterized in terms of the kernel $\Gamma$, i.e.
$$
\gamma(\nu)= 2\int_{\{x\cdot \nu=1\}} \Gamma(x)\, d\calH^1(x).
$$

In the present case the earlier rearrangement arguments do not apply, since
the phase field 
is vector-valued, and the nonlocal interaction is anisotropic
(note, however, that for certain vector-valued problems rearrangement
arguments can be used \cite{Alikakospc}).
Nonetheless one can abstractly prove that a $\Gamma$-limit exists, and that it
has the form 
\begin{equation}
\int_{J_u} \gamma(\nu,u^+-u^-) \, d\calH^{1}\,,\hskip1cm
u\in BV(\Omega;\Z^2)\,,
\end{equation}
but one does not have any further information on the line-tension energy
density $\gamma$ \cite{CacaceGarroni,Cacacethesis}. One can 
naively try to use the natural generalization of the formula derived in the
scalar case (\ref{eqgammalimitscalarcase}), namely,
\begin{equation}\label{eqdefgamma0intro}
\gamma_0(\nu,s)=2\int_{\{x\cdot\nu=1\}} s^T\Gamma(x)s\, d\calH^1(x)\,.
\end{equation}
However, this does, in general, not produce a lower semicontinuous functional
\cite{CacaceGarroni,Cacacethesis}, whereas the $\Gamma$-limit must be lower semicontinuous. This in 
particular implies that interfaces are more complicated and produce microstructure. The natural question
is whether the $\Gamma$-limit is characterized by the $BV$-relaxation of the 1D
interfacial 
energy (\ref{eqdefgamma0intro}) (see Remark 
\ref{RemarkdefBVrelaxation} below for details).

In this paper we assume that
\begin{equation}
\label{eq:derGammaintro}
\Gamma(z)=\frac{1}{|z|^{3}} \hat\Gamma\left(\frac{z}{|z|}\right)\,,
\end{equation}
where $\hat\Gamma\in L^\infty(S^1;\R^{N\times N}_+)$ obeys, for some $c>0$,
\begin{equation}\label{eqgammaposdeftheointro}
\frac{1}{c}|\xi|^2\le \xi\cdot \hat\Gamma(z)\xi\le c|\xi|^2 \hskip5mm 
\text{ for all } \xi\in \R^N, \, z\in S^1\,,
\end{equation}
and $\R^{N\times N}_+$
denote the positive definite, symmetric, $N\times N$ matrices. 
\begin{theorem}\label{theorem1}
Let $\Omega\subset\R^2$ be a bounded Lipschitz domain,
and suppose that the kernel $\Gamma$ satisfies (\ref{eq:derGammaintro}) and 
(\ref{eqgammaposdeftheointro}). Then
\begin{equation*}
\Gamma\hbox{-}\lim_{\e\to 0} 
\frac{1}{\ln (1/\e)} 
E_{\e}^* = E_0^\rel \,,
\end{equation*}
where
\begin{equation*}
E_0^\rel[u_0,\Omega]=
\begin{cases}
\displaystyle \int_{J_{u_0}\cap\Omega} \gamma_0^\rel(\nu,[u_0]) d\calH^1 &
\text{ if } 
u_0\in BV(\Omega;\Z^N)\\
\infty & \text{ else.}
\end{cases}
\end{equation*}
The surface energy $\gamma_0^\rel$ is the $BV$-relaxation of $\gamma_0$, as
defined in (\ref{eqdefgamma0intro}) (see 
Remark \ref{RemarkdefBVrelaxation} 
below for the definition of the $BV$-relaxation).
\end{theorem}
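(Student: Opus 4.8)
The plan is to prove the $\Gamma$-convergence by establishing separately the compactness of sequences with bounded energy, the lower bound (the $\liminf$ inequality), and the upper bound (construction of a recovery sequence). Compactness should follow the strategy of \cite{GarroniMueller2006}: from the logarithmically rescaled bound on $E_\e^*$ one extracts, component by component, that $u^\e$ is close in $L^1$ to a $\Z^N$-valued limit $u_0$, and a control of the $H^{1/2}$-type seminorm of a suitable truncation gives the $BV$ bound on $u_0$ together with the control $\calH^1(J_{u_0})<\infty$. The non-coercivity of $\dist^2(\cdot,\Z^N)$ is handled exactly as in the scalar multiwell case, truncating far from $\Z^N$ and using a Poincar\'e-type inequality for the seminorm. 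For the upper bound, since $\gamma_0^\rel$ is by definition the $BV$-relaxation (lower semicontinuous envelope) of the $1$D energy density $\gamma_0$ from \eqref{eqdefgamma0intro}, it suffices to build recovery sequences for polyhedral $u_0$ whose jump is along a single direction $\nu$ with a single value $s\in\Z^N$: there one uses the \emph{one-dimensional optimal profile} interpolating between the two values of $u_0$ across the interface and a logarithmic cutoff at scales $\e$ and $1$, computing the interaction energy as in \cite{AlbertiBouchitteSeppecher1994,GarroniMueller2006} and matching it, up to $o(\ln(1/\e))$, with $\gamma_0(\nu,s)\calH^1(J_{u_0})$; then one glues such profiles along a polyhedral interface, diagonalizes over a sequence of microstructures realizing the relaxation, and passes to the general $BV$ limit by density in the strict topology and lower semicontinuity of the target functional.

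The core of the paper, and the hard part, is the lower bound, because the rearrangement trick of Alberti--Bouchitt\'e--Seppecher is unavailable: one can no longer reduce an arbitrary competitor to a one-dimensional profile by monotone rearrangement, and the anisotropy of $\hat\Gamma$ together with the vector target means the optimal local structure is genuinely a \emph{microstructure} rather than a single profile. The strategy announced in the abstract is to show that a bounded-energy competitor must be \emph{approximately one-dimensional on most length scales}: after localizing near a point of $J_{u_0}$ and fixing the jump data $(\nu,s)$, one shows that for dyadic scales $2^{-k}$, $k$ ranging over an interval of length $\sim\ln(1/\e)$, the function $u^\e$ restricted to squares of side $2^{-k}$ is, for most $k$, close in an averaged sense to a function depending on one variable only. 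The mechanism is a \emph{dyadic decomposition of the nonlocal energy}: the $|x-y|^{-3}$ kernel makes the interaction between annuli at scales $2^{-k}$ and $2^{-k-1}$ of order one, so the total energy is $\sim\ln(1/\e)$ times an average over scales of a scale-$k$ "defect measure" of the slip, and one shows this per-scale quantity is bounded below by $\gamma_0$ evaluated on the effective one-dimensional slip profile at that scale, up to an error summable over scales. The multiscale/energy-splitting bookkeeping — controlling the cross-scale interactions, passing from the full kernel to its one-dimensional trace $\gamma_0(\nu,\cdot)$, and then recognizing that averaging over scales of these one-dimensional bounds produces precisely the convex/BV relaxation $\gamma_0^\rel$ — is the technical heart and the main obstacle.

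Concretely, for the lower bound I would: (i) by the blow-up method of Fonseca--M\"uller, reduce to showing, for a.e.\ $x_0\in J_{u_0}$, that $\liminf_\e \frac{1}{\ln(1/\e)} E_\e^*[u^\e,Q_\rho(x_0)]\ge \gamma_0^\rel(\nu(x_0),[u_0](x_0))\,\rho + o(\rho)$ on cubes oriented by $\nu(x_0)$; (ii) decompose $Q_\rho\times Q_\rho$ into near-diagonal dyadic shells and bound the energy from below by $\sum_k$ of a localized scale-$k$ energy, discarding the (nonnegative) long-range cross terms; (iii) on each retained scale, use \eqref{eqgammaposdeftheointro} and a slicing argument in the $\nu$ direction to bound the scale-$k$ contribution below by $\int_{\{x\cdot\nu=1\}} (s_k)^T\hat\Gamma(x/|x|)\, s_k\, |x|^{-3}\,d\calH^1$ applied to an effective integer-valued jump $s_k$ of a one-dimensional approximant at that scale, i.e.\ by $\gamma_0(\nu, s_k)$ times a length; (iv) observe that the collection of jumps $(s_k)$, being the jump set of functions valued in $\Z^N$ realizing the same net transition $[u_0](x_0)$, can be recombined, and that minimizing the scale-average of $\gamma_0(\nu,s_k)$ over all such admissible decompositions yields exactly $\gamma_0^\rel(\nu,[u_0](x_0))$ by the very definition of the $BV$-relaxation in Remark \ref{RemarkdefBVrelaxation}; (v) sum over $k\sim\ln(1/\e)$ and divide by $\ln(1/\e)$ to conclude. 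Throughout, the delicate estimate is step (iii): quantifying "approximately one-dimensional at scale $k$" and controlling the accumulated errors so they are $o(\ln(1/\e))$ after summation — this is where the bulk of the work lies.
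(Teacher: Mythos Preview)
Your high-level plan is aligned with the paper: dyadic decomposition of the singular kernel, a per-scale lower bound coming from an approximate one-dimensionality, and averaging over $\sim\ln(1/\e)$ scales. The upper bound sketch is fine and matches Section~\ref{upperbound}. The genuine gap is in step~(iii), and it propagates to~(iv).

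You propose to obtain, at each scale $2^{-k}$, an effective one-dimensional jump $s_k$ by ``a slicing argument in the $\nu$ direction''. But slicing is exactly what fails here: in the vector, anisotropic setting there is no a~priori direction $\nu$, and even after blow-up the competitor may oscillate transversally to $\nu(x_0)$ (this is precisely the microstructure of \cite{CacaceGarroni}, Figure~\ref{fig:cacace}). Slicing along a fixed direction gives no control on the transverse oscillation, and without rearrangement you cannot force the per-scale profile to be monotone or even scalar-valued. The paper's substitute for rearrangement is the \emph{mollification defect}: after replacing $u_\e$ by a $BV(\Omega;\Z^N)$ function $v$ (Proposition~\ref{propmakeubv}), one mollifies iteratively at scales $2^{-h}$ and observes that the telescoping sum $\sum_h\bigl(|Du_{h+m}|-|Du_h|\bigr)$ is bounded by $|Dv|(\Omega)$, so for most $h$ the defect $|Du|-|D(u\ast\varphi_h)|$ is small. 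Lemma~\ref{lemma1dlocal} then converts a small mollification defect into $Du$ being close in $L^1$ to a \emph{rank-one} field $A(A\cdot Du)_+$, after which a Korn-type estimate (Proposition~\ref{propkornpoincare}) and a one-sided Poincar\'e (Lemma~\ref{lemmapoincareoned}) produce the local one-dimensional approximant (Theorem~\ref{propmakerankone}). None of this is slicing; it is a rigidity argument driven by the near-equality case of $\|f\ast\varphi\|_{L^1}\le\|f\|_{L^1}$.

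Your step~(iv) also does not match how the relaxation enters. The paper does not average $\gamma_0(\nu,s_k)$ over scales and then minimize over decompositions of $[u_0]$. Instead, at \emph{one} good scale $h$ it assembles the local one-dimensional pieces into a global $w\in BV(\omega;\Z^N)$ (Proposition~\ref{propconstruction}) and bounds $E_0^\rel[w,\omega]$ by the single-layer energy $p_{\Gamma_{h+t},\Omega}(v)$ plus errors; the relaxation appears only because one must use $\gamma_0^\rel$ (not $\gamma_0$) to absorb the $L^1$-controlled gluing jumps on $\partial Q_{a,z}$. The lower bound then follows by lower semicontinuity of $E_0^\rel$ along $w_j\to u_0$ (Proposition~\ref{proplowerbound}), not by a scale-by-scale cell formula. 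Your blow-up reduction is in principle compatible with this, but the core mechanism you are missing is the mollification-defect route to approximate one-dimensionality.
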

The corresponding compactness statement, namely, that sequences $u_\eps$ such
that $E_\eps^*[u_\eps,\Omega]/\ln(1/\eps)$ is bounded have a subsequence converging
to a limit $u\in BV(\Omega;\Z^N)$, can be immediately derived from the scalar
results in \cite{GarroniMueller2005,GarroniMueller2006} or from Proposition
\ref{propmakeubv} below.

Let us briefly sketch the strategy of our proof.
The difficulty is the proof of the lower bound. Due to the logarithmic
behaviour, the problem does not have an intrinsic natural scale, and so the
lower bound cannot be reduced to the study of an asymptotic cell problem
formula. The new idea in dealing with this kind of singular kernels is to
perform a dyadic decomposition of the kernel with a sequence of truncated
kernels. Each term in this decomposition is then regular, 
and one could hope to use the ideas of Alberti-Bellettini for non local phase
transition models with regular kernels
\cite{AlbertiBellettini1998a,AlbertiBellettini1998b}. But there 
is another obstacle in order to implement this strategy. In principle each
regular term in the decomposition could be optimized by very different
structures and the choice of one of them could produce a gross
underestimation. It is natural to conjecture that this 
does not happen, but this is not so easy to prove directly, and might
depend on finer details. 

We thus look for a more robust method which does not require such a detailed
analysis of the optimal structures. Roughly speaking we exploit the fact that
a $BV$ function cannot have significant microstructure on all scales
simultaneously. Since all scales participate roughly equally in the total
energy the few potentially bad scales can be ignored in the limit (see Section
\ref{outline} for a more detailed description of this idea). We focus
here on dimension two in view of the physical model which motivated 
our work. The decomposition strategy, however, is not restricted to two
dimensions. A related logarithmic decomposition strategy has also proved
useful in the codimension-two context of vortices in Ginzburg-Landau models,
see \cite{BethuelBrezisHelein1994,Struwe1994,SandierSerfaty2007} and references therein.

\subsection{Connection with a phase field model for dislocations}
\label{dislocations}
Functionals of the type under consideration arise in the study of phase field
models for dislocations inspired by the Peierls-Nabarro model (see
e.g. \cite{KCO}). 
Dislocations are line defects in crystals that are responsible for plastic
behaviour. They usually arise on special planes (the slip planes) that are
determined by the crystalline structure, and can be seen as the
discontinuities of a slip on this plane. Depending on the crystalline
structure on each slip plane several slip directions (Burgers vectors) are
possible, so that the slip can be represented as a vector-valued function
whose components represent the slip along a given Burgers vector. The idea of
the Peierls-Nabarro model, originally formulated for a one dimensional
problem, is to express the free energy in terms of the slip $u$ as follows 
$$ 
E_{\rm free}[u]= E_{\rm elastic}[u] + E_{\rm interfacial}[u]\,,
$$
where the first term represents the long-range elastic distortion due to the
slip and the second term is a nonlinear interfacial potential that remembers
the crystal lattice and penalizes slips that are not integer multiples of the
Burgers vectors. The main interest of this model is the persistence of
discrete features in a continuum setting. 
The reformulation of this model proposed by Koslowski-Cuiti{\~n}o-Ortiz
\cite{KCO,KoslowskiOrtiz2002} for the case of dislocations on a given slip
plane, using $N$ different slip systems determined by the Burgers vectors
${b}_1$, ..., ${b}_N$, considers slips 
$$
u_1{b}_1+...+u_N{b}_N
$$
where $u:\Omega\subset\R^2\to \R^N$. 
The interfacial energy favours values of $u$ close to $\Z^N$.
The nonlocal part is the bulk
elastic energy, which is given by the integral over the three-dimensional set
$\Omega\times\R$ of a quadratic form of the 
gradient of the displacement $U:\Omega\times\R\to\R^3$
induced by the slip $u$. 
Precisely, $U$ minimizes
the elastic energy $\int_{\Omega\times \R} \langle C\nabla U, \nabla U\rangle\, dx$ over all
vector fields which jump by $\sum u_ib_i$ on 
$\Omega\times\{x_3=0\}$. In the case of isotropic materials this reduces to
$$ \int_{\Omega\times \R} \frac{\mu}{2} |e(U)|^2+\lambda |{\rm tr}(e(U)|^2dx\,,
$$
where $e(U)=\frac{\nabla U+\nabla 
U^t}{2}$ is the linearized strain. 
Minimizing out $U$ leads to a nonlocal functional of $u$ of the kind of 
(\ref{uno}) (with some differences due to boundary effects, which do
not influence the leading-order behavior, see 
\cite{GarroniMueller2005,GarroniMueller2006}).

The connection with this application on dislocations produces very
interesting examples for the functional (\ref{uno}). In particular the kernel
arising from isotropic elastic interaction can be explicitly computed
(up to the boundary terms) for
different sets of Burgers vectors. For instance in the
case of a 
pair of orthogonal Burgers vectors (corresponding to square
symmetry) the explicit computation shows that the matrix-valued kernel
defined in (\ref{eqdefgamma0intro}) takes
the form \cite{CacaceGarroni}
\begin{equation}\label{eqgammacubic}
\gamma(\nu,s) = \frac{1}{4\pi (1-\tilde\nu)}
s\cdot \begin{pmatrix}
2-2\tilde\nu \sin^2\theta & \tilde\nu \sin 2\theta\\
\tilde\nu \sin 2\theta & 2-2\tilde\nu \cos^2\theta
\end{pmatrix} s\,.
\end{equation}
In this equation $\tilde\nu\in[-1,1/2]$ is the materials' Poisson ratio, and
$\theta$ 
characterizes the direction of the normal $\nu=(\cos\theta,\sin\theta)$ to the
interface. 
Notice that the given quadratic form is 
positive definite but the
off-diagonal entries are, for some values of $\tilde\nu$ and $\theta$, nonzero.

Consider now for example an interface in direction
$\nu=(\cos\theta,\sin\theta)$ between a region 
where $u$ equals $u_0=(0,0)$ and one
where $u$ equals $u_1=(1,1)$. The energy per unit length is given by
\begin{equation}\label{u1u0gamma}
\gamma(\nu, u_1-u_0)=
(u_1-u_0)\cdot
\hat\gamma\,
(u_1-u_0)
=\hat\gamma_{11}+2\hat\gamma_{12}+\hat\gamma_{22}\,,
\end{equation}
where $\hat\gamma$ is the matrix representation of the  quadratic form
$\gamma(\nu, \cdot)$.
If a thin layer where $u$ takes the value $u_2=(0,1)$ is inserted in between
(see Figure \ref{fig:cacace}, middle panel), then the sum of the two
interfaces has the energy
\begin{equation}\label{u1u0u2gamma}
\gamma(\nu, u_2-u_0)=
(u_2-u_0)\cdot \hat\gamma\,
(u_2-u_0)
+
(u_1-u_2)\cdot
\hat\gamma \,(u_1-u_2)
=\hat\gamma_{11}+\hat\gamma_{22}\,.
\end{equation}
If $\gamma$ takes the form (\ref{eqgammacubic}), then
 one or the other is more
convenient depending on the sign of $\tilde\nu\sin2\theta$. It is therefore
clear that the relaxation will choose for each 
direction the optimal decomposition of the total jump.
Cacace and Garroni \cite{CacaceGarroni} have shown that for some interfaces a more complex
relaxation takes place, and in particular that in some directions a smaller
energy is achieved by inserting fine-scale oscillations in the interface (see
Figure \ref{fig:cacace}, right panel). The intermediate $(0,1)$ layer is then inserted only in the
part of the interface in which $\tilde\nu\sin2\theta$ is positive. 
Their
construction proves that the $BV$-relaxation of the surface energy obtained for
one-dimensional interfaces is nontrivial. It is possible to prove that this
oscillatory construction indeed gives the $BV$-relaxation for this case. 

\begin{figure}[t]
\centering
\psfrag{labeln}{ {\tiny$ 
\begin{pmatrix}
\cos\theta\\ \sin\theta
\end{pmatrix}
$}}
\psfrag{labelu0}{{ \tiny $ u\!\!=\!\!
\begin{pmatrix}
0\\0 
\end{pmatrix}
$}}
\psfrag{labelu1}{{ \tiny $ u\!\!=\!\!
\begin{pmatrix}
1\\1
\end{pmatrix}
$}}
\psfrag{labelu2}{{ \tiny $ u\!\!=\!\!
\begin{pmatrix}
0\\1
\end{pmatrix}
$}}
\includegraphics[width=0.3\textwidth]{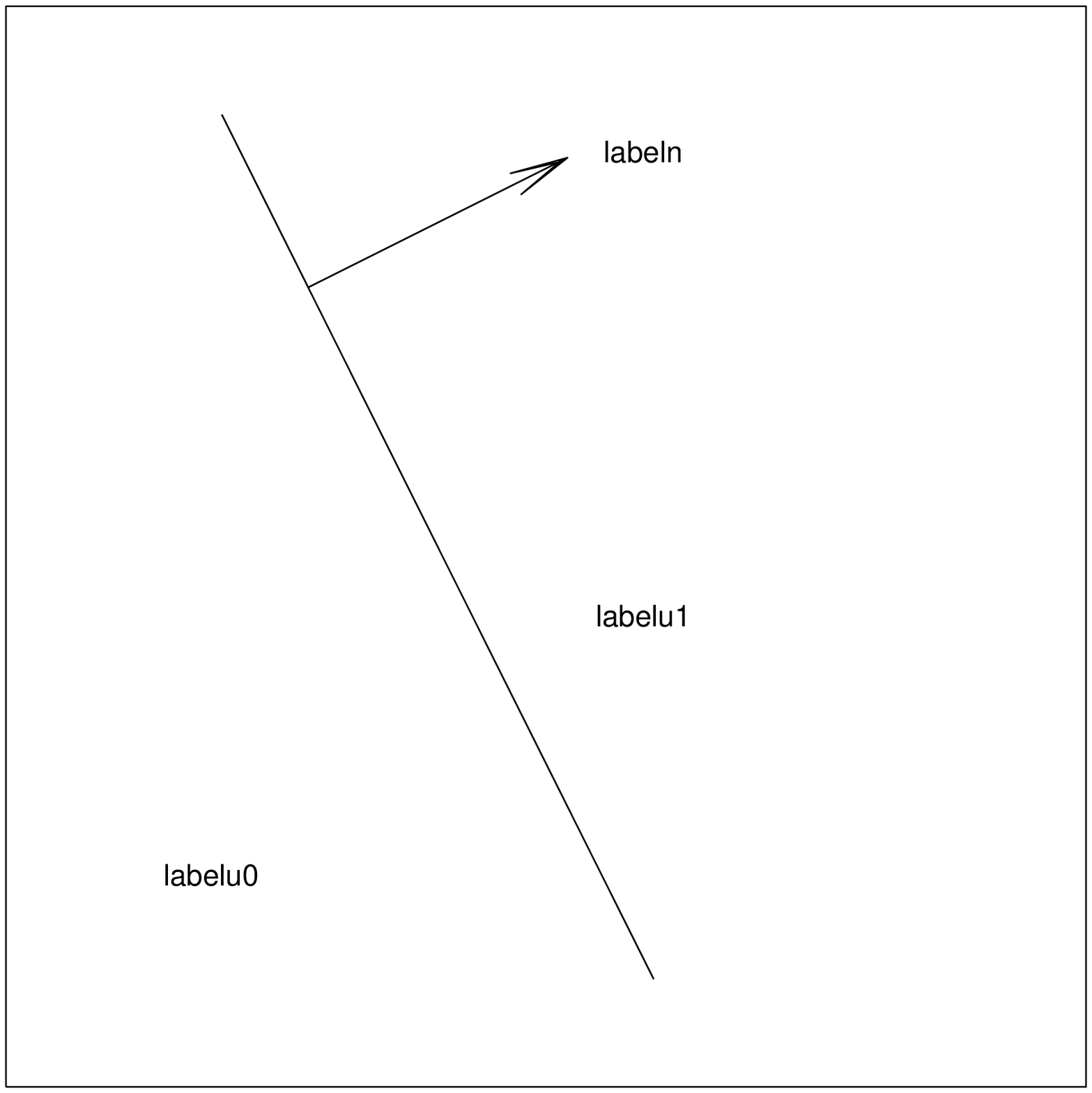}
\includegraphics[width=0.3\textwidth]{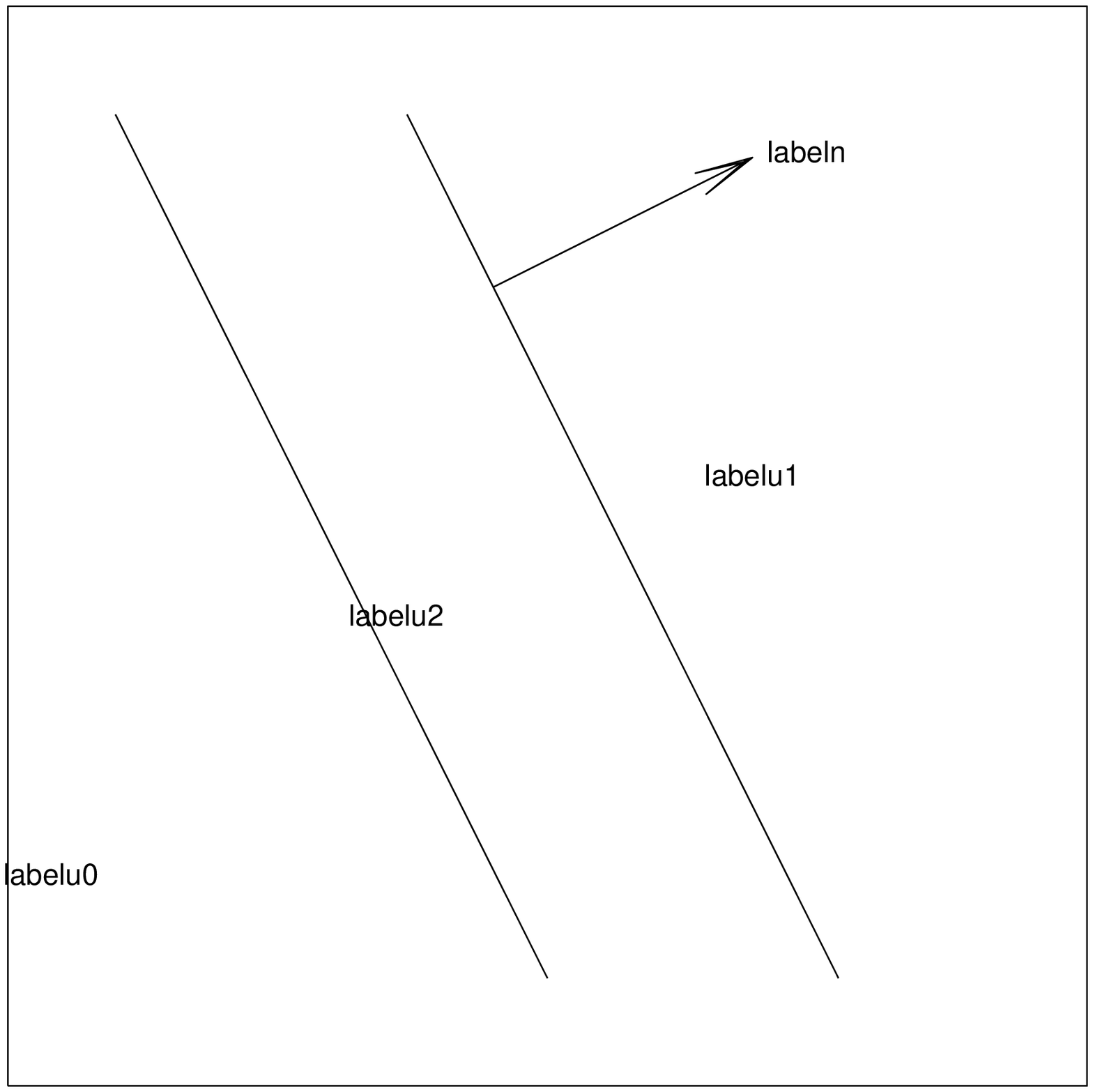}
\includegraphics[width=0.3\textwidth]{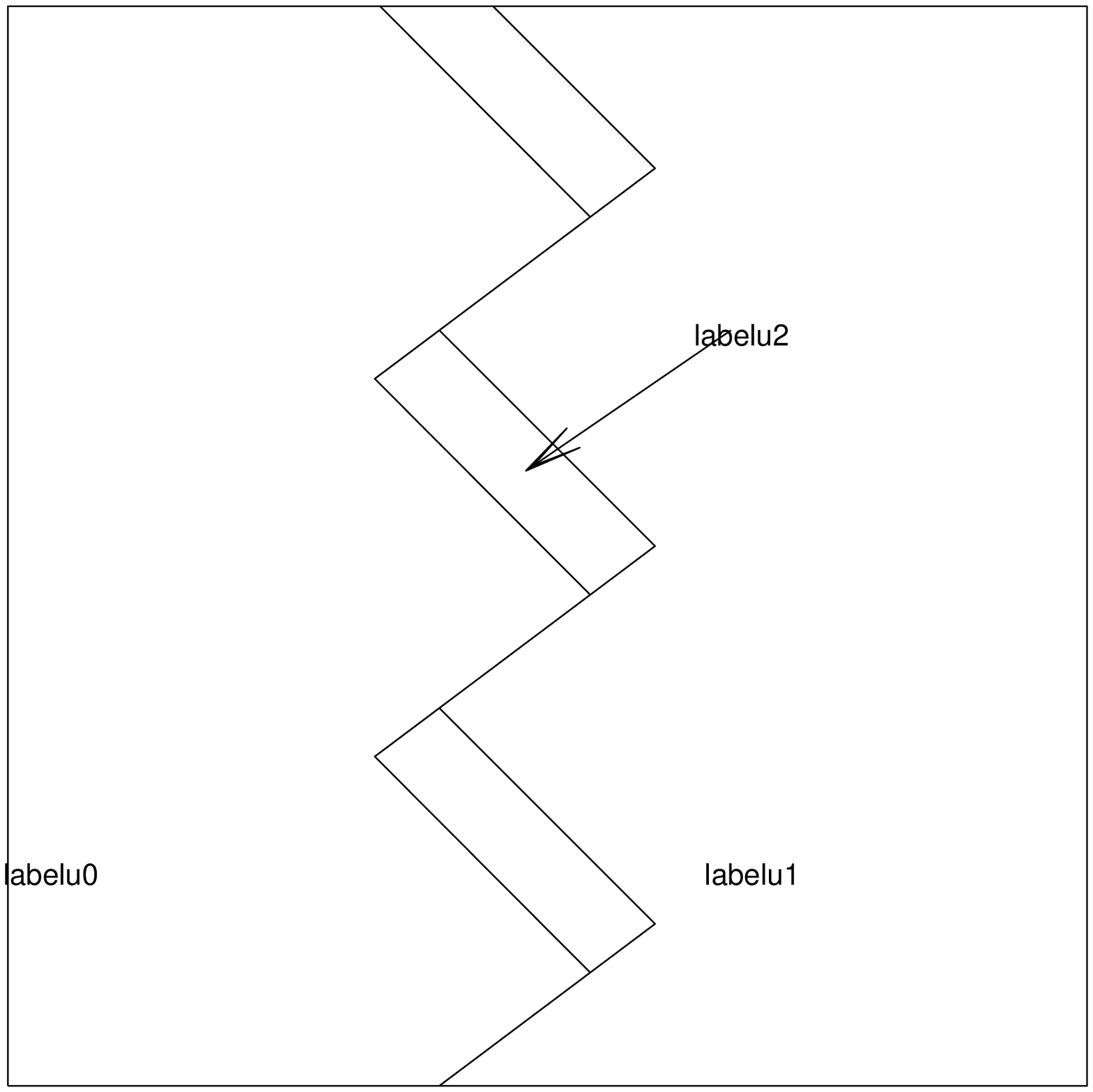}
\caption{Left: Sketch of the interface as in (\ref{u1u0gamma}). Middle:
interface in (\ref{u1u0u2gamma}). Right: oscillatory interfacial profile
corresponding to a macroscopic vertical interfaces which combines the two
options. 
\label{fig:cacace}}
\end{figure}

\section{Outline of the proof }\label{outline}

We consider, for $u:\Omega\subset\R^2\to\R^N$, 
\begin{alignat}{1}
\nonumber E_\e[u,\Omega]=\frac{1}{\ln1/\e}&
\left[ \int_{\Omega\times\Omega} \Gamma_{ij}(x-y)
\left[ u_i(x)-u_i(y)\right] \left[ u_j(x)-u_j(y)\right] dxdy\right.\\
& \left.
+\frac1\e\int_\Omega\dist^2(u(x), \Z^N) dx \right]
\end{alignat}
(sum over $i,j$ from 1 to $N$ is implicit). 
This differs from (\ref{uno}) in that the logarithmic factor is incorporated.

The upper bound follows directly from the abstract representation result
\cite{CacaceGarroni,Cacacethesis} and
the analysis of one-dimensional interfaces (see Section \ref{upperbound}).

The main point in the proof of Theorem~\ref{theorem1} is to establish the
following lower bound, whose proof will be concluded in Section
\ref{iterativemoll}. 

\begin{theorem}\label{theorem2}
Let $\Omega\subset\R^2$ be a bounded Lipschitz domain, and assume $u_0\in
BV(\Omega;\Z^N)$. Then for any sequences $\e_i\to0$, $u_i\to u_0$ in
$L^1(\Omega;\R^N)$ we have
\begin{equation*}
\liminf_{i\to\infty} E_{\e_i}[u_i,\Omega] \ge E_0^\rel [u_0,\Omega]\,,
\end{equation*}
where
\begin{equation*}
E_0^\rel[u_0,\Omega]=\int_{J_{u_0}\cap\Omega} \gamma_0^\rel(\nu,[u_0]) d\calH^1\,.
\end{equation*}
\end{theorem}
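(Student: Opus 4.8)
=== PROOF PROPOSAL (plan) ===

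\medskip

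\noindent\textbf{Strategy.}
The plan is to prove the lower bound by a dyadic decomposition of the singular kernel into regular pieces, combined with a pigeonhole argument that isolates the scales on which the configuration $u_i$ is essentially free of microstructure. Fix the target $u_0\in BV(\Omega;\Z^N)$ and a recovery-type competitor, and recall from the excerpt that without loss of generality we may assume $E_{\e_i}[u_i,\Omega]$ is bounded, so that by the compactness statement $u_i\to u_0$ with $u_0\in BV(\Omega;\Z^N)$. Write $|x-y|^{-3}\hat\Gamma(\cdot)$ as a sum $\sum_{k} \Gamma^{(k)}$ of kernels supported on dyadic annuli $2^{-k-1}\le|x-y|\le 2^{-k}$ (with a suitable smooth truncation); the point of (\ref{eq:derGammaintro})--(\ref{eqgammaposdeftheointro}) is that each $\Gamma^{(k)}$ scales like $2^{3k}$ times a fixed profile, so that the nonlocal term splits as a sum over $k$ of roughly $\ln(1/\e_i)$ comparable contributions, each of which is now a \emph{regular} nonlocal phase-transition energy at scale $2^{-k}$ of Alberti--Bellettini type.

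\medskip

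\noindent\textbf{Key steps.}
First I would make the lower-order term rigidly relevant: using the $\e^{-1}\dist^2(u_i,\Z^N)$ penalty and a one-dimensional slicing/optimal-profile estimate (the analogue of what was done in the scalar cases \cite{GarroniMueller2005,GarroniMueller2006}), reduce to configurations that are $\Z^N$-valued up to a set of small measure, so that the relevant object on each scale is the jump set and the vector of jumps $[u_i]$, measured against the $1$D density $\gamma_0(\nu,s)=2\int_{\{x\cdot\nu=1\}}s^T\Gamma(x)s\,d\calH^1$. Second, for each scale $k$, estimate the regularized energy from below by the $BV$-relaxed density $\gamma_0^\rel$: this is where the key geometric insight of the paper enters — on a single scale the competitor can be cut by hyperplanes and one shows the microstructure is approximately one-dimensional, so the Alberti--Bellettini lower bound for regular kernels yields, after integrating over the annulus, a contribution $\ge (\ln 2)\int_{J_{u_0}}\gamma_0^\rel(\nu,[u_0])\,d\calH^1 - (\text{error}_k)$. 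Third — the pigeonhole step — sum over $k=1,\dots,\sim\log_2(1/\e_i)$: since $u_0\in BV$ has finite total variation, it can carry genuine microstructure (oscillation of the normal or of the jate on scale $2^{-k}$) only on a summable-in-$k$ family of scales, so the total contribution of the ``bad'' scales is $o(\ln(1/\e_i))$ and, after dividing by $\ln(1/\e_i)$, only the good scales survive and they each give the full $\gamma_0^\rel$ density. Finally, pass to the $\liminf$ and let the truncation parameters go to their limits to conclude $\liminf_i E_{\e_i}[u_i,\Omega]\ge \int_{J_{u_0}}\gamma_0^\rel(\nu,[u_0])\,d\calH^1$.

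\medskip

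\noindent\textbf{Main obstacle.}
The hard part will be the per-scale lower bound in terms of $\gamma_0^\rel$ rather than $\gamma_0$: a priori each regular piece $\Gamma^{(k)}$ could be exploited by a wildly different microstructure, and a careless bound would only give the (non lower-semicontinuous, hence too large to be valid, or — depending on orientation — too small) density $\gamma_0$ and lose the relaxation. Controlling this requires showing that, although the construction may oscillate, on most scales the transverse structure is effectively one-dimensional, so that the sharp $1$D profile estimate applies slice-by-slice and, upon averaging over directions/translations within a dyadic annulus, automatically produces the \emph{relaxed} density; quantifying ``most scales'' and making the error terms genuinely $o(\ln(1/\e_i))$ uniformly is the delicate accounting at the heart of the proof. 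A secondary difficulty is handling the anisotropy and the vectorial nature of $u$ simultaneously with the truncation errors near $|x-y|\sim 1$ and near the boundary of $\Omega$, but these should be absorbable as in \cite{GarroniMueller2006} and do not affect the leading-order constant.
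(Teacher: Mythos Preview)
Your outline matches the paper's strategy in its broad strokes: dyadic decomposition of the kernel, replacement of $u_i$ by integer-valued $BV$ functions $v_k$ with controlled truncated energy, the insight that a $BV$ function cannot oscillate on all scales, and the selection of ``good'' scales by a pigeonhole argument. These are exactly the ingredients of Sections~\ref{layering}--\ref{iterativemoll}.

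Where your plan diverges from the actual mechanism is in Step~2 and in how $\gamma_0^\rel$ appears. You write that at each good scale the Alberti--Bellettini machinery (plus approximate one-dimensionality) yields a lower bound $\ge (\ln 2)\int_{J_{u_0}}\gamma_0^\rel(\nu,[u_0])\,d\calH^1 - \text{error}_k$, and that ``averaging over directions/translations within a dyadic annulus automatically produces the relaxed density.'' That is not what happens, and it is not clear how it could: the regular-kernel theory does not give $\gamma_0^\rel$ as its density, and at scale $h$ you are looking at $v_k$, not $u_0$. The paper never bounds the per-scale energy from below by anything involving $J_{u_0}$ directly. Instead, at a single well-chosen good scale $h$ it \emph{constructs} a new function $w\in BV(\omega;\Z^N)$, locally one-dimensional on each square of a grid of side $\sim 2^{-h}$, such that the \emph{unrelaxed} line energy $\int_{J_w}\gamma_0(\nu,[w])\,d\calH^1$ is controlled by $p_{\Gamma_{h+t},\Omega}(v_k)$ (Lemma~\ref{lemma1Dinterfacesseparate} and Proposition~\ref{propconstruction}). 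Since $\gamma_0^\rel\le\gamma_0$, the relaxed energy of $w$ is controlled too; the relaxed density then enters only at the very end, because $w=w_j\to u_0$ in $L^1$ and $E_0^\rel$ is lower semicontinuous.

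A second point you leave vague is how ``approximately one-dimensional'' is detected and quantified. The paper's device is the mollification defect $|Du|(\Omega)-|D(u\ast\varphi_h)|(\omega)$: if this is small, then $Du$ is close in $L^1$ to $(A\cdot Du)_+A$ for a fixed rank-one direction $A$ (Lemma~\ref{lemma1dlocal}), which feeds into a Korn/Poincar\'e argument (Proposition~\ref{propkornpoincare}, Lemma~\ref{lemmapoincareoned}) producing the local one-dimensional profile. The good scales are precisely those where successive mollifications do not decrease the $BV$ norm much; the telescoping sum $\sum_h\bigl(|Du_{h+m}|-|Du_h|\bigr)\le m|Dv_k|$ is what makes ``most scales are good'' quantitative. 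Your summability-of-bad-scales heuristic is the right intuition, but the concrete bookkeeping goes through this iterated-mollification identity rather than an Alberti--Bellettini black box.
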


\begin{remark}\label{RemarkdefBVrelaxation}
The surface energy $\gamma_0^\rel$ is the $BV$-relaxation of $\gamma_0$ as
defined in (\ref{eqdefgamma0intro}) and is given by
\begin{alignat*}{1}\label{BV-rel}
\gamma_0^\rel(\nu,s)=\min\left\{\int_{\overline{Q}_\nu\cap J_v}\!\!\!\! \gamma_0(\nu_v,[v]) d\calH^1:\ v\in BV_{\rm loc}(\R^2;\Z^N) \ \hbox{and } v=u_\nu^s\ \hbox{in }Q^c_\nu\right\}
\end{alignat*}
where $Q_\nu$ is a unit square with two sides parallel to $\nu$ and $u_\nu^s=s\chi_{\{x\cdot \nu>0\}}$. 
The energy $E_0^\rel[u,\Omega]$ is then the lower-semicontinuous envelope of 
$$
\int_{J_u\cap\Omega} \gamma_0(\nu,[u]) d\calH^1
$$ 
with respect to the $L^1$ topology.
For more details about the relaxation of functionals defined on partitions we refer to \cite{AmbrosioBraides1990a,AmbrosioBraides1990b,AmbrosioFP}.
\end{remark}

The main ideas for the proof of Theorem~\ref{theorem2} are the following.

We first show that the kernel $\Gamma$ can be rewritten by a dyadic
superposition of truncated kernels and that given any function $u\in W^{1,1}$
and any given level of truncation, $u$ can be substituted by a $BV$ function
with values in $\Z^N$ whose truncated energy is controlled by the energy of
$u$. 
Then we show that one dimensional functions with values in $\Z^N$ are good
test functions for computing the truncated energy, which in turn can be
expressed in term of the line tension $\gamma_0$. 
In general functions with controlled energy do not satisfy the property of
being one dimensional, but we can show that this is almost true locally if
their total variation does not change much after mollification. We show this
last property for a sequence of mollifications on suitably well separated
scales of our initial sequence. The key idea is that a sequence with
controlled energy can oscillate at many scales, but not at all scales. 
To illustrate this strategy we first apply it to the one dimensional case in Section~\ref{onedimensional}.

In Section \ref{secpreliminary} we recall some elementary results for nonlocal
terms with integrable kernel. In Section \ref{layering} we decompose the
singular kernel into a sequence of integrable kernels and show that the
sequence $u_k$ in the $\liminf$ can be essentially replaced by a sequence $v_k$ with
values in $\Z^n$ which is uniformly bounded in $BV$. In Section \ref{1D} we
restrict ourselves to one-dimensional functions of the form $w_k(x)=f(x\cdot
\nu)$ and show that for those the limit energy can be computed explicitly. Our
general philosophy is that on most scales the given function $v_k$ is close
to a one-dimensional function. Thus in Section \ref{local1D} we carefully
estimate the energy of almost one-dimensional functions. In Sections
\ref{control} and \ref{iterativemoll} we combine those estimates with the idea
that on most length scales a $BV$ function is locally close to a one-dimensional
function. To quantify the distance of the $BV$ function from a locally
one-dimensional function on a given length scale we use an iterative
mollification on the different length scales, starting from the smallest one,
and measure the defect in the total variation of the gradient (see Section \ref{iterativemoll}
for the details).

\section{Elementary estimates on the nonlocal term}\label{secpreliminary}

\begin{lemma}\label{lemmaeasy}
Given $\Gamma'\in L^1(\R^2;\R^{N\times N}_+)$ and 
$u\in L^2(\Omega;\R^N)$ we define
\begin{equation}\label{eqdefp}
p_{\Gamma',\Omega}(u) = 
\sum_{i,j=1}^N \int_{\Omega\times\Omega} \Gamma_{ij}'(x-y)
\left[ u_i(x)-u_i(y)\right] \left[ u_j(x)-u_j(y)\right] dxdy\,.
\end{equation}
Then:
\begin{enumerate}
\item 
One has
\begin{equation}
0\le p_{\Gamma',\Omega}(u) \le 4
\|\Gamma'\|_{L^1(\R^2;\R^{N\times N})}\|u\|_{L^2(\Omega;\R^{N})}^2\,. 
\end{equation}
\item\label{lemmaeasy2}
The function $p^{1/2}_{\Gamma',\Omega}(\cdot)$ is a seminorm, and in
particular 
\begin{equation}
p_{\Gamma',\Omega}(u')\le (1+\eta) p_{\Gamma',\Omega}(u) +
(1+\frac{1}{\eta}) p_{\Gamma',\Omega}(u-u')
\end{equation}
for all $\eta>0$, $u$, $u'\in L^2(\Omega;\R^N)$. 
\item The function $p$ is set-superadditive, in the sense that for any pair $A$,
$B\subset 
\R^2$, with $A\cap B=\emptyset$, one has
\begin{equation}
p_{\Gamma',A}(u)+ p_{\Gamma',B}(u)\le p_{\Gamma',A\cup B}(u)\,.
\end{equation}
\end{enumerate}
\end{lemma}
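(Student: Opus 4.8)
The statement to prove is Lemma~\ref{lemmaeasy}, which collects three elementary properties of the bilinear form $p_{\Gamma',\Omega}$ associated to an $L^1$ kernel.

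\textbf{Overall approach.} All three parts follow from recognizing $p_{\Gamma',\Omega}(u)$ as a nonnegative quadratic form, namely the square of a seminorm. The cleanest route is to first establish nonnegativity (which is essentially the positive-definiteness hypothesis on $\hat\Gamma$, here encoded in $\Gamma'$ taking values in $\R^{N\times N}_+$), then deduce the seminorm property and the triangle-type inequality, and finally observe that superadditivity in the domain is immediate because the double integral over $(A\cup B)\times(A\cup B)$ contains the integrals over $A\times A$ and $B\times B$ as (disjoint) pieces of a larger nonnegative integrand.

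\textbf{Step-by-step plan.} First I would prove (i). The lower bound $p_{\Gamma',\Omega}(u)\ge 0$ follows pointwise: for fixed $x,y$ the integrand is $[u(x)-u(y)]^T\Gamma'(x-y)[u(x)-u(y)]\ge 0$ since $\Gamma'(x-y)$ is symmetric positive semidefinite (it lies in $\R^{N\times N}_+$), so the integral of a nonnegative function is nonnegative. For the upper bound, use $|[u(x)-u(y)]^T\Gamma'(x-y)[u(x)-u(y)]|\le |\Gamma'(x-y)|\,|u(x)-u(y)|^2\le |\Gamma'(x-y)|\,(|u(x)|+|u(y)|)^2\le 2|\Gamma'(x-y)|(|u(x)|^2+|u(y)|^2)$, then integrate: by Tonelli and a change of variables the $x$-translation integral of $|\Gamma'(x-y)|$ over all shifts is bounded by $\|\Gamma'\|_{L^1}$, giving $p_{\Gamma',\Omega}(u)\le 2\cdot 2\|\Gamma'\|_{L^1}\|u\|_{L^2}^2=4\|\Gamma'\|_{L^1}\|u\|_{L^2}^2$. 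For (ii), note that $p_{\Gamma',\Omega}(u)=\langle u,u\rangle_{\Gamma'}$ where $\langle u,v\rangle_{\Gamma'}=\sum_{ij}\int_{\Omega\times\Omega}\Gamma'_{ij}(x-y)[u_i(x)-u_i(y)][v_j(x)-v_j(y)]\,dxdy$ is a symmetric bilinear form, positive semidefinite by the argument in (i) applied after polarization (diagonalize the pointwise matrix $\Gamma'(x-y)=R^T D R$ with $D\ge 0$ to write the integrand as a sum of squares). Hence $p^{1/2}_{\Gamma',\Omega}$ is a seminorm by Cauchy--Schwarz, and the stated inequality is then the standard consequence: $p_{\Gamma',\Omega}(u')=p_{\Gamma',\Omega}(u+(u'-u))\le (p_{\Gamma',\Omega}^{1/2}(u)+p_{\Gamma',\Omega}^{1/2}(u'-u))^2$, and expanding and using Young's inequality $2ab\le \eta a^2+\eta^{-1}b^2$ on the cross term yields the bound with factors $(1+\eta)$ and $(1+1/\eta)$. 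For (iii), since $A\cap B=\emptyset$ we have $(A\times A)\cup(B\times B)\subset (A\cup B)\times(A\cup B)$ with the two sets on the left disjoint, and the integrand is nonnegative by (i), so $p_{\Gamma',A}(u)+p_{\Gamma',B}(u)=\int_{(A\times A)\cup(B\times B)}(\cdots)\le \int_{(A\cup B)\times(A\cup B)}(\cdots)=p_{\Gamma',A\cup B}(u)$.

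\textbf{Main obstacle.} There is essentially no serious obstacle; this is a warm-up lemma. The only point requiring a modicum of care is the pointwise sum-of-squares representation of the integrand needed to justify that $\langle\cdot,\cdot\rangle_{\Gamma'}$ is positive semidefinite (so that Cauchy--Schwarz applies in (ii)): one should note that the spectral decomposition $\Gamma'(z)=\sum_k \lambda_k(z)\, e_k(z)\otimes e_k(z)$ with $\lambda_k(z)\ge 0$ may be chosen measurably in $z$, so that $\langle u,u\rangle_{\Gamma'}=\sum_k\int_{\Omega\times\Omega}\lambda_k(x-y)\big(e_k(x-y)\cdot(u(x)-u(y))\big)^2\,dxdy$ is manifestly a sum of nonnegative quadratic forms; Cauchy--Schwarz for each summand and summing then gives the seminorm property. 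Alternatively one avoids measurable selection entirely by simply verifying $\langle u+tv,u+tv\rangle_{\Gamma'}\ge 0$ for all real $t$ from the pointwise positive semidefiniteness and concluding the discriminant inequality $\langle u,v\rangle_{\Gamma'}^2\le \langle u,u\rangle_{\Gamma'}\langle v,v\rangle_{\Gamma'}$ directly.
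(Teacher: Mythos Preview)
Your proposal is correct and follows essentially the same approach as the paper: pointwise nonnegativity of the integrand from $\Gamma'(z)\in\R^{N\times N}_+$, the seminorm property from $p$ being a positive semidefinite quadratic form (with the displayed inequality via triangle plus Young), and set-superadditivity from the decomposition of $(A\cup B)\times(A\cup B)$. The only minor difference is the upper bound in (i): the paper routes it through the convolution estimate $\|(\Gamma'\ast u)u\|_{L^1}\le\|\Gamma'\|_{L^1}\|u\|_{L^2}^2$, while you use the direct pointwise bound $|\xi^T\Gamma'(z)\xi|\le|\Gamma'(z)|\,|\xi|^2$ followed by $|u(x)-u(y)|^2\le 2(|u(x)|^2+|u(y)|^2)$; both give the same constant $4$ and are equally elementary.
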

In the following we write $\|\cdot \|_{L^p(\Omega)}$ or simply 
 $\|\cdot \|_{L^p}$ for $\|\cdot \|_{L^p(\Omega;\R^N)}$ or
$\|\cdot \|_{L^p(\Omega;\R^{N\times N})}$, when no ambiguity arises.
\begin{proof}
The upper bound follows from
\begin{equation*}
\| (\Gamma'\ast u) u\|_{L^1} \le \| \Gamma'\ast u\|_{L^2} \|u\|_{L^2} 
\le \|\Gamma'\|_{L^1} \|u\|_{L^2}^2\,.
\end{equation*}
The lower bound follows from the fact that $\Gamma'(z)$ is a positive
definite matrix. 

Since $p$ is a positive semidefinite, continuous quadratic form, its square
root is a seminorm.

Finally, observe that $(A\cup B)\times(A\cup B)= (A\times A) \cup (B\times B)
\cup (A\times B) \cup (B\times A)$, hence we only have to show that the
contributions of the last
two terms are nonnegative. Let $\xi^{xy}=u(x)-u(y)$. Then
\begin{alignat*}{1}
\sum_{ij}& \int_{A\times B} \Gamma_{ij}(x-y)
\left[ u_i(x)-u_i(y)\right] \left[ u_j(x)-u_j(y)\right] dxdy\\
&=
\sum_{ij} \int_{A\times B} \Gamma_{ij}(x-y)
\xi^{xy}_i \xi^{xy}_j \ge 0\,,
\end{alignat*}
since $\Gamma(x-y)\in \R^{N\times N}_+$ for any $x$, $y$. Since we may
exchange $A$ and $B$, this concludes the proof.
\end{proof}

\section{Dyadic decomposition and compactness}\label{layering}

In this section we show that we can represent the singular kernel with a
superposition of truncated kernels for which the regular phase field $u$ can be
substituted with a $BV$ function. 

Let $\phi(x)=x^{-3}$, $\phi:(0,\infty)\to\R$. We consider the following dyadic decomposition
of $\phi$. For $k\in\N$ set
\begin{equation*}
\phi_k(x)=
\begin{cases}
2^{3(k+1)}-2^{3k} & \text{ if } 0<x \le 2^{-k-1}\\
x^{-3}-2^{3k} & \text { if } 2^{-k-1}<x \le 2^{-k}\\
0 & \text{ if } x>2^{-k}\,.
\end{cases}
\end{equation*}
Further, we set $\phi_{-1}(x)=1$ for $x<1$, and $\phi_{-1}(x)=x^{-3}$
otherwise. A 
simple check shows that $\phi=\sum_{k=-1}^\infty \phi_k$, and each
$\phi_k$ is continuous, nonnegative, and for $k\ge0$ the function $\phi_k$ is
supported on $\overline B_{2^{-k}}$. 
We denote by $\Gamma_k(z)=\phi_k(|z|)\hat\Gamma(z/|z|)$ the ``layer'' kernel,
and by $\Gamma_{0,k}=\sum_{i=0}^k \Gamma_i$ the truncated kernel (we
shall not need to use the function $\phi_{-1}$ explicitly). 
For later reference we remark that
\begin{equation}
\|\Gamma_k\|_{L^1(\R^2)} = c 2^k\,,\hskip1cm \text{ for all }k\in\N\,.
\end{equation}
We shall replace a function with good energy by a $BV$ function which takes
integer values and which has good truncated energy. The truncated energy is
defined by 
\begin{equation*}
E_\e^k[v,\Omega]=\frac{1}{\ln 1/\e} p_{\Gamma_{0,k},\Omega}(v)\,.
\end{equation*}

\begin{proposition}\label{propmakeubv}
Assume that $\hat\Gamma$ is strictly positive definite, i.e., that there is $c>0$
such that 
\begin{equation}\label{eqgammaposdef}
\xi\cdot \hat\Gamma(z)\xi\ge c|\xi|^2 \hskip5mm
\text{ for all } \xi\in \R^N, \, z\in S^1\,. 
\end{equation}
Let $\Omega\subset\R^2$ be a bounded Lipschitz domain, and
$\omega\subset\subset\Omega$, $\delta\in(0,1/2)$. 
Then for every  sufficiently small $\e>0$ (on
a scale set by $\delta$ and $\dist(\omega,\partial\Omega)$) and every
 $u\in L^2(\Omega;\R^N)$ 
there are $k\in\N$ and 
$v\in BV(\omega;\Z^N)$ such that
\begin{equation*}
E_\e^k[v,\omega]\le E_\e[u,\Omega]\left( 1 + \frac{C}{\delta (\ln
1/\e)^{1/2}}\right)\,, 
\end{equation*}
\begin{equation}\label{eqpropmakeubvbvnorm}
|Dv|(\omega)\le \frac{C}{\delta} E_\e[u,\Omega]\,,
\end{equation}
and 
\begin{equation*}
\e^{1-\delta/2}\le 2^{-k} \le \e^{1-\delta}\,.
\end{equation*}
Furthermore,
\begin{equation*}
\|u-v\|_{L^1(\omega;\R^N)} \le C 2^{-k/2} ( E_\e[u,\Omega])^{1/2}\,.
\end{equation*}
\end{proposition}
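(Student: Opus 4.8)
The plan is to produce $v$ in two moves: first replace $u$ by a smoothed, truncated-energy competitor on a slightly larger set, then round that competitor to $\Z^N$-values while controlling the loss via the coarea formula, and finally choose the truncation level $k$ by an averaging (pigeonhole) argument so that the bookkeeping constants are small. First I would fix a scale $k$ in the admissible window $\e^{1-\delta/2}\le 2^{-k}\le\e^{1-\delta}$ and mollify $u$ at scale comparable to $2^{-k}$, working on an intermediate set $\omega\subset\subset\omega'\subset\subset\Omega$; the point of the window is that $2^{-k}\gg\e$, so that the truncated kernel $\Gamma_{0,k}$ only couples points at distance $\lesssim 2^{-k}$, while the potential term at scale $\e$ still forces the mollified function to be close to $\Z^N$ in $L^2$ on $\omega$. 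Using Lemma \ref{lemmaeasy}(\ref{lemmaeasy2}) with a parameter $\eta$ to be optimized, $p_{\Gamma_{0,k},\omega'}(u_{\text{moll}})\le (1+\eta)p_{\Gamma_{0,k},\omega'}(u)+(1+1/\eta)p_{\Gamma_{0,k},\omega'}(u-u_{\text{moll}})$, and the last term is bounded by $4\|\Gamma_{0,k}\|_{L^1}\|u-u_{\text{moll}}\|_{L^2}^2\le C2^k\|u-u_{\text{moll}}\|_{L^2}^2$; the mollification error in turn is controlled by the full $H^{1/2}$-type seminorm $p_{\Gamma,\Omega}(u)$ (hence by $\ln(1/\e)E_\e[u,\Omega]$) times a small factor coming from the smoothing scale — this is where the factor $(\ln1/\e)^{-1/2}$ ultimately appears after choosing $\eta\sim(\ln1/\e)^{-1/2}$.

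Next I would round. Since $u_{\text{moll}}$ is Lipschitz and $L^2$-close to $\Z^N$ on $\omega$, for a.e.\ choice of a "grid shift" we can define $v$ componentwise as the nearest integer, and by the coarea formula $|Dv|(\omega)$ is controlled by the integral over a range of level sets of $|\nabla u_{\text{moll}}|$, which is bounded by $C\|\nabla u_{\text{moll}}\|_{L^1}$; the gradient in turn is estimated, again via the kernel, by $2^k\|\,\cdot\,\|$-type quantities, giving $|Dv|(\omega)\le (C/\delta)E_\e[u,\Omega]$ — the $1/\delta$ tracking the width of the scale window. For the energy one uses that $p^{1/2}$ is a seminorm once more: $p_{\Gamma_{0,k},\omega}(v)^{1/2}\le p_{\Gamma_{0,k},\omega}(u_{\text{moll}})^{1/2}+p_{\Gamma_{0,k},\omega}(v-u_{\text{moll}})^{1/2}$, and $p_{\Gamma_{0,k},\omega}(v-u_{\text{moll}})\le C2^k\|v-u_{\text{moll}}\|_{L^2}^2$, where $\|v-u_{\text{moll}}\|_{L^2}^2\le \|v-u_{\text{moll}}\|_{L^\infty}\|v-u_{\text{moll}}\|_{L^1}$ is small because the rounding error is pointwise $O(1)$ but, integrated, controlled by $\e\cdot\frac1\e\int\dist^2(u,\Z^N)$ plus the mollification error. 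Assembling these and expanding the square gives the multiplicative form $E_\e^k[v,\omega]\le E_\e[u,\Omega](1+C/(\delta(\ln1/\e)^{1/2}))$.

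The one genuinely delicate step — and the one I expect to be the main obstacle — is getting the $2^k$ losses from $\|\Gamma_{0,k}\|_{L^1}=O(2^k)$ to be beaten by the smallness of the mollification/rounding errors \emph{uniformly in the window}, i.e.\ showing that $2^k\|u-u_{\text{moll}}\|_{L^2}^2$ and $2^k\|v-u_{\text{moll}}\|_{L^2}^2$ are both $\lesssim (\ln1/\e)^{1/2}$ times $E_\e[u,\Omega]$ for \emph{some} admissible $k$. This is where one cannot afford to fix $k$ in advance: one integrates the relevant error quantity over the dyadic values of $k$ in the window $[\,\log_2(1/\e^{1-\delta}),\log_2(1/\e^{1-\delta/2})\,]$, whose length is $\sim(\delta/2)\log_2(1/\e)$, against the measure $dk$, uses that $\sum_k p_{\Gamma_k}\le p_\Gamma$ (the dyadic decomposition is a genuine partition of the kernel, so the scales' contributions add up to at most the total $H^{1/2}$ energy, hence to $\lesssim\ln(1/\e)E_\e[u,\Omega]$), and concludes by pigeonhole that at least one $k$ does the job with the stated constants. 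The final bound $\|u-v\|_{L^1(\omega)}\le C2^{-k/2}(E_\e[u,\Omega])^{1/2}$ then falls out of the same error estimates together with Cauchy--Schwarz on $\omega$, since $\|u-v\|_{L^2}^2\lesssim 2^{-k}E_\e[u,\Omega]$ for the selected $k$ and $|\omega|$ is finite.
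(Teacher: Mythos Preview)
Your overall architecture is right: pigeonhole over the window $\e^{1-\delta/2}\le 2^{-k}\le\e^{1-\delta}$ to find a good scale, replace $u$ by a $\Z^N$-valued function close in $L^2$, transfer the truncated energy via Lemma~\ref{lemmaeasy}\ref{lemmaeasy2}, and set $\eta=(\ln 1/\e)^{-1/2}$. But the step producing the $BV$ bound contains a genuine gap. You claim that the coarea formula gives $|Dv|(\omega)\le C\|\nabla u_{\mathrm{moll}}\|_{L^1}$ and that the latter is ``estimated via the kernel by $2^k\|\cdot\|$-type quantities'' to yield $(C/\delta)E_\e$. The kernel gives only an $L^2$ gradient bound, namely $\|\nabla u_{\mathrm{moll}}\|_{L^2}^2\le C\,2^k p_{\Gamma_{k-1},\Omega}(u)$, and hence $\|\nabla u_{\mathrm{moll}}\|_{L^1}\le C\,2^{k/2}(p_{\Gamma_{k-1}})^{1/2}\sim \e^{-(1-\delta)/2}$, which diverges; there is no $L^1$ gradient bound of order $E_\e$ available for a mollification of a generic $u\in L^2$, because the $H^{1/2}$ seminorm does not control $W^{1,1}$. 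Your route can be repaired, but only by \emph{not} bounding by the full $\|\nabla u_{\mathrm{moll}}\|_{L^1}$: one must restrict the coarea integral to the set $\{\dist(u_{\mathrm{moll}},\Z^N)>1/4\}$, whose measure is $O(2^{-k}E_\e/\delta)$ by the potential and mollification-error bounds, and then Cauchy--Schwarz against $\|\nabla u_{\mathrm{moll}}\|_{L^2}$. This also forces the pigeonhole to handle two quantities simultaneously ($p_{\Gamma_k}$ and $2^k\|u-u_{\mathrm{moll}}\|_{L^2}^2$), which you do not address.

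The paper takes a different and more direct route that avoids mollification altogether. It covers $\omega$ by a grid of squares $q_z$ of side $\sim 2^{-k}$, with overlapping enlargements $Q_z$, and uses that $\Gamma_k(x-y)\ge c\,2^{3k}$ for $x,y\in Q_z$ to obtain a Poincar\'e-type estimate $\|u-\bar u\|_{L^2(Q_z)}^2\le c\,2^{-k}p_{\Gamma_k,Q_z}(u)$ directly from the nonlocal term; combined with the potential this produces a constant $v_z\in\Z^N$ with $\|u-v_z\|_{L^2(Q_z)}^2\le c\,2^{-k}p_{\Gamma_k,Q_z}(u)+c\int_{Q_z}\dist^2(u,\Z^N)$. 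The $BV$ bound then exploits integer quantization in a way your scheme does not: since $v_z,v_{z'}\in\Z^N$, one has $|v_z-v_{z'}|\le|v_z-v_{z'}|^2$, so the jump across a shared face is dominated by $2^k\calL^2(q_z)|v_z-v_{z'}|^2\le C\,2^k(\|u-v_z\|_{L^2(Q_z)}^2+\|u-v_{z'}\|_{L^2(Q_{z'})}^2)$. Summing and using the pigeonhole choice $p_{\Gamma_k,\Omega}(u)\le(C/\delta)E_\e$ together with $2^k\e\ln(1/\e)\to 0$ gives $|Dv|(\omega)\le(C/\delta)E_\e$ with no gradient of a mollification ever appearing.
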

\begin{proof}
We consider, for $k\in\N$, the quantities
\begin{equation*}
p_{\Gamma_k, \Omega}(u)\,.
\end{equation*}
Clearly $E_\e[u,\Omega]\ge \frac{1}{\ln1/\e} \sum_{k=0}^\infty
p_{\Gamma_k, \Omega}(u)$. We can assume without loss of generality that
 $\eps$ is sufficiently small that the number of $k\in \N$
such that $\e^{1-\delta/2}\le 2^{-k} \le \e^{1-\delta}$
is at least $\delta \ln (1/\eps)/(4 \ln 2)$. 
Therefore there is one $k$ such that
\begin{equation}\label{eqchoicek}
\e^{1-\delta/2}\le 2^{-k} \le \e^{1-\delta} \text { and } p_{\Gamma_k, \Omega}(u) \le
\frac{C}{\delta}
E_\e[u,\Omega]\,. 
\end{equation}
Let $\alpha=2^{-k-4}$, and for $z\in \alpha \Z^2$ consider 
$q_z=z+(0,\alpha)^2$ and $Q_z=z+(-\alpha, 2\alpha)^2$.
Let $ Z=\{z\in\alpha\Z^2: Q_z\subset\Omega\}$. 
We observe that $\omega$ is covered by the disjoint union of the
small squares $\{q_z\}_{z\in Z}$ (up to a null set) and that the large
squares $Q_z$ have finite overlap and are contained in $\Omega$.

We claim that for any $z\in Z$ there is $v_z\in \Z^N$ such that
\begin{equation}\label{eqreplaceubyconstant}
\int_{Q_z} |u-v_z|^2 dx \le c \int_{Q_z} \dist^2( u,\Z^N)\, dx
+c 2^{-k} p_{\Gamma_k, Q_z}(u)\,,
\end{equation}
for some constant $c$ depending only on $\hat\Gamma$ and $N$.

Since $\diam(Q_z)< 2^{-k-1}$, for any $x,y\in Q_z$ we have
$\phi_k(x-y)= 2^{3(k+1)}-2^{3k}$. Recalling (\ref{eqgammaposdef}) we obtain
\begin{equation}\label{eqpgammakQz}
p_{\Gamma_k, Q_z}(u) \ge c 2^{3k} \int_{Q_z}\int_{Q_z} |u(x)-u(y)|^2 dx dy 
\ge c 2^k \int_{Q_z} |u(x)-\bar u|^2 dx
\end{equation}
where $\bar u$ is the average of $u$ over $Q_z$.
Fix $w:Q_z\to \Z^N$ measurable and such that $\dist(u,\Z^N)=|u-w|$, and let
$\bar w$ be its average. We estimate
\begin{alignat*}1
\int_{Q_z} |w-\bar w|^2 dx \le& 3
\int_{Q_z} |w- u|^2 dx +3\int_{Q_z} |u- \bar u|^2 dx +3 \int_{Q_z}|\bar u- \bar w|^2 dx \,.
\end{alignat*}
The last term is controlled by the first term in the right-hand side, which in
turn is controlled by 
the integral of the squared distance of $u$ from $\Z^N$. The second term in the right-hand side is controlled
by (\ref{eqpgammakQz}). Therefore
\begin{alignat*}{1}
\int_{Q_z} |w-\bar w|^2 dx \le& 6 \int_{Q_z} \dist^2(u,\Z^N) \, dx +
c 2^{-k} p_{\Gamma_k, Q_z}(u)\,.
\end{alignat*}
Recalling that $w\in \Z^N$, we get 
\begin{alignat*}1
\calL^2(Q_z)\dist^2(\bar w, \Z^N)\le 
\int_{Q_z} |w-\bar w|^2 dx \le& 
6 \int_{Q_z} \dist^2( u,\Z^N)
+c 2^{-k} p_{\Gamma_k, Q_z}(u)\,.
\end{alignat*}
We pick $v_z\in \Z^N$ such that
$|\bar w-v_z|= \dist(\bar w, \Z^N)$, and obtain
\begin{alignat*}1
\int_{Q_z} |u-v_z|^2 dx 
\le& 3
\int_{Q_z} |u-\bar u|^2 dx + 3
\int_{Q_z} |\bar u-\bar w|^2 dx +3
\int_{Q_z} |\bar w-v_z|^2 dx\,.
\end{alignat*}
Collecting the previous estimates proves (\ref{eqreplaceubyconstant}).

Repeating the same procedure for all squares
we obtain a function $v\in L^\infty(\omega;\Z^N)$, defined by $v=v_z$ on
$q_z$, such that 
\begin{equation}\label{equvl2}
\|u-v\|_{L^2(\omega)}^2 \le\sum_{z\in Z}\|u-v\|_{L^2(Q_z)}^2 \le c 2^{-k} p_{\Gamma_k, \Omega}(u)
+c\int_{\Omega} \dist^2(u,\Z^N)dx\,.
\end{equation}
Here we used the superadditivity of $p_{\Gamma_k,\Omega}$
and the fact that the $Q_z$ have finite overlap.

We now turn to the estimate of the measure $|Dv|$. This is obviously
concentrated on 
the union of the boundaries of the squares. 
Consider two neighbouring squares $q_z$ and $q_{z'}$
(so that they share an edge, i.e., $z\ne z'$ and $\calH^1(\partial
q_z\cap \partial q_{z'})>0$). 
Then $q_z$ is contained in both $Q_z$ and $Q_{z'}$, and analogously
$q_{z'}$. The key idea is that 
if $u$ is approximately constant on each of the larger cubes, then the jump
must be zero (approximately constant on one of the large cubes does not suffice, with the
present definition of $v_z$ -- consider, for example, $u=0$ on $Q_z$ and
$u=100$ on $\R^2\setminus Q_z$). 
Precisely, 
\begin{alignat*}1
\calL^2(q_z) |v_z-v_{z'}|^2 \le& 2 \int_{q_z} |u-v_z|^2 + |u-v_{z'}|^2 dx\\
\le& 2 \int_{Q_z} |u-v_z|^2 dx + 2\int_{Q_{z'}}|u-v_{z'}|^2 dx\\
\le & c \int_{Q_z \cup Q_{z'}} \dist^2( u,\Z^N)\, dx
+c 2^{-k} p_{\Gamma_k, Q_z\cup Q_{z'}}(u)\,,
\end{alignat*}
where in the last step we used (\ref{eqreplaceubyconstant}).

Recalling that $v$ is integer-valued we obtain, for the same squares,
\begin{alignat*}1
|Dv|(\partial q_z\cap \partial q_{z'})&=
2^{-k}|v_z-v_{z'}|
\le 2^k \calL^2(q_z) |v_z-v_{z'}|^2\\
&\le c 2^{k} \int_{Q_z \cup Q_{z'}} \dist^2( u,\Z^N)\, dx
+c p_{\Gamma_k, Q_z\cup Q_{z'}}(u)\,.
\end{alignat*}
Summing over all squares gives
\begin{equation*}
|Dv|(\omega) \le c p_{\Gamma_k, \Omega}(u) +
c 2^k \int_{\Omega} \dist^2(u,\Z^N)dx\,.
\end{equation*}


>From (\ref{eqchoicek}) we obtain, for sufficiently small $\e$, 
\begin{equation}
\label{eq:comparekepslogeps}
2^k\le \frac{1}{\e^{1-(\delta/2)}}\le \frac{1}{\e \ln 1/\e}\,,
\end{equation}
and  therefore
\begin{equation}\label{eqestimatefinalDv}
|Dv|(\omega) \le \frac{C}{\delta} E_\e[u]\,.
\end{equation}
This concludes the proof of (\ref{eqpropmakeubvbvnorm}).

We compute, recalling (\ref{equvl2}) and Lemma
\ref{lemmaeasy}, for all $\eta\in(0,1/2)$
\begin{alignat*}{1}
p_{\Gamma_{0,k},\omega}(v) &\le (1+\eta)
p_{\Gamma_{0,k},\omega}(u)+ \left(1+\frac1\eta\right) p_{\Gamma_{0,k},\omega}(u-v)\\
&\le (1+\eta)
p_{\Gamma_{0,k},\omega}(u)+ \frac2\eta \|\Gamma_{0,k}\|_{L^1(\R^2)}
\|u-v\|_{L^2(\omega)}^2\,.
\end{alignat*}
Since $ \|\Gamma_{0,k}\|_{L^1(\R^2)}\le c2^k$, using (\ref{equvl2}) and
arguing as done for (\ref{eqestimatefinalDv}) we obtain
\begin{alignat*}{1}
p_{\Gamma_{0,k},\omega}(v) 
&\le (1+\eta)
p_{\Gamma_{0,k},\omega}(u)+ \frac1\eta c 2^k 2^{-k} p_{\Gamma_k,
\Omega}(u)
+ \frac1\eta c 2^k \int_\Omega\dist^2(u,\Z^N)\, dx
\\
&\le (1+\eta) (\ln \frac1\eps)
 E_\e[u,\Omega]
+ \frac1\eta \frac{c}{\delta} E_\e[u,\Omega]\,. 
\end{alignat*}
Finally, 
\begin{equation*}
E_\e^k[v,\omega]=\frac{1}{\ln 1/\e} p_{\Gamma_{0,k},\omega}(v)
\le 
(1+\eta) E_\e[u,\Omega] + \frac{c}{\delta\eta\ln 1/\e} E_\e[u,\Omega]\,.
\end{equation*}
Taking $\eta=(\ln 1/\e)^{-1/2}$ gives
\begin{equation*}
E_\e^k[v,\omega] \le 
E_\e[u,\Omega] + \frac{c}{\delta(\ln 1/\e)^{1/2}} E_\e[u,\Omega]\,.
\end{equation*} 
Finally, from (\ref{equvl2}) we have
\begin{equation*}
\|u-v\|_{L^1(\omega)} \le c 2^{-k/2} \left[p_{\Gamma_k, 
\Omega}(u) +c2^k\int_{\Omega} \dist^2(u,\Z^N)dx\right]^{1/2}\,.
\end{equation*}
Recalling (\ref{eq:comparekepslogeps}) we conclude
\begin{equation*}
\|u-v\|_{L^1(\omega)} \le c 2^{-k/2} \left(E_\e[u,\Omega]\right)^{1/2}\,.
\end{equation*}
\end{proof}

As consequence any given sequence converging in $L^1$ to a function in
$BV(\Omega;\Z^N)$ can be substituted with a sequence in $BV(\Omega;\Z^N)$ for
which we control the energy. More precisely we have the following
proposition. 

\begin{proposition}\label{eqseqBV}
Let $\Omega\subset\R^2$ be a bounded Lipschitz domain, and assume $u_0\in
BV(\Omega;\Z^N)$. Then for any $\delta\in(0,1/2)$, any sequences $\e_i\to0$,
$u_i\to u_0$ in $L^1(\Omega;\R^N)$ and any Lipschitz domain
$\omega\subset\subset\Omega$ there is a sequence $v_k\in BV(\omega;\Z^N)$
such that $v_k\to u_0$ in $L^1(\omega;\R^N)$,
\begin{equation*}
\liminf_{k\to\infty} \frac{1}{k} \sum_{h=0}^k
p_{\Gamma_h,\omega}(v_k) \le (1+2\delta) \ln 2 \liminf_{i\to\infty}
E_{\e_i}[u_i,\Omega] \,,
\end{equation*}
and
\begin{equation*}
|Dv_k|(\omega)\le C_\delta (\liminf_{i\to\infty}
E_{\e_i}[u_i,\Omega] +1) \,.
\end{equation*}
\end{proposition}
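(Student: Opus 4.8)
The plan is to derive Proposition~\ref{eqseqBV} by applying Proposition~\ref{propmakeubv} to each $u_i$ on a sequence of truncation levels that is forced to exhaust the integers as $i\to\infty$, and then diagonalizing. First I would pass to a subsequence of the $u_i$ along which $E_{\e_i}[u_i,\Omega]$ converges to its $\liminf$; call the limit $L$, which we may assume finite (otherwise there is nothing to prove). For each $i$, Proposition~\ref{propmakeubv} with the given $\delta$ and $\omega$ produces (for $\e_i$ small, i.e.\ $i$ large) an index $k_i\in\N$ and a function $v^{(i)}\in BV(\omega;\Z^N)$ with
\begin{equation*}
E_{\e_i}^{k_i}[v^{(i)},\omega]\le E_{\e_i}[u_i,\Omega]\Bigl(1+\tfrac{C}{\delta(\ln1/\e_i)^{1/2}}\Bigr),\qquad
|Dv^{(i)}|(\omega)\le \tfrac{C}{\delta}E_{\e_i}[u_i,\Omega],
\end{equation*}
together with $\e_i^{1-\delta/2}\le 2^{-k_i}\le\e_i^{1-\delta}$ and $\|u_i-v^{(i)}\|_{L^1(\omega)}\le C2^{-k_i/2}(E_{\e_i}[u_i,\Omega])^{1/2}$. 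The scale bound gives $(1-\delta)\ln(1/\e_i)/\ln2\le k_i\le (1-\delta/2)\ln(1/\e_i)/\ln2$, so $k_i\to\infty$; relabel so that $v_k:=v^{(i)}$ whenever $k=k_i$ (the $k_i$ are eventually strictly increasing, or can be thinned to be so), and for the remaining indices $k$ fill in by repeating the previous chosen function, which does not affect the $\liminf$.

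Next I would convert the truncated-energy bound into the Cesàro form in the statement. By definition $E_{\e_i}^{k_i}[v^{(i)},\omega]=\tfrac{1}{\ln1/\e_i}p_{\Gamma_{0,k_i},\omega}(v^{(i)})=\tfrac{1}{\ln1/\e_i}\sum_{h=0}^{k_i}p_{\Gamma_h,\omega}(v^{(i)})$, using $\Gamma_{0,k}=\sum_{h=0}^k\Gamma_h$ and the additivity of $p$ in the kernel. Hence
\begin{equation*}
\frac{1}{k_i}\sum_{h=0}^{k_i}p_{\Gamma_h,\omega}(v^{(i)})
=\frac{\ln1/\e_i}{k_i}\,E_{\e_i}^{k_i}[v^{(i)},\omega]
\le \frac{\ln1/\e_i}{k_i}\Bigl(1+\tfrac{C}{\delta(\ln1/\e_i)^{1/2}}\Bigr)E_{\e_i}[u_i,\Omega].
\end{equation*}
From $2^{-k_i}\le\e_i^{1-\delta}$ we get $k_i\ge(1-\delta)\ln(1/\e_i)/\ln2$, so $\ln(1/\e_i)/k_i\le \ln2/(1-\delta)$, and $\ln2/(1-\delta)\le(1+2\delta)\ln2$ for $\delta\in(0,1/2)$ (since $1/(1-\delta)\le 1+2\delta$ there). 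The error factor tends to $1$, so taking $\liminf$ over $i$ (equivalently over $k$ along the chosen subsequence) yields $\liminf_k\tfrac1k\sum_{h=0}^k p_{\Gamma_h,\omega}(v_k)\le(1+2\delta)\ln2\cdot L$, which is the first claimed inequality. The $BV$ bound follows directly from $|Dv^{(i)}|(\omega)\le\tfrac{C}{\delta}E_{\e_i}[u_i,\Omega]$, since the right-hand side converges to $\tfrac{C}{\delta}L\le C_\delta(L+1)$, and for the finitely many small-$i$ indices one absorbs a constant.

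Finally, $L^1$-convergence $v_k\to u_0$ in $\omega$ follows by the triangle inequality: $\|v^{(i)}-u_0\|_{L^1(\omega)}\le\|v^{(i)}-u_i\|_{L^1(\omega)}+\|u_i-u_0\|_{L^1(\omega)}$, the second term vanishes by hypothesis, and the first is at most $C2^{-k_i/2}(E_{\e_i}[u_i,\Omega])^{1/2}\le C\e_i^{(1-\delta)/2}\,(L+1)^{1/2}\to0$ since $E_{\e_i}[u_i,\Omega]$ is bounded and $k_i\to\infty$. I do not anticipate a serious obstacle here — the substance is entirely in Proposition~\ref{propmakeubv}, which is already proved — but the one point requiring care is the \emph{pigeonhole}: Proposition~\ref{propmakeubv} only returns \emph{some} $k$ in the admissible window for each $i$, so the resulting indices $k_i$ need not be consecutive. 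One must therefore argue that since $k_i\to\infty$ one may pass to a subsequence on which they are strictly increasing and define $v_k$ on the intervening indices by the last available function (constant in $k$ over each gap), checking that this interpolation only decreases — or at worst leaves unchanged — the relevant $\liminf$, which is immediate because a $\liminf$ of a sequence equals the $\liminf$ over any cofinal subsequence together with repeated values.
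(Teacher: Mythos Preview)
Your proposal is correct and follows essentially the same route as the paper's proof: pass to a subsequence along which the energy converges, apply Proposition~\ref{propmakeubv} to each $u_i$, convert the factor $1/\ln(1/\e_i)$ into $1/k_i$ via the scale bound $2^{-k_i}\le\e_i^{1-\delta}$ (yielding the $(1+2\delta)\ln 2$), and then extend the sequence to all integer indices. The only cosmetic difference is that the paper fills in the missing indices by taking $w_K=v_{j,\delta}$ with $j=\min\{i:k_{i,\delta}\ge K\}$ (the \emph{next} available function) rather than repeating the previous one; either way, the bound on the $\liminf$ follows because the $\liminf$ of the full sequence is bounded above by the $\liminf$ along the subsequence $K=k_i$.
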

\begin{proof}
If the $\liminf$ equals $\infty$ there is nothing to prove.
By taking a subsequence we can assume that
\begin{equation*}
E_{\e_i}[u_i,\Omega] \le \lim_{i\to\infty} E_{\e_i}[u_i,\Omega] +1
\end{equation*}
for all $i$. We apply Proposition \ref{propmakeubv} to each $u_i$, and
obtain $k_{i,\delta}$ and $v_{i,\delta}$. The estimate on the total
variation is immediate.
From the condition on $k_{i,\delta}$ we obtain
\begin{equation*}
-k_{i,\delta}\ln 2 \le -(1-\delta) \ln \frac1{\e_i}
\end{equation*}
which implies
\begin{equation*}
\frac1{k_{i,\delta}} \le (1+2\delta) \frac{\ln 2}{\ln \frac1{\e_i}}\,.
\end{equation*}
Therefore 
\begin{equation*}
\frac{1}{k_{i,\delta}} p_{\Gamma_{0,k_{i,\delta}},\omega}(v_{i,\delta}) \le (1+2\delta)
\ln 2\, E_{\e_i}[u_i, \Omega] \left( 1 + \frac{C_\delta }{(\ln
1/\e_i)^{1/2}}\right)\,,
\end{equation*}
which gives
\begin{equation*}
\liminf_{i\to\infty} \frac{1}{k_{i,\delta}}
p_{\Gamma_{0,k_{i,\delta}},\omega}(v_{i,\delta}) \le (1+2\delta) \ln 2
\lim_{i\to\infty} E_{\e_i}[u_i, \Omega]\,.
\end{equation*}
Taking a further subsequence we can assume the map $i\mapsto k_{i,\delta}$ to
be nondecreasing. We set for every $K\in\N$
\begin{equation}
w_K = v_{j,\delta} \text{ where } j=\min\{i\in\N: k_{i,\delta}\ge K\}\,.
\end{equation}
Then $w_K\to u$ in $L^1$, $|Dw_K|(\omega)$ still obeys the desired bound, and
from the fact that $ \frac{1}{k_{i,\delta}} 
p_{\Gamma_{0,k_{i,\delta}},\omega}(v_{i,\delta})$ is a subsequence of $\frac{1}{K} 
p_{\Gamma_0,\omega}(w_K)$ we get
\begin{equation*}
\liminf_{K\to\infty} \frac{1}{K} \sum_{h=0}^K
p_{\Gamma_h,\omega}(w_K) \le
\liminf_{i\to\infty} \frac{1}{k_{i,\delta}}
p_{\Gamma_{0,k_{i,\delta}},\omega}(v_{i,\delta})
\end{equation*}
and hence the thesis follows.
\end{proof}

\subsection{Digression: the one-dimensional case without
  rearrangement} \label{onedimensional} 
We pause for a moment to illustrate how Proposition \ref{eqseqBV} can be used
to obtain the lower bound without the use of rearrangement in the
one-dimensional scalar case, i.e., for the functional
(\ref{eqfunctionaintrol}) with $n=1$, $\Omega=(-L,L)$. For simplicity we only
consider the two-well problem, i.e., we take
\begin{equation*}
F_\eps[u]=
\frac{1}{\ln(1/\eps)}\left[
 \int_{(-L,L)^2} \frac{ (u(x)-u(y))^2}{|x-y|^2}
dxdy + \frac{1}{\eps}\int_{-L}^L \dist^2(u(x),\{0,1\})dx \right]\,.
\end{equation*}
In this case the $\Gamma$-limit is $2\# \text{jumps}[u]$ for $u\in
BV(\Omega,\{0,1\})$, and $\infty$ otherwise. The upper bound is in this
situation immediate (it suffices to smooth the jumps on the scale $\eps$).

Assume $\eps_i\to0$, and $u_i$ to be a sequence such that
\begin{equation*}
E^*=\liminf_{i\to\infty}  F_{\eps_i}[u_i]
\end{equation*}
is finite. We may assume
that the functions $u_i$ take values in $[0,1]$, since projection of the
values to $[0,1]$ reduces the energy $F_\eps$.  Fix $\delta\in(0,1/2)$.
The construction of
Proposition 
\ref{eqseqBV} yields a sequence of characteristic functions $v_k$ such that
\begin{equation}
\label{eq:42neua}
\# \text{jumps} (v_k) \le C_\delta (E^*+1)\le C_\delta'
\end{equation}
and
\begin{equation}
\label{eq:42neub}
\liminf_{k\to\infty} \frac{1}{k} \sum_{h=0}^k p_{\Gamma_h,\omega} (v_k) \le
(1+2\delta) (\ln 2) E^* \,,
\end{equation}
where $\omega=(-L',L')$, with $0<L'<L$, 
\begin{equation*}
\Gamma_k(x)=
\begin{cases}
2^{2(k+1)}-2^{2k} & \text{ if } 0<|x| \le 2^{-k-1}\\
|x|^{-2}-2^{2k} & \text { if } 2^{-k-1}<|x|\le 2^{-k}\\
0 & \text{ if } |x|>2^{-k}\,,
\end{cases}
\end{equation*}
and $p$ is defined as in (\ref{eqdefp}). In particular (\ref{eq:42neua})
implies that the limit 
 $u_0$ is a characteristic function with finitely many jumps, and
one sees easily that it suffices to prove the lower bound for the case that
$u_0$ has a single jump, i.e., $u_0=\chi_{(0,L)}$. 

Suppose that $v_k$ has a jump at $\overline x$ and no other jump in
$I_h=(\overline x - 2^{-h}, \overline x + 2^{-h})\subset (-L',L')$, for some
$h\in\N$. A change of variables and an explicit integration give
\begin{alignat}1
p_{\Gamma_h,\omega}(v_k) &\ge 
p_{\Gamma_h,I_h}(v_k) \nonumber \\
& =2\int_{I_h^-}\int_{I_h^+} \Gamma_h(x-y) dx dy \nonumber \\
& =2\int_{-1}^0\int_0^1 \Gamma_0(x-y) dx dy=2\ln 2\,.
\label{eq:42neucc}
\end{alignat}
Thus, {\em if} at every scale $2^{-h}$ the function $v_k$ has a jump which is
isolated in the above sense we immediately conclude from (\ref{eq:42neub})
that
$E^*\ge 2/(1+2\delta)$ which gives the desired conclusion, since
$\delta>0$ was 
arbitrary. Now we cannot expüect that $v_k$ has an isolated jump at every
scale, but we will see that this is true at most scales, after a small
modification of $v_k$. 

To make this precise we use that the jump set $J=J_{v_k}$ contains only
finitely many points, with a bound independent of $v_k$. We now iteratively
cluster and remove points in $J$ as follows:
\begin{enumerate}
\item Set $J^{(k+1)} = J$ and $w^{(k+1)}=v_k$.
\item Given $J^{(h+1)}$ and $w^{(h+1)}$
define $J^{(h)}$ and $w^{(h)}$ as follows. An $\ell$-cluster is a maximal
sequence of points $x_1<x_2<\ldots <x_\ell$ in $J^{(h+1)}$ with
$x_{i+1}-x_i<2^{-h}$. Now we obtain $J^{(h)}$ by replacing each cluster with
odd $\ell$ by the leftmost point $x_1$ and each cluster with even $\ell$ by the
empty set. If $J^{(h+1)}=J^{(h)}$, set 
$w^{(h)}=w^{(h+1)}$. If $J^{(h+1)}\ne J^{(h)}$, let 
$w^{(h)}$ be the characteristic function which jumps at the points in
$J^{(h)}$ and agrees with $w^{(h+1)}$ outside the intervals $[x_1, x_\ell]$
defined by the $\ell$-clusters in $J^{(h+1)}$.
Thus
\begin{equation}
\| w^{(h)} - w^{(h+1)}\|_{L^1} =
\| w^{(h)} - w^{(h+1)}\|^2_{L^2} \le 
(\# J^{h+1}) 2^{-h} \le (\# J) 2^{-h}\,.
\end{equation}
\end{enumerate}
We say that a level $h$ is {\em critical} if $J^{(h)}\ne J^{(h+1)}$. Since
$J$ is finite we have
\begin{equation*}
\# \{h: h \text{ is critical}\} \le \# J\,.
\end{equation*}
If $h$ is not critical then all jumps of $w^{(h)}=w^{(h+1)}$ are $h$-isolated,
i.e., there is no other jump in a neighbourhood of size $2^{-h}$. Thus by 
(\ref{eq:42neucc}) 
\begin{equation}
\label{eq:42neuc}
p_{\Gamma_h, (-L', L')}(w^{(h)}) \ge 2\ln 2\,,
\end{equation}
if $h$ is not critical. 

We now would like to exploit that $w^{(h)}$ is very close to $v_k$ if $h$,
$h+1$, \dots, $h+m-1$ are not critical. Fix $m\in\N$. We say that $h$ is good
if $h$, $h+1$, \dots, $h+m-1$ are not critical. Thus
\begin{equation}
\label{eq:42neud}
\# \{h\in\{1, \dots, k\}: h \text{ good}\} \ge k - m \# J\,.
\end{equation}
At the same time, if $h$ is good $w^{(h)}=w^{(h+m)}$, and therefore
\begin{equation*}
\|w^{(h)} - v_k \|_{L^1} =
\|w^{(h+m)} - v_k \|_{L^1} \le 2 (\# J) 2^{-(h+m)}
\end{equation*}
(and the same for the squared $L^2$ norm, since we are dealing with
characteristic functions).
We compute, using Lemma \ref{lemmaeasy},
\begin{alignat}1
p_{\Gamma_h, \omega}(w^{(h)}) &= 
p_{\Gamma_h, \omega}(v_k+w^{(h)}-v_k) \nonumber \\
&\le 
(1+\eta) p_{\Gamma_h, \omega}(v_k)
+ (1+\frac1\eta) p_{\Gamma_h, \omega}(w^{(h)}-v_k)\nonumber \\
&\le 
(1+\eta) p_{\Gamma_h, \omega}(v_k)
+ (1+\frac1\eta) 2^h \|w^{(h)}-v_k\|_{L^2}^2\nonumber \\
&\le 
(1+\eta) p_{\Gamma_h, \omega}(v_k)
+2 (1+\frac1\eta) 2^{-m} \# J\,.
\label{eq:42neue}
\end{alignat}
Recalling (\ref{eq:42neuc}), (\ref{eq:42neud}), and (\ref{eq:42neue}) we obtain
\begin{alignat*}1
\left(1-\frac{m \# J}{k}\right) 2\ln 2 &\le \frac1k \sum_{k \text{ good}}
p_{\Gamma_h, \omega} ( w^{(h)}) \\
&\le 
\frac{1+\eta}{k} \sum_{h=1}^k p_{\Gamma_h, \omega} (v_k) + 2 \left(1 +
\frac1\eta\right) 2^{-m} \# J\,.
\end{alignat*}
Taking the limit $k\to\infty$ and recalling (\ref{eq:42neua}) and
(\ref{eq:42neub}) we get 
\begin{equation}
2\ln 2 \le (1+\eta) (1+2\delta) (\ln 2) E^* + 2 \left(1+\frac1\eta\right) 
2^{-m} C_\delta (E^*+1)\,.
\end{equation}
Since $m$, $\delta$, $\eta$ were arbitrary it follows that $2\le E^*$ as
desired.

In concluding this digression, we summarize the main points of the argument:
\begin{enumerate}
\item For most levels $h$ the function $v_k$ can be approximated by a function
  $w^{(h)}$ which is monotone on scale $2^{-h}$ (near the jump set).
\item The function $w^{(h)}$ is close to $v_k$ in $L^1$ with a bound that
  scales slightly   better than $2^{-h}$. 
\item The (truncated) energy of $w^{(h)}$ is controlled by the energy of $v_k$.
\end{enumerate}
We will use a similar argument in higher dimension. In that case the
approximations $w^{(h)}$ will be one-dimensional and monotone. The good levels
$g$ are selected by the condition that the local $BV$ norm does not change
under successive 
mollification on the scales $2^{-h-m}$, \dots, $2^{-h}$. 

\section{One-dimensional test functions}\label{1D}
In this section we show that if a function is one dimensional and takes values
in $\Z^N$ it is possible to estimate its nonlocal truncated energy with the
right line tension energy. Our efforts then will be devoted to show that these
properties are almost satisfied locally by any sequence of finite energy. 

Given a scalar kernel $\Gamma'\in L^1(\R^2;\R)$ and an orientation $\nu\in
S^1$ we define the one-dimensional interfacial energy (per unit length) by
\begin{alignat*}{1}
\gamma_{1D}^{\Gamma'}(\nu) =& 2 \int_{\{x\cdot\nu\le 0\le y\cdot\nu\,,\,\,
x\wedge \nu=0\}} \Gamma'(x-y) d\calH^1(x) dy
\\
=& 2 \int_{[0,\infty)^2\times \R} \Gamma'( (t_1-t_2)\nu + s \nu^\perp) dt_1dt_2ds\,.
\end{alignat*}
\begin{lemma} \label{lemmagammaexplicit}
For $a\in \R^N$, $k\in \N$, and $\Gamma$ as in Section \ref{layering}, one has
\begin{equation*}
\gamma_{1D}^{a\cdot \Gamma_k a}(\nu) =2(\ln 2) \int_{\{x\cdot\nu =1\}} a\cdot
\Gamma(x) a \,d\calH^1(x)
=2(\ln 2)\int_{-\pi/2}^{\pi/2}
a\cdot\hat\Gamma(e_\theta)a\, \cos\theta\,  d\theta 
\,.
\end{equation*}
\end{lemma}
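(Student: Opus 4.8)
\textbf{Proof plan for Lemma \ref{lemmagammaexplicit}.}
The plan is to compute the one-dimensional interfacial energy $\gamma_{1D}^{a\cdot\Gamma_k a}(\nu)$ directly from the definition, exploiting the homogeneity of the underlying kernel $\Gamma(z)=|z|^{-3}\hat\Gamma(z/|z|)$ and the dyadic structure of $\phi_k$. First I would write, using the second expression for $\gamma_{1D}^{\Gamma'}$ with $\Gamma'(z)=a\cdot\Gamma_k(z)a=\phi_k(|z|)\, a\cdot\hat\Gamma(z/|z|)a$,
\begin{equation*}
\gamma_{1D}^{a\cdot\Gamma_k a}(\nu)=2\int_{[0,\infty)^2\times\R}\phi_k\big(|(t_1-t_2)\nu+s\nu^\perp|\big)\,a\cdot\hat\Gamma\Big(\tfrac{(t_1-t_2)\nu+s\nu^\perp}{|(t_1-t_2)\nu+s\nu^\perp|}\Big)a\;dt_1dt_2ds\,.
\end{equation*}
Changing variables $\tau=t_1-t_2$ and integrating out the remaining free translation variable (the map $(t_1,t_2)\mapsto(t_1-t_2,t_1+t_2)$ on $[0,\infty)^2$ has image $\{(\tau,\sigma):|\tau|\le\sigma\}$, so that the measure of the $\sigma$-fibre over $\tau$ equals $|\tau|$ after the factor $\tfrac12$ from the Jacobian), one reduces to a two-dimensional integral over $(\tau,s)\in\R^2$ with an extra weight $|\tau|$. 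Then I would pass to polar coordinates $\tau+is=\rho e_\theta$ in the plane spanned by $\nu,\nu^\perp$, so that $|\tau|=\rho|\cos\theta|$ and the integrand becomes $\phi_k(\rho)\,a\cdot\hat\Gamma(e_\theta)a\cdot\rho|\cos\theta|\cdot\rho\,d\rho\,d\theta$, where the last $\rho$ is the polar Jacobian.

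The radial and angular integrals now decouple. The angular integral is $\int_{0}^{2\pi}a\cdot\hat\Gamma(e_\theta)a\,|\cos\theta|\,d\theta$; using that $\hat\Gamma(-e_\theta)=\hat\Gamma(e_\theta)$ (the kernel in the original functional is even, hence we may and do take $\hat\Gamma$ even, and in any case $a\cdot\Gamma a$ appears symmetrized) this equals $2\int_{-\pi/2}^{\pi/2}a\cdot\hat\Gamma(e_\theta)a\,\cos\theta\,d\theta$, which is exactly $2\int_{\{x\cdot\nu=1\}}a\cdot\Gamma(x)a\,d\calH^1(x)$: indeed parametrizing $\{x\cdot\nu=1\}$ by $x=\nu+t\nu^\perp$ gives $|x|=\sqrt{1+t^2}$, $x/|x|=e_\theta$ with $\tan\theta=t$, $dt=\sec^2\theta\,d\theta$, and $\Gamma(x)\cdot a\cdot a=(1+t^2)^{-3/2}a\cdot\hat\Gamma(e_\theta)a=\cos^3\theta\,(a\cdot\hat\Gamma(e_\theta)a)$, so $\int a\cdot\Gamma(\nu+t\nu^\perp)a\,dt=\int_{-\pi/2}^{\pi/2}\cos\theta\,(a\cdot\hat\Gamma(e_\theta)a)\,d\theta$. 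The radial integral is $\int_0^\infty\phi_k(\rho)\,\rho^2\,d\rho$; by the explicit piecewise definition of $\phi_k$ this is a scale-invariant quantity, and a direct computation gives $\int_0^{2^{-k}}\phi_k(\rho)\rho^2 d\rho=\int_{2^{-k-1}}^{2^{-k}}(\rho^{-1}-2^{3k}\rho^2)d\rho+\int_0^{2^{-k-1}}(2^{3(k+1)}-2^{3k})\rho^2 d\rho$, which evaluates to $\ln 2-\tfrac{7}{24}+\tfrac{7}{24}=\ln 2$ (the point being that the constant piece is chosen precisely so that the answer is $\ln 2$, independent of $k$; this is the same computation as in (\ref{eq:42neucc}) in the scalar one-dimensional setting, transplanted to the radial variable).

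Assembling the two factors together with the prefactor $2$ and the Jacobian factor $\tfrac12$ from the change of variables $(t_1,t_2)\mapsto(\tau,\sigma)$ yields $\gamma_{1D}^{a\cdot\Gamma_k a}(\nu)=2(\ln 2)\int_{\{x\cdot\nu=1\}}a\cdot\Gamma(x)a\,d\calH^1(x)=2(\ln 2)\int_{-\pi/2}^{\pi/2}a\cdot\hat\Gamma(e_\theta)a\,\cos\theta\,d\theta$, as claimed. I do not expect a serious obstacle here: the only points requiring care are bookkeeping of the Jacobian and measure factors in the reduction from the four variables $(t_1,t_2,s)$ plus implicit translation to the two polar variables $(\rho,\theta)$, and checking that the radial integral of $\phi_k$ against $\rho^2$ is genuinely $k$-independent — both are elementary. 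One should also note that the integrand is nonnegative (by (\ref{eqgammaposdeftheointro}), or more generally since $\hat\Gamma$ takes values in $\R^{N\times N}_+$), so Tonelli's theorem justifies all the interchanges of integration.
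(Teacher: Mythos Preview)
Your approach coincides with the paper's: both factor the integral into a radial part $\int_0^\infty\rho^2\phi_k(\rho)\,d\rho=\ln 2$ and an angular part $\int_{-\pi/2}^{\pi/2}a\cdot\hat\Gamma(e_\theta)a\,\cos\theta\,d\theta$, and identify the latter with the line integral over $\{x\cdot\nu=1\}$ exactly as you do. The paper's execution starts from the first form of the definition, writes $x=-t_1\nu$, $y=x+\rho e_\theta$, and integrates out $t_1\in[0,\rho\cos\theta]$ directly, bypassing the $(\tau,\sigma)$ substitution.

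One slip to flag. The second displayed formula for $\gamma_{1D}^{\Gamma'}$ that you take as your starting point has a typo in the paper: with $x=-t_1\nu$ and $y=t_2\nu+s\nu^\perp$ one gets $x-y=-(t_1+t_2)\nu-s\nu^\perp$, so (using evenness of $\Gamma'$) the argument should be $(t_1+t_2)\nu+s\nu^\perp$, not $(t_1-t_2)\nu+s\nu^\perp$. Your fibre claim inherits this: with $\tau=t_1-t_2$, $\sigma=t_1+t_2$ on $[0,\infty)^2$, the $\sigma$-fibre over a fixed $\tau$ is $[|\tau|,\infty)$, which has \emph{infinite} measure, not $|\tau|$; taken literally, the reduced integral diverges. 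The correct substitution is $\tau=t_1+t_2\ge 0$, for which the fibre of $t_1-t_2$ is $[-\tau,\tau]$ of length $2\tau$ (times the Jacobian $\tfrac12$ gives weight $\tau$), and then $\theta$ ranges only over $(-\pi/2,\pi/2)$ with no further factor of $2$ from evenness. Your two errors happen to cancel (since $\int_{\R}|\tau|f(\tau)\,d\tau=2\int_0^\infty\tau f(\tau)\,d\tau$ for even $f$), so your final formula is correct, but the intermediate reasoning should be repaired.
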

\begin{proof}
We consider  polar coordinates centered at $x=-t_1\nu$, and set
 $y=x+ \rho e_\theta$, $e_\theta=\cos\theta\nu+\sin\theta\nu^\perp$. Then
 $y\cdot\nu=-t_1+\rho \cos\theta$, and
\begin{alignat*}{1}
\gamma_{1D}^{a\cdot \Gamma_k a}(\nu) =&
2\int_0^\infty\int_0^{\infty}\!\! \rho
\phi_k(\rho)\int_{\{\rho \cos\theta\ge
  t_1\}}a\cdot\hat\Gamma(e_\theta)a\,d\theta\,d\rho \, dt_1\\  
=&2\int_0^{\infty}\!\! \rho^2 \phi_k(\rho)\,d\rho
\int_{-\pi/2}^{\pi/2}
a\cdot\hat\Gamma(e_\theta)a\, \cos\theta\,  d\theta 
\,.
\end{alignat*}
By a direct computation one sees that
$$
\int_0^{\infty} \rho^2 \phi_k(\rho)\,d\rho=\ln 2\,.
$$
This proves the second expression.

At the same time the set $\{x\cdot\nu=1\}$ can be parametrized by
$x=e_\theta/\cos\theta$, $\theta\in(-\pi/2,\pi/2)$. Therefore
\begin{equation*}
  \int_{\{x\cdot\nu =1\}} a\cdot
\Gamma(x) a \,d\calH^1(x) =\int_{-\pi/2}^{\pi/2} (\cos\theta)^3
a\cdot\hat\Gamma(e_\theta)a \frac{1}{\cos^2\theta} d\theta\,.
\end{equation*}
Collecting the previous expressions the proof is concluded.
\end{proof}

\begin{lemma}\label{lemma1Dinterfacesseparate}
Assume $u(x)=u_0+v_0\lambda(x\cdot\nu)$, $u\in BV(\R^2;\Z^N)$, with
$\lambda$ monotone and $\nu\in S^1$, $u_0, v_0\in\R^N$. For $m,k\in\N$ with
$k\ge m+1$,  set  $Q=[0,2^{-m}]^2$. Then 
\begin{equation}
p_{\Gamma_k,Q}(u) \ge \int_{q\cap J_u} |[u]|^2\frac{1}{|v_0|^2}
\gamma_{1D}^{v_0\cdot \Gamma_kv_0}(\nu) d\calH^1\,. 
\end{equation}
Here $q=[2^{-k}, 2^{-m}-2^{-k}]^2$.
\end{lemma}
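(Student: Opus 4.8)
The plan is to reduce the quadratic form $p_{\Gamma_k,Q}(u)$ to a sum of contributions, one per jump line of the one-dimensional function $u$, and to bound each from below by the associated line tension. We may assume $v_0\neq0$ (otherwise $u\equiv u_0\in\Z^N$, $J_u=\emptyset$, and both sides vanish) and recall that $\hat\Gamma$, hence $\Gamma_k$, is even, so $\Gamma_k(x-y)=\Gamma_k(y-x)$. Since $\lambda$ is monotone and $v_0\lambda$ takes values in the discrete set $\{\tau\in\R:\tau v_0\in\Z^N\}$, $\lambda$ is a monotone step function, say $\lambda=\mathrm{const}+\sum_j\lambda_j\chi_{(t_j,\infty)}$ with all $\lambda_j$ of one (say positive) sign; then $J_u=\bigcup_j\ell_j$ with $\ell_j:=\{x\cdot\nu=t_j\}$ pairwise disjoint, $[u]=\lambda_j v_0$ on $\ell_j$, and $|[u]|^2/|v_0|^2=\lambda_j^2$ there. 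For $x\cdot\nu<y\cdot\nu$ one has $u(y)-u(x)=v_0\,\Lambda(x,y)$ with $\Lambda(x,y):=\sum_{j:\,x\cdot\nu<t_j\le y\cdot\nu}\lambda_j\ge0$, so using the symmetry of the integrand, positive semidefiniteness of $\Gamma_k$ (giving $v_0\cdot\Gamma_k(z)v_0\ge0$), and $(\sum_j a_j)^2\ge\sum_j a_j^2$ for $a_j\ge0$ (the discarded cross terms are $\ge0$),
\begin{equation*}
p_{\Gamma_k,Q}(u)=2\int_A\Lambda(x,y)^2\,v_0\cdot\Gamma_k(x-y)v_0\,dx\,dy\ \ge\ 2\sum_j\lambda_j^2\int_{A_j}v_0\cdot\Gamma_k(x-y)v_0\,dx\,dy,
\end{equation*}
where $A=\{(x,y)\in Q\times Q:x\cdot\nu<y\cdot\nu\}$ and $A_j=\{(x,y)\in Q\times Q:x\cdot\nu<t_j\le y\cdot\nu\}$.

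Next, for each fixed $j$ I would prove $\int_{A_j}v_0\cdot\Gamma_k(x-y)v_0\,dx\,dy\ge\tfrac12\,\calH^1(q\cap\ell_j)\,\gamma_{1D}^{v_0\cdot\Gamma_k v_0}(\nu)$. Writing $x=(t_j-r)\nu+p$, $y=(t_j+s)\nu+p'$ with $r,s\in\R$, $p,p'\in\nu^\perp$, the constraint in $A_j$ becomes $r>0$, $s\ge0$, and $x-y=-(r+s)\nu+\sigma$ with $\sigma:=p-p'\in\nu^\perp$; this change of variables is volume preserving, and Fubini gives
\begin{equation*}
\int_{A_j}v_0\cdot\Gamma_k(x-y)v_0\,dx\,dy=\int_{r>0}\int_{s\ge0}\int_{\nu^\perp}G(r,s,\sigma)\;v_0\cdot\Gamma_k\big({-}(r+s)\nu+\sigma\big)v_0\,d\sigma\,ds\,dr,
\end{equation*}
with $G(r,s,\sigma)$ the $\calH^1$-measure of $\{p\in\nu^\perp:(t_j-r)\nu+p\in Q\ \text{and}\ (t_j+s)\nu+p-\sigma\in Q\}$. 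On the support of $\Gamma_k$ one has $(r+s)^2+|\sigma|^2\le2^{-2k}$, hence \emph{both} $r\le2^{-k}$ and $|s\nu-\sigma|=(s^2+|\sigma|^2)^{1/2}\le2^{-k}$; since $q$ is $Q$ with a margin of width $2^{-k}$ removed along each coordinate axis, this forces, for every $p$ with $t_j\nu+p\in q$, that $(t_j\nu+p)-r\nu\in Q$ (i.e.\ $x\in Q$) and $(t_j\nu+p)+(s\nu-\sigma)\in Q$ (i.e.\ $y\in Q$). Therefore $G(r,s,\sigma)\ge\calH^1(q\cap\ell_j)$ on that support, and since $v_0\cdot\Gamma_k v_0\ge0$, using the substitution $\rho=r+s$ and the definition of $\gamma_{1D}$, the right-hand side is at least $\calH^1(q\cap\ell_j)\int_{\{z\cdot\nu<0\}}|z\cdot\nu|\,v_0\cdot\Gamma_k(z)v_0\,dz=\tfrac12\,\calH^1(q\cap\ell_j)\,\gamma_{1D}^{v_0\cdot\Gamma_k v_0}(\nu)$.

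Combining these estimates, summing over $j$, and using $|[u]|^2/|v_0|^2=\lambda_j^2$ on $\ell_j$ and the disjointness of the $\ell_j$, I get $p_{\Gamma_k,Q}(u)\ge\sum_j\lambda_j^2\,\calH^1(q\cap\ell_j)\,\gamma_{1D}^{v_0\cdot\Gamma_k v_0}(\nu)=\int_{q\cap J_u}|[u]|^2|v_0|^{-2}\,\gamma_{1D}^{v_0\cdot\Gamma_k v_0}(\nu)\,d\calH^1$, which is the claim. The one genuinely nontrivial step is the geometric bound $G(r,s,\sigma)\ge\calH^1(q\cap\ell_j)$: it rests on the observation that on the support of the \emph{truncated} kernel not merely $|x-y|\le2^{-k}$, but each of the two displacements carrying $x$ and $y$ off the interface $\ell_j$ — namely $-r\nu$ and $s\nu-\sigma$ — has length $\le2^{-k}$, which is exactly the margin separating $q$ from $\partial Q$, so that the $2^{-k}$-columns erected over $q\cap\ell_j$ remain inside $Q$. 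The rest is bookkeeping with the monotone structure and Fubini.
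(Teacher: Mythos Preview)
Your proof is correct and follows essentially the same route as the paper's: both arguments first use monotonicity of $\lambda$ and nonnegativity of $v_0\cdot\Gamma_k v_0$ to replace $(\lambda(x\cdot\nu)-\lambda(y\cdot\nu))^2$ by $\sum_j[\lambda]^2(t_j)$, reducing to a single interface, and then exploit that $\mathrm{supp}\,\Gamma_k\subset B_{2^{-k}}$ together with the $2^{-k}$-margin $B_{2^{-k}}(q)\subset Q$ to recover the full half-plane integrals that define $\gamma_{1D}$. The only cosmetic difference is in the per-jump estimate: the paper restricts the $x$-integration to the slab $I_t+[-2^{-k},0]\nu$ and then extends the $y$-integration to the half-plane, whereas you pass to the coordinates $(r,s,\sigma,p)$ and bound the overlap function $G(r,s,\sigma)$ below by $\calH^1(q\cap\ell_j)$; these are two packagings of the same geometric observation, and your version makes the role of the two separate displacements $-r\nu$ and $s\nu-\sigma$ (each of length $\le 2^{-k}$ on the support) pleasantly explicit.
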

\begin{proof}
We first write out
\begin{equation*}
p_{\Gamma_k,Q}(u) = \int_{Q\times Q} (v_0\cdot \Gamma_k(x-y) v_0) 
(\lambda(x\cdot \nu)-\lambda(y\cdot \nu))^2 dx dy\,.
\end{equation*}
Notice that by assumption $v_0\cdot\Gamma_k v_0\ge 0$ pointwise. 
By symmetry we can restrict to $x\cdot\nu\le y\cdot \nu$, and add a
factor 2. The last factor can be estimated, using the monotonicity of
$\lambda$, for $x,y\not\in J_u$, by 
\begin{equation*}
(\lambda(x\cdot\nu)-\lambda(y\cdot\nu))^2 = \left( \sum_{t\in J_\lambda\cap [
x\cdot\nu,y\cdot\nu]} [\lambda](t)\right)^2 \ge 
\sum_{t\in J_\lambda\cap [x\cdot\nu,y\cdot\nu]} [\lambda]^2(t)
\end{equation*}
(here $[a,b]$ is the  segment joining $a$ and $b$).
Therefore
\begin{equation*}
p_{\Gamma_k,Q}(u) \ge 2 \int_{(x,y)\in Q\times Q, (y-x)\cdot\nu\ge 0} 
\sum_{t\in J_\lambda\cap [x\cdot\nu,y\cdot\nu]} [\lambda]^2(t)
(v_0\cdot \Gamma_k(x-y) v_0) dxdy\,.
\end{equation*}
Swapping the sum with the integral, we obtain
\begin{equation*}
p_{\Gamma_k,Q}(u) \ge 
2 \sum_{t\in J_\lambda} [\lambda]^2(t)
\int_{(x,y)\in Q\times Q, x\cdot\nu \le t\le y\cdot\nu} 
(v_0\cdot \Gamma_k(x-y) v_0) dxdy\,.
\end{equation*}
At this point we have separated the different interfaces, and we can
deal with a single one. To conclude the proof it suffices to show that for any
$t\in J_\lambda$,  
\begin{equation}\label{eqIt}
2 \int_{(x,y)\in Q\times Q,  x\cdot\nu \le t\le y\cdot\nu} 
(v_0\cdot \Gamma_k(x-y) v_0) dxdy \ge \calH^1(I_t) \gamma_{1D}^{v_0\cdot \Gamma_kv_0}(\nu) \,,
\end{equation}
where $I_t=q\cap \{z: z\cdot \nu=t\}$ is the ``reduced'' interface.
Recalling that $\supp \Gamma_k\subset B_{2^{-k}}$ and $B_{2^{-k}}(q)\subset
Q$, we obtain, for any $x\in q$, 
\begin{equation*}
\int_{y\in Q, y\cdot\nu\geq t} 
(v_0\cdot \Gamma_k(x-y) v_0) dy= \int_{y\in \R^2, y\cdot\nu\geq t}
(v_0\cdot \Gamma_k(x-y) v_0) dy\,. 
\end{equation*}
We restrict in (\ref{eqIt}) the $x$ integration to the set $S_t=
I_t+[-2^{-k},0]\nu=
\{ z + w: z\in I_t, w\in [-2^{-k},0]\nu\}\subset Q$, and decompose the integral
into the 
component parallel and orthogonal to $\nu$. We obtain
\begin{alignat*}{1}
&2 \int_{(x,y)\in S_t\times \R^2, y\cdot\nu\ge t} 
(v_0\cdot \Gamma_k(x-y) v_0) dxdy\\
= &2 \calH^1(I_t) \int_{(x,y)\in ([t-2^{-k},t]\nu\times\R^2),
y\cdot\nu\ge t} 
(v_0\cdot \Gamma_k(x-y) v_0) d\calH^1(x)dy\,.
\end{alignat*}
Since $\supp \Gamma_k\in B_{2^{-k}}$ the integral in $x$ can be
extended to $(-\infty,t)\nu$. This concludes the proof.
\end{proof}

The next lemma deals with the reduction of one-dimensional functions to
integer-valued one-dimensional functions. 
\begin{lemma}\label{lemmaZn}
Let $\Omega\subset\R^n$ be bounded and measurable, $M>0$. 
Let $u:\R^n\to\R^N$ be of the form
\begin{equation*}
u(x)=a \lambda(x\cdot\nu) + b\,,
\end{equation*}
for some $a,b\in \R^N$, $\lambda\in L^\infty(\R;\R)$,
$\nu\in S^{n-1}$. If 
\begin{equation*}
\|a\lambda \|_{L^\infty(\Omega;\R^N)}\le M 
\end{equation*}
then there are $a^*, b^*\in \Z^N$, $\lambda^*\in L^\infty(\R;\Z)$ such that
the function $u^*(x)=a^* \lambda^*(x\cdot\nu) + b^*$ obeys
\begin{equation*}
\| u-u^*\|_{L^1(\Omega;\R^N)}\le C \| \dist(u,\Z^N)\|_{L^1(\Omega)}\,.
\end{equation*}
Here $C$ depends only on $N$ and $M$.
\end{lemma}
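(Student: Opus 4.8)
The plan is to reduce the $N$-dimensional, vector-valued statement to a one-dimensional scalar rounding argument, using the fact that $u$ depends only on the single real variable $t=x\cdot\nu$. First I would set $g(t)=a\lambda(t)+b\in\R^N$, so that $u(x)=g(x\cdot\nu)$, and observe that by Fubini the claim is equivalent to a one-dimensional estimate: writing $\Omega$ in a coordinate frame adapted to $\nu$, the $L^1$ norm over $\Omega$ of any function of $x\cdot\nu$ is an integral in $t$ of that function (scalar-valued in $t$) against a finite, bounded weight coming from the measure of the slices $\Omega\cap\{x\cdot\nu=t\}$. Since $\Omega$ is bounded and measurable, these slice-lengths are uniformly bounded, and more importantly both $\|u-u^*\|_{L^1(\Omega)}$ and $\|\dist(u,\Z^N)\|_{L^1(\Omega)}$ are integrals against the \emph{same} weight; so it suffices to produce $a^*,b^*\in\Z^N$ and $\lambda^*\in L^\infty(\R;\Z)$ with $|g(t)-a^*\lambda^*(t)-b^*|\le C\,\dist(g(t),\Z^N)$ for a.e. $t$ in the (bounded) range of relevant $t$'s.

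The heart of the matter is thus a pointwise-in-$t$ construction. The natural choice is to pick $\lambda^*(t)$ to be a nearest integer to (a suitable scalar proxy for) $\lambda(t)$; more precisely, select the component $i_0$ of $a$ with $|a_{i_0}|$ maximal among nonzero components (if $a=0$ then $u$ is constant in $t$ and the result is the trivial rounding of a single vector, so assume $a\neq0$), and let $\lambda^*(t)$ be an integer nearest to $a_{i_0}\lambda(t)+b_{i_0}$, rescaled appropriately — in fact it is cleaner to take $a^*=$ nearest integer vector to $a/|a|_\infty$-type normalization. Then set $a^*$ and $b^*$ to be nearest-integer-vectors to $a$ and $b$ respectively. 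The key inequality to establish is that, whenever $\dist(g(t),\Z^N)$ is small, the integer $\lambda^*(t)$ is forced to be the ``right'' multiple, and the error $|g(t)-a^*\lambda^*(t)-b^*|$ is then controlled by $\dist(g(t),\Z^N)$ plus a bounded contribution that is itself $\le C\dist(g(t),\Z^N)$ because $\lambda$ (restricted to where $a\lambda$ is bounded by $M$) ranges over a bounded set; and whenever $\dist(g(t),\Z^N)$ is of order one (bounded below), the bound $|g(t)-a^*\lambda^*(t)-b^*|\le C$ holds trivially since $g(t)=a\lambda(t)+b$ is bounded on $\Omega$ by $M+|b|$ and $C$ may depend on $N,M$. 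Combining the two regimes gives the pointwise bound with a single constant depending only on $N$ and $M$.

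The main obstacle, and the reason some care is needed, is that the decomposition $u=a^*\lambda^*(\,\cdot\,)+b^*$ is \emph{structured}: one is not free to round $u(x)$ to an arbitrary nearby lattice point independently at each $x$, but must do so in the form ``fixed integer vector $a^*$ times a scalar integer-valued profile $\lambda^*$, plus a fixed integer shift $b^*$''. The subtlety is to check that this rigid ansatz still captures the nearest lattice point up to a factor: if $\lambda(t)$ itself is not close to an integer but $a\lambda(t)+b$ happens to be close to $\Z^N$ (possible when $a$ has small or irrational-ratio components), then $a^*\lambda^*$ with $\lambda^*$ integer may not be able to follow $u$, and $\dist(g(t),\Z^N)$ would not be small for such $t$ — so one must argue that the set of ``problematic'' $t$ is exactly the set where $\dist(u,\Z^N)$ is bounded below, on which the crude bound suffices. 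I would handle this by a dichotomy on the size of $\dist(g(t),\Z^N)$ relative to a threshold depending only on $a$ (through $|a|_\infty$) and $N$, noting that the threshold and all constants can be absorbed into the final $C=C(N,M)$ because on $\{a\lambda \text{ bounded by } M\}$ everything lives in a compact set; the routine estimates in each branch are then elementary. Finally, one reassembles via Fubini to obtain the stated $L^1(\Omega)$ bound.
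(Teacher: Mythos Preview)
Your approach has a genuine gap: the pointwise bound $|g(t)-a^*\lambda^*(t)-b^*|\le C\,\dist(g(t),\Z^N)$ cannot hold with $a^*$ obtained by rounding $a$ (or any simple rescaling of $a$) to the nearest integer vector. Take $N=2$, $b=0$, $a=(1,2)/\sqrt5$, and let $\lambda$ take the two values $0$ and $\sqrt5$ on sets of positive measure (so $\|a\lambda\|_\infty=\sqrt5$). Then $u$ takes the values $(0,0)$ and $(1,2)$, both in $\Z^2$, hence $\dist(u,\Z^2)\equiv0$. But rounding $a\approx(0.447,0.894)$, or $a/|a|_\infty=(1/2,1)$, to the nearest integer vector yields $(0,1)$ or $(1,1)$, never $(1,2)$; for any such $a^*$ and any $b^*\in\Z^2$ the range $\{a^*k+b^*:k\in\Z\}$ cannot contain both $(0,0)$ and $(1,2)$ (their difference is not an integer multiple of $a^*$), so $u^*$ misses one of them by at least $1$ and $\|u-u^*\|_{L^1}>0$ while the right-hand side vanishes. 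Your dichotomy does not help here, since $\dist(g(t),\Z^N)=0$ for every $t$, placing all points in the ``small distance'' regime where you claim the sharp pointwise bound.

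The correct $a^*$ in this example is $(1,2)$, which is invisible from $a$ alone: it must be read off from the lattice points that $u$ actually approaches. The paper's proof does exactly this. It partitions $\Omega$ into the sets $\Omega(w)=\{x:w\text{ is the nearest lattice point to }u(x)\}$ for $w$ in the finite set $\Z^N\cap B_{M+2N}$, picks the two most-occupied values $w_1,w_2$, sets $b^*=w_1$ and $a^*$ equal to the primitive integer vector along $w_2-w_1$, and then argues in $L^1$ rather than pointwise: on each $\Omega(w)$ either $w-w_1\in\Z a^*$, in which case $\lambda^*$ is set to the corresponding integer, or $w-w_1\notin\R a^*$ and hence lies at distance at least some $\zeta=\zeta(N,M)>0$ from the line $\R a^*$, which combined with the $L^1$ control of an intermediate approximant forces the contribution of $\Omega(w)$ to the error to be dominated by $\eta=\|\dist(u,\Z^N)\|_{L^1}$.
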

Notice that the $L^\infty$ bound is needed, as the following example
on $\Omega=(0,3)$ shows:
\begin{equation*}
b=0\,,\hskip5mm 
a=\vectdue{1}{1/k}\,,\hskip5mm 
\lambda(x)= 
\begin{cases}
0 & \text{ if }x\in (0,1]\\
1 & \text{ if } x\in (1,2]\\
k & \text{ if } x\in (2,3)\,.
\end{cases}
\end{equation*}
Here $\|\dist(u_k, \Z^2)\|_{L^1}=1/k$, but for any $u^*_k$ as stated one
has $\|u_k-u^*_k\|_{L^1}\ge 1/2$. Indeed, since the three values
$(0,0)$, $(1,0)$, $(k,1)$ do not lie on a straight line, $u^*_k$ cannot
take all three of them; hence at least one entry must be off by at
least $1/2$.
\begin{proof}[Proof of Lemma~\ref{lemmaZn}]
Let
\begin{equation*}
\eta=\|\dist(u,\Z^N)\|_{L^1(\Omega)}\,.
\end{equation*}
We can assume without loss of generality that $|a|=1$ and $|b|\le N$
(otherwise we prove the lemma for the function $v(x)=u(x)-[b]$, where $[b]$
denotes a vector whose components are the integer parts of those of $b$). 
We define $z:\R\to \Z^N$ measurable and such that
$\dist(a \lambda(t) + b,\Z^N)=|a \lambda(t) + b-z(t)|$, for all
$t\in\R$. Clearly $\|z\|_{\infty}\le M+2N$. 

For $w\in \Z^N\cap B_{M+2N}$, define
\begin{equation*}
\Omega(w) = \{x\in \Omega: z(x\cdot\nu)=w\}\,,
\end{equation*}
so that
\begin{equation}\label{eqdefwlambdaeta}
\|\dist(u,\Z^N)\|_{L^1}= \sum_w \|a\lambda(x\cdot\nu)+b-w\|_{L^1(\Omega(w))} = \eta\,.
\end{equation}
Choose $w_1\ne w_2$  such that
\begin{equation*}
\calL^n(\Omega(w_1))\ge \calL^n(\Omega(w_2))\ge\calL^n(\Omega(w))
\text{ for all } w\ne w_1\,.
\end{equation*}
Since $\Z^N\cap B_{M+2N}$ contains a finite number of points we also have that
\begin{equation*}
\calL^n(\Omega)\le c \calL^n(\Omega(w_1))\,,\hskip1cm
\calL^n(\Omega\setminus\Omega(w_1))\le c \calL^n(\Omega(w_2))\,,
\end{equation*}
with $c$ depending only on $M$ and $N$.
Let $\lambda_1$ and $\lambda_2$ be the average of $\lambda(x\cdot\nu)$ over
$\Omega(w_1)$ and $\Omega(w_2)$ respectively. 
Then
\begin{equation*}
  \calL^n(\Omega(w_1)) |a\lambda_1 + b-w_1|\le 
  \|a\lambda(x\cdot\nu)+b-w\|_{L^1(\Omega(w_1))} \le \eta\,,
\end{equation*}
and since $\calL^n(\Omega)\le c \calL^n(\Omega(w_1))$, we
obtain
\begin{equation}\label{eqomega1abla1w1}
\calL^n(\Omega)|a\lambda_1 + b-w_1|\le 
c\eta\,.
\end{equation}
We set $b^*=w_1$.  Argueing as above we obtain
\begin{equation*}
\calL^n(\Omega\setminus\Omega(w_1))|a\lambda_2 + b-w_2|\le c\eta\,,
\end{equation*}
which implies
\begin{equation}\label{cstar}
\calL^n(\Omega\setminus\Omega(w_1))|a(\lambda_2-\lambda_1) -(w_2-w_1)|\le c^*\eta\,.
\end{equation}
If $\calL^n(\Omega\setminus\Omega(w_1))\le 2 c^*\eta$ then setting 
$a^*=\lambda^*=0$ will do. Otherwise, since $|w_2-w_1|\ge 1$, by (\ref{cstar}) we
obtain that $|a(\lambda_2-\lambda_1)|=|\lambda_2-\lambda_1|\ge 1/2$. 
Let 
\begin{equation*}
\xi = \min\{t>0: t(w_2-w_1)\in\Z^N\setminus\{0\}\}\,,
\end{equation*}
clearly $\xi\in[1/(2M+4N),1]$.
We set
\begin{equation*}
a^*=\xi(w_2-w_1)\in \Z^N\,,\hskip1cm
\tilde \lambda = \frac{\lambda-\lambda_1}{(\lambda_2-\lambda_1)\xi}\,. 
\end{equation*}
Then
\begin{equation*}
|(a^* \tilde\lambda +w_1)-( a \lambda +b)|\le
\left| (w_2-w_1) \frac{\lambda-\lambda_1}{\lambda_2-\lambda_1}  -
a (\lambda-\lambda_1)\right| + |w_1-(a \lambda_1 +b)|\,.
\end{equation*}
The second term can be controlled by (\ref{eqomega1abla1w1}).
The first one is bounded by $2 |\lambda-\lambda_1|\,
|(w_2-w_1)-(\lambda_2-\lambda_1) a|$. 
Integrating separately over $\Omega(w_1)$ and over
$\Omega\setminus\Omega(w_1)$,  using the estimate
$\|\lambda-\lambda_1\|_{L^1(\Omega(w_1))}\le c\eta$ and (\ref{cstar}), we obtain
\begin{equation*}
\|( a^* \tilde\lambda+w_1)-(a \lambda  +b)\|_{L^1(\Omega)}\le c\eta\,,
\end{equation*}
and recalling the definition of $w$
\begin{equation}\label{astartildez}
  \sum_w \| a^*\tilde \lambda +w_1-w\|_{L^1(\Omega(w))}\le c\eta\,.
\end{equation}
It remains to replace $\tilde\lambda$ by an integer-valued function $\lambda^*$. To do
this, consider
\begin{equation*}
\zeta = \inf \{\dist( \R z, \Z^N \cap B_{2M+4N} \setminus \R z) : z\in
\Z^N \cap B_{2M+4N}\}\,.
\end{equation*}
We remark that $\zeta>0$. Indeed, if this was not the case there would
be sequences $z_i, w_i\in \Z^N\cap B_{2M+4N}$ and $t_i\in \R$ such that
$|t_i z_i - w_i|\to 0$ and $w_i \not\in \R z_i$. By compactness the
sequences $z_i$ and $w_i$ have a constant subsequence, hence we obtain 
$|t_i z - w|\to0$. Since $\R z$ is closed this implies
$w\in \R z$, a contradiction. 

Fix one $w\in\Z^N\cap B_{M+2N}$.
If there is $\lambda_w\in \R$ such that 
$\lambda_w a^*=w-w_1$, then from the definition of $\xi$  we obtain
$\lambda_w\in\Z$, and we can set $\lambda^*=\lambda_w$ in $\Omega(w)$. 
Otherwise, $w-w_1\not\in \R a^*$, hence 
$| t  a^*  -w+w_1|\ge \zeta$ for all $t\in\R$.
In this case we set $\lambda^*=0$  in $\Omega(w)$, and estimate
\begin{equation*}
| w-w_1| \le 2M+4N \le \frac{2M+4N}{\zeta} |a^*\tilde\lambda + w_1-w|
\end{equation*}
pointwise in $\Omega(w)$, which gives
\begin{equation*}
  \|a^*\lambda^* + w_1-w\|_{L^1(\Omega(w))}
  \le  \frac{2M+4N}{\zeta}  
    \|a^*\tilde\lambda + w_1-w\|_{L^1(\Omega(w))}\,.
\end{equation*}
Recalling (\ref{eqdefwlambdaeta}) and (\ref{astartildez})  the proof is
concluded. 
\end{proof}

\begin{remark}\label{remZn}
The function $u^*$ constructed in the previous Lemma always satisfies
$$
\|u-u^*\|_{L^\infty}\leq C(M+N)\,.
$$
\end{remark}

We conclude this section with the following rigidity Lemma for affine
functions, which states 
that if an affine function on a square is close to the
set of integers, then it is close to a single integer. 
\begin{lemma}\label{lemmaaffineintegers}
There is a constant $\delta>0$ such that the
following holds: 
For every $Q=(-\ell,\ell)^2$, with $\ell>0$, every $A\in
\R^{N\times 2}$, $b\in 
\R^N$, if
\begin{equation}\label{eqdeltaaxbzn}
\frac{1}{\ell^2}\| \dist( Ax+b,\Z^N)\|_{L^1(Q)} \le \delta
\end{equation}
then there is $z\in \Z^N$ such that
\begin{equation*}
\| Ax+b-z\|_{L^1(Q;\R^N)} = \| \dist( Ax+b,\Z^N)\|_{L^1(Q)}\,.
\end{equation*}
\end{lemma}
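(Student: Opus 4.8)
The plan is to prove the statement by a compactness/contradiction argument after a scaling reduction. First I would rescale: substituting $x=\ell y$ turns the affine map $Ax+b$ on $Q=(-\ell,\ell)^2$ into $(\ell A)y+b$ on the fixed square $(-1,1)^2$, turns the hypothesis $\ell^{-2}\|\dist(Ax+b,\Z^N)\|_{L^1(Q)}\le\delta$ into $\|\dist((\ell A)y+b,\Z^N)\|_{L^1((-1,1)^2)}\le\delta$, and transforms the conclusion in exactly the same way; so it is enough to produce an absolute constant $\delta>0$ that works on $Q=(-1,1)^2$. Assume no such $\delta$ exists. Then there are matrices $A_j\in\R^{N\times 2}$ and vectors $b_j\in\R^N$ with
\begin{equation*}
\eta_j:=\int_{(-1,1)^2}\dist(A_jx+b_j,\Z^N)\,dx\ \le\ 1/j
\end{equation*}
for which no single $z\in\Z^N$ satisfies $\|A_jx+b_j-z\|_{L^1}=\|\dist(A_jx+b_j,\Z^N)\|_{L^1}$. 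Subtracting a suitable lattice vector from $b_j$ changes neither $\eta_j$ nor this property, so I may and will assume $b_j\in[0,1)^N$.

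Next I would show that the $A_j$ are bounded. For any row $a\in\R^2$ of a matrix $A$ and the corresponding entry $\beta$ of $b$ one has $\dist(Ax+b,\Z^N)\ge\dist(a\cdot x+\beta,\Z)$ pointwise. If $|a|\ge\sqrt 2$, rotating coordinates so that $a$ is parallel to an axis (the rotated square still contains $(-1/\sqrt2,1/\sqrt2)^2$) and using that $t\mapsto\dist(t,\Z)$ is $1$-periodic with mean $1/4$ over a period gives
\begin{equation*}
\int_{(-1,1)^2}\dist(a\cdot x+\beta,\Z)\,dx\ \ge\ \sqrt2\int_{-1/\sqrt2}^{1/\sqrt2}\dist(|a|t+\beta,\Z)\,dt\ \ge\ \frac{\sqrt2}{4}\Bigl(\sqrt2-\frac{1}{|a|}\Bigr)\ \ge\ \frac14\,.
\end{equation*}
Since $|A|\le\sqrt N\,\max_k|a^{(k)}|$ (with $a^{(k)}$ the rows of $A$), this shows that $|A_j|\ge\sqrt{2N}$ would force $\eta_j\ge 1/4$; as $\eta_j\to0$, we conclude $|A_j|<\sqrt{2N}$ for all large $j$.

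Finally I would pass to a subsequence along which $A_j\to A_*$ and $b_j\to b_*$ (Bolzano–Weierstrass, using $|A_j|<\sqrt{2N}$ and $|b_j|\le\sqrt N$). Then $A_jx+b_j\to A_*x+b_*$ uniformly on $\overline Q$, hence $\dist(A_jx+b_j,\Z^N)\to\dist(A_*x+b_*,\Z^N)$ uniformly, so $\int_Q\dist(A_*x+b_*,\Z^N)\,dx=\lim_j\eta_j=0$. Thus $A_*x+b_*\in\Z^N$ for a.e.\ $x$, and since $x\mapsto A_*x+b_*$ is continuous, $\overline Q$ is connected and $\Z^N$ is discrete, this function is constant, equal to some $z_0\in\Z^N$; i.e.\ $A_*=0$ and $b_*=z_0$. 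But then, for $j$ large, $\|A_jx+b_j-z_0\|_{L^\infty(\overline Q)}\le\sqrt2\,|A_j|+|b_j-z_0|<1/2$, so $A_jx+b_j-z_0\in(-1/2,1/2)^N$ for every $x\in\overline Q$; on that cube the nearest-point projection to $\Z^N$ is $z_0$, so $\dist(A_jx+b_j,\Z^N)=|A_jx+b_j-z_0|$ pointwise. Integrating over $Q$ shows that $z=z_0$ realizes the distance, contradicting the choice of $(A_j,b_j)$. The rescaling and the periodic-integral estimate are routine; the only genuinely quantitative point, and the one I expect to require the most care, is the boundedness of $A_j$ — a priori the $A_j$ are unconstrained, and one must rule out the possibility that $Ax+b$ oscillates rapidly in some coordinate direction, which is exactly what the row-wise lower bound prevents. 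Everything after that is soft.
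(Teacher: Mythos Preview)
Your proof is correct, but it follows a different route from the paper's. The paper argues directly and constructively: it fixes the pointwise nearest-point map $w:Q\to\Z^N$, reduces componentwise to $N=1$, and shows that if $w$ is not constant then there is a point $\bar x$ with $\dist(A\bar x+b,\Z)=1/2$; the Lipschitz property of the distance then gives a definite lower bound on the $L^1$ integral (with a tiling argument to handle large $|A|$), producing an explicit $\delta$ of order $1/512$. Your argument instead rescales to $\ell=1$, uses a periodic-integral lower bound to force $|A_j|$ bounded, and then runs a soft compactness-contradiction: a subsequential limit must be a constant lattice point $z_0$, and uniform convergence forces $z_0$ to realize the distance for large $j$. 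The paper's approach yields a concrete constant and avoids any limiting procedure; your approach is shorter and avoids the case split on the size of $|A|$, at the cost of losing the explicit $\delta$. Since the lemma is only used qualitatively later (the value of $\delta$ never matters beyond being positive), nothing is lost.
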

\begin{proof}
Let $w:Q\to\Z^N$ be such that $\dist( Ax+b,\Z^N)=|Ax-b-w|$
pointwise. We claim that for an appropriate $\delta$ the condition
(\ref{eqdeltaaxbzn}) implies that $w$ is constant. 
To prove this, it suffices to show that any component is constant. Since 
$\dist((Ax+b)_i, \Z)\le 
\dist(Ax+b, \Z^N)$, it suffices to consider the case $N=1$.

Assume that $w$ is not constant. Then there is $\bar x\in Q$ such that
$|A\bar x+b-w|=1/2$. Since $x\mapsto \dist(Ax+b,\Z)$ is $|A|$-Lipschitz, we have
\begin{equation*}
\dist(Ay+b,\Z)\ge \frac12 - |A| \, |\bar x-y|\,.
\end{equation*}
Let $r=1/(4|A|\sqrt2)$, and assume $\bar x\in (0,\ell/2)^2$ (otherwise a
few signs have to be changed). 
On $\bar x+(0,r)^2$ we have
$\dist(Ax+b,\Z)\ge 1/4$. Now if $r\ge \ell/4\sqrt 2$ we have 
\begin{equation*}
\int_{Q} \dist(Ax+b,\Z)\, dx \ge
\int_{\bar x+(0,\ell/4\sqrt 2)^2} \dist(Ax+b,\Z)\, dx \ge
\frac{\ell^2}{32} \frac{1}{4}
\end{equation*}
and the proof is concluded (with $\delta< 1/128$).

Otherwise, set $R=1/|A|=4\sqrt2 r<\ell$. Choose at least
$ \frac{1}{4}\ell^2/R^2$ 
disjoint squares of side $2R$ contained in $Q$. Let $q=y+(-R,R)^2$ be one of them. 
Since $|A|R=1$, there is $\bar x\in y+(-R,0)^2$ such that
$\dist(A\bar x+b,\Z)=1/2$. Since $r=\frac{R}{4\sqrt 2}$, argueing as above, we obtain
\begin{equation*}
\int_{q} \dist(Ax+b,\Z)\, dx \ge \int_{\bar x+(0,R/4\sqrt 2)^2} \dist(Ax+b,\Z)\, dx \ge \frac{R^2}{32} \frac{1}{4}\,.
\end{equation*}
Summing over all squares the thesis follows with $\delta$ reduced by $1/4$.
\end{proof}

\section{Local approximation by one-dimensional functions}
\label{local1D}
Our next goal is to approximate functions with well-controlled energy by
one-dimensional functions. We first state the result and then explain the
meaning of the different quantities involved in the statement. 
Here and below we use the euclidean norm and scalar product for matrices,
i.e., $A\cdot B=\Tr A^TB$ and $|A|^2=\Tr A^TA$. For notational simplicity we
focus on  $W^{1,1}$ (resp. $L^1$) functions, the arguments in this section 
also hold in the case of $BV$ functions (resp. measures).

\begin{theorem}\label{propmakerankone}
Let $\ell>0$, $Q=(-\ell,\ell)^2$, $A\in \R^{N\times 2}$ with $|A|=1$,
$u\in W^{1,1}(Q;\R^N)$, and define 
\begin{alignat*}{1}
\eta_1 &= \frac{1}{\ell^2} \|\dist(u,\Z^N)\|_{L^1(Q)}\,,\\
\eta_2 &= \frac{1}{\ell} \|Du\|_{L^1(Q;\R^{N\times 2})}\,,\\
\eta_3 &= \frac{1}{\ell} \|Du-A(A\cdot Du)_+\|_{L^1(Q;\R^{N\times 2})}\,.
\end{alignat*}
Assume $\eta_1\le \delta/2$, $\delta$ being
as in Lemma \ref{lemmaaffineintegers}.
Then there are $a, b\in \R^N$,
$\nu\in S^1$, $\lambda\in
W^{1,1}(\R;\R)$, nondecreasing, such that the function
\begin{alignat*}{1}
\tilde u(x) = a\lambda (x\cdot\nu) +b
\end{alignat*}
obeys
\begin{alignat*}{1}
\frac{1}{\ell} \|u-\tilde u\|_{L^2(q;\R^N)} \le c
\eta_2^{2/3}\eta_3^{1/3}+c \eta_2\eta_3^{1/2}+c\eta_1
\end{alignat*}
and 
\begin{equation*}
\|a \lambda\|_{L^\infty(\R;\R^N)} \le c \eta_2\,.
\end{equation*}
Here $q=(-\ell/4,\ell/4)^2$, and the constant $c$ depends only on the
dimension $N$. 
\end{theorem}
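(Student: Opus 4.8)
\emph{Overall strategy.} The smallness of $\eta_3$ says that, up to an $L^1$ error of size $\ell\eta_3$, the gradient $Du$ has the special form $A\rho$ with $\rho\ge 0$ scalar; the $2$--dimensional geometry of $A$ then forces $u$ to be essentially one--dimensional, the only obstruction being that $A$ may be genuinely of rank two, a situation ruled out at leading order by the rigidity Lemma~\ref{lemmaaffineintegers} and the hypothesis $\eta_1\le\delta/2$. \emph{Step 1 (reduction to $Du\approx A\rho$).} Put $\rho:=(A\cdot Du)_+\ge 0$, so $A\rho=A(A\cdot Du)_+$ and, by definition of $\eta_3$, $\|Du-A\rho\|_{L^1(Q)}=\ell\eta_3$. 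Since $|A|=1$ we get $|(A\cdot Du)_+|\le|Du|$, hence $\eta_3\le 2\eta_2$ and $\|\rho\|_{L^1(Q)}=\|A\rho\|_{L^1(Q)}\le\|Du\|_{L^1(Q)}+\ell\eta_3\le 3\ell\eta_2$.

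\emph{Step 2 (the non--degenerate estimate).} Write the singular value decomposition $A=\sigma_1 w_1v_1^T+\sigma_2 w_2v_2^T$ with $w_1,w_2\in\R^N$, $v_1,v_2\in S^1$ orthonormal, $\sigma_1^2+\sigma_2^2=1$, $\sigma_1\ge\sigma_2\ge 0$ (so $\sigma_1\ge 1/\sqrt2$). Take $\nu:=v_1$, write $(t,s)$ for $(x\cdot\nu,x\cdot\nu^\perp)$, and decompose $u=p\,w_1+q\,w_2+u^\perp$ with $p:=w_1\cdot u$, $q:=w_2\cdot u$ and $u^\perp$ orthogonal to $\mathrm{span}(w_1,w_2)$. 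From Step~1, $\|\nabla u^\perp\|_{L^1}$, $\|\partial_s p\|_{L^1}$, $\|\nabla p-\sigma_1\rho\,\nu\|_{L^1}$, $\|\partial_t q\|_{L^1}$, $\|\nabla q-\sigma_2\rho\,\nu^\perp\|_{L^1}$ are all $\le\ell\eta_3$, so in particular $\|\partial_s q\|_{L^1}\le\sigma_2\|\rho\|_{L^1}+\ell\eta_3\le c\ell(\sigma_2\eta_2+\eta_3)$. Let $P(t)$ be the average of $p$ on the slice $\{x\cdot\nu=t\}$ and $R(s)$ that of $q$ on $\{x\cdot\nu^\perp=s\}$. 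The anisotropic Gagliardo--Nirenberg inequality $\|f\|_{L^2(\R^2)}\le c\,\|\partial_1 f\|_{L^1}^{1/2}\|\partial_2 f\|_{L^1}^{1/2}$ (applied on $q=(-\ell/4,\ell/4)^2$, after reflection extension, to the relevant functions with vanishing mean) gives
\begin{equation*}
\tfrac1\ell\|p-P(x\cdot\nu)\|_{L^2(q)}\le c(\eta_2\eta_3)^{1/2},\qquad
\tfrac1\ell\|q-R(x\cdot\nu^\perp)\|_{L^2(q)}\le c\big((\sigma_2\eta_2+\eta_3)\eta_3\big)^{1/2},
\end{equation*}
while $\tfrac1\ell\|u^\perp-c_0\|_{L^2(q)}\le c\eta_3$ for a suitable constant $c_0$ by the plain embedding $W^{1,1}(\R^2)\hookrightarrow L^2$. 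Since $P'$ is the slice average of $\partial_t p\approx\sigma_1\rho\ge 0$, we have $\|(P')_-\|_{L^1}\le c\eta_3$, so replacing $P$ by the nondecreasing $\lambda(t):=\int^t(P')_+$ (normalised so $\inf\lambda=0$) costs $c\eta_3$ in $L^\infty$; moreover $R$ has oscillation $\|R'\|_{L^1}\le c(\sigma_2\eta_2+\eta_3)$, so $q\,w_2$ and $u^\perp$ can be absorbed into a constant vector $b$. Setting $a:=w_1$, $\tilde u(x):=a\lambda(x\cdot\nu)+b$ we get $\tfrac1\ell\|u-\tilde u\|_{L^2(q)}\le c(\eta_2\eta_3)^{1/2}+c\,\sigma_2\eta_2+c\eta_3$, and $\|a\lambda\|_{L^\infty}=\|\lambda\|_{L^\infty}=\mathrm{osc}\,\lambda\le c\,\tfrac1\ell\|\partial_t p\|_{L^1(Q)}\le c\eta_2$.

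\emph{Step 3 (degenerate case and the choice of threshold).} The term $\sigma_2\eta_2$ above is harmless only when $\sigma_2$ is small; if $\sigma_2$ is not small we use $\nabla\times\nabla u_i=0$. From $\nabla p\approx\sigma_1\rho\,\nu$ and $\nabla q\approx\sigma_2\rho\,\nu^\perp$, the curl--free condition shows that $\partial_{\nu^\perp}\rho$ and $\partial_\nu\rho$ are curls of $L^1$ fields of sizes $c\ell\eta_3$ and $c\ell\eta_3/\sigma_2$, i.e.\ small in a negative Sobolev norm; converting this into an $L^1$ bound for $\rho-\bar\rho$ — the point where the failure of $L^1$ elliptic estimates forces a loss — yields $\tfrac1\ell\|\rho-\bar\rho\|_{L^1(q)}\le c\eta_3/\sigma_2^2$, hence $\|Du-\bar\rho A\|_{L^1(q)}\le c\ell\eta_3/\sigma_2^2$ and, by $W^{1,1}\hookrightarrow L^2$, $u$ is within $c\ell\eta_3/\sigma_2^2$ in $L^2(q)$ of the affine map $u_{\mathrm{aff}}(x)=\bar\rho Ax+b$. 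If $\eta_1+c\eta_3/\sigma_2^2\le\delta$ then $\tfrac1{\ell^2}\|\dist(u_{\mathrm{aff}},\Z^N)\|_{L^1(q)}\le\delta$ and Lemma~\ref{lemmaaffineintegers} gives $z\in\Z^N$ with $\|u_{\mathrm{aff}}-z\|_{L^1(q)}=\|\dist(u_{\mathrm{aff}},\Z^N)\|_{L^1(q)}$; comparing the $L^1$ norm of the affine function $u_{\mathrm{aff}}-z$ with its linear part (of modulus $\bar\rho$, using $\sigma_1\ge 1/\sqrt2$) forces $\bar\rho\le c(\eta_1+\eta_3/\sigma_2^2)/\ell$, hence $\eta_2\le c(\eta_1+\eta_3/\sigma_2^2)$, and taking $\tilde u\equiv z$ ($a=0$) gives $\tfrac1\ell\|u-\tilde u\|_{L^2(q)}\le c(\eta_1+\eta_3/\sigma_2^2)$. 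We now dichotomise at a threshold $\theta$: for $\sigma_2\le\theta$ Step~2 gives error $\le c\theta\eta_2+c(\eta_2\eta_3)^{1/2}+c\eta_3$, and for $\sigma_2>\theta$ the above gives error $\le c\eta_1+c\eta_3/\theta^2$ (valid as long as $\eta_3/\theta^2\le\delta$). Choosing $\theta\sim(\eta_3/\eta_2)^{1/3}$ balances $\theta\eta_2\sim\eta_3/\theta^2\sim\eta_2^{2/3}\eta_3^{1/3}$ and is admissible exactly when $\eta_2^{2/3}\eta_3^{1/3}\lesssim\delta$, giving the bound $c\eta_2^{2/3}\eta_3^{1/3}+c\eta_1$ (note $(\eta_2\eta_3)^{1/2}\le c\eta_2^{2/3}\eta_3^{1/3}$ and $\eta_3\le c\eta_2^{2/3}\eta_3^{1/3}$, since $\eta_3\le 2\eta_2$); in the complementary range one uses the larger admissible threshold $\theta\sim\eta_3^{1/2}$, for which Step~2 contributes $\le c\eta_2\eta_3^{1/2}+\dots$ and the degenerate estimate stays bounded, and there $\eta_1$ and $O(1)$ are dominated by $\eta_2^{2/3}\eta_3^{1/3}$. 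In all cases $\tfrac1\ell\|u-\tilde u\|_{L^2(q)}\le c\eta_2^{2/3}\eta_3^{1/3}+c\eta_2\eta_3^{1/2}+c\eta_1$, with the $L^\infty$ bound on $a\lambda$ as in Step~2 (trivial when $a=0$).

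\emph{Main obstacle.} The delicate step is the degenerate case: passing from ``$\partial_\nu\rho,\partial_{\nu^\perp}\rho$ small in a negative norm'' to ``$\rho$ close to a constant in $L^1$'' is a Calder\'on--Zygmund--type estimate that is false on $L^1$, so the naive size $\eta_3/\sigma_2$ is unreachable and one must settle for $\eta_3/\sigma_2^2$; after optimising the threshold this is precisely what produces the exponent $2/3$, and at the edge of the range of Lemma~\ref{lemmaaffineintegers} the additional term $\eta_2\eta_3^{1/2}$. Careful bookkeeping of the powers of $\ell$ and of which estimates survive on $L^1$ versus requiring the anisotropic interpolation of Step~2 is the bulk of the remaining work.
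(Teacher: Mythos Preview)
Your overall architecture matches the paper's: produce two competing estimates (a one-dimensional approximation controlled by $\sigma_2\eta_2$, and an affine approximation valid for $\sigma_2$ bounded below) and balance them to obtain the exponent $\eta_2^{2/3}\eta_3^{1/3}$. Your Step~2 is essentially the paper's Lemma~\ref{lemmapoincareoned}, phrased via the anisotropic Gagliardo--Nirenberg inequality; in fact by keeping the full SVD instead of first replacing $A$ by the rank-one $\alpha\otimes\nu$ you obtain the contribution $\sigma_2\eta_2$ rather than the paper's $\sigma_2^{1/2}\eta_2$, which is sharper.

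The gap is in Step~3. You assert that control of $\partial_\nu\rho$ and $\partial_{\nu^\perp}\rho$ in a ``negative Sobolev norm'' (each being the curl of an $L^1$ field) can be ``converted'' into $\tfrac{1}{\ell}\|\rho-\bar\rho\|_{L^1(q)}\le c\eta_3/\sigma_2^2$, but you give no mechanism for this conversion, and it is precisely here that the absence of $L^1$ Calder\'on--Zygmund estimates bites: knowing $\nabla\rho\in W^{-1,1}$ does \emph{not} yield $\rho\in L^1$ with the naive bound, and there is no obvious substitute that produces the claimed power. The paper avoids this entirely. It never tries to control $\rho$; instead it observes (Proposition~\ref{propkornpoincare}) that after the substitution $v_1=a_1u_2$, $v_2=-a_2u_1$ the hypothesis $Du\approx A\rho$ becomes smallness of the \emph{symmetric} part of $Dv$, and then the Korn--Poincar\'e inequality (equivalently the embedding $BD\hookrightarrow L^2$) gives directly
\[
\|u-\bar\xi Ax-b\|_{L^2(Q)}\le C\frac{a_1}{a_2}\,\ell\eta_3\le \frac{C}{\sigma_2}\,\ell\eta_3,
\]
a clean, established estimate with only a single power of $1/\sigma_2$. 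Combined with Lemma~\ref{lemmaaffineintegers} this yields the dichotomy: either $\|u-b\|_{L^2}\le C\ell(\eta_3/\sigma_2+\eta_1)$, or $\sigma_2\le C\eta_3$. The remaining balance is as you describe. I recommend replacing your $\rho$-based argument by this Korn--Poincar\'e step; once you have it, the rest of your proof goes through (and, incidentally, your sharper Step~2 together with the $1/\sigma_2$ loss would even give $(\eta_2\eta_3)^{1/2}$ in the first regime, which is stronger than what the theorem asks).
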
 
Here and below, $a_\pm=\max\{\pm a, 0\}$.

Note that the quantities $\eta_1$ and $\eta_2$ can be controlled by the
energy. In contrast the quantity $\eta_3$ is small whenever the $L^1$ norm of
a suitable mollification (on scale $l$) of $Du$ almost agrees with the $L^1$
norm of $Du$ 
(see Lemma \ref{lemma1dlocal} below).
We will see in Section~\ref{iterativemoll} that this property
holds for many scales. 

Before presenting the proof we discuss how this fundamental ingredient of our
construction can be made quantitative.
Since the $L^1$ norm is not strictly convex, the norm of a function
$f\in L^1(\R;[0,\infty))$  is the same as the norm of any
mollification, $\|f\|_{L^1(\R)}=\|f\ast\varphi\|_{L^1(\R)}$. The same,
however, does not hold for functions without a sign, or for vectorial
functions. The next lemma makes this quantitative, in a localized
way. We assert that if mollification does not decrease the $L^1$ norm
of a function substantially, then the function $f:\R^n\to\R^p$ is approximately
scalar, in the sense that there is a vector $\nu\in S^{p-1}$ such that $f$ is
close to the line $\nu[0,\infty)$. 

We shall apply this Lemma to the gradient of $u$, i.e., with $f=Du$,
$n=2$, $p=2N$, and the direction $\nu$ shall be an $N\times 2$ matrix.
\begin{lemma}\label{lemma1dlocal}\sloppypar
Let $f\in L^1_\loc(\R^n; \R^p)$, $\psi\in C_c(B_1,[0,\infty))$ 
be such that 
$\psi\ge 1$ on $B_{1/2}(0)$ and $\int_{B_1}\psi dx=1$,  and let  $\psi_r(x) = r^{-n}\psi(x/r)$.
Set $Q=(-r/2^{2+n/2},r/2^{2+n/2})^2$ and 
\begin{equation*}
\eta= \int_{\R^n} |f|
(\chi_{Q}\ast \psi_r)dx - \int_{Q} |f\ast \psi_r| dx \,.
\end{equation*}
Then the function $f$ is approximately
scalar, in the sense that there is $\nu\in S^{p-1}$ such that
\begin{equation*}
\int_{Q}( |f|-f\cdot \nu) dx \le c \eta
\end{equation*}
and
\begin{equation*}
\int_{Q} |f - \nu (f\cdot \nu)_+| dx \le c
\|f\|_{L^1(Q;\R^p)}^{1/2} \eta^{1/2} \,.
\end{equation*}
\end{lemma}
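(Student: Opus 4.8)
The plan is to reduce the whole statement to a one-point argument. First I would use Fubini's theorem to rewrite $\eta$ as the $L^1$-defect of the triangle inequality for the mollified map: writing the mollification as $(S_rg)(y)=\int_{\R^n}g(x)\,\psi_r(x-y)\,dx$ (the convention for which $\int_Q S_rg\,dy=\int_{\R^n}g\,(\chi_Q\ast\psi_r)\,dx$ and $S_rf=f\ast\psi_r$ on $Q$), one gets
$$\eta=\int_Q\bigl((S_r|f|)(y)-|(S_rf)(y)|\bigr)\,dy\,,$$
and the integrand is nonnegative since $|S_rf|\le S_r|f|$ pointwise. The crucial (and only) geometric observation is that $Q$ has been chosen so small that $\diam Q=\tfrac r2\,(n/2^{\,n})^{1/2}<\tfrac r2$ (using $n<2^n$); since $\psi\ge1$ on $B_{1/2}$ this forces $\psi_r(x-y)\ge r^{-n}$ for all $x,y\in Q$.

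Second, I would fix a point $y_0\in Q$ with $(S_r|f|)(y_0)-|(S_rf)(y_0)|\le\eta/\calL^n(Q)$, and let $\nu\in S^{p-1}$ be a unit vector with $(S_rf)(y_0)\cdot\nu=|(S_rf)(y_0)|$ (the direction of $(S_rf)(y_0)$, chosen arbitrarily if it vanishes). Then the first assertion is a single chain of inequalities: since $|f(x)|-f(x)\cdot\nu\ge0$ and $\psi_r(\,\cdot\,-y_0)\ge r^{-n}\chi_Q$,
$$\frac{\eta}{\calL^n(Q)}\ \ge\ (S_r|f|)(y_0)-(S_rf)(y_0)\cdot\nu=\int_{\R^n}\bigl(|f(x)|-f(x)\cdot\nu\bigr)\,\psi_r(x-y_0)\,dx\ \ge\ r^{-n}\!\int_Q\bigl(|f|-f\cdot\nu\bigr)\,dx\,,$$
i.e. $\int_Q(|f|-f\cdot\nu)\,dx\le c\,\eta$ with $c=r^n/\calL^n(Q)$ a dimensional constant. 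Note that this estimate is linear in $\eta$, which is what makes the second one work.

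Third, for the second assertion I would split $f=(f\cdot\nu)\nu+f^\perp$ with $f^\perp\perp\nu$, and use the elementary identity
$$|f|-f\cdot\nu=\frac{|f^\perp|^2}{|f|+|f\cdot\nu|}+2(f\cdot\nu)_-\ \ge\ \frac{|f^\perp|^2}{2|f|}+2(f\cdot\nu)_-\,,$$
which together with the first assertion gives $\int_Q\frac{|f^\perp|^2}{|f|}\,dx\le 2c\,\eta$ and $\int_Q(f\cdot\nu)_-\,dx\le\tfrac c2\,\eta$. Since $f-\nu(f\cdot\nu)_+=f^\perp-(f\cdot\nu)_-\nu$, we have $|f-\nu(f\cdot\nu)_+|\le|f^\perp|+(f\cdot\nu)_-$; Cauchy--Schwarz yields $\int_Q|f^\perp|\,dx\le\bigl(\int_Q\frac{|f^\perp|^2}{|f|}\,dx\bigr)^{1/2}\bigl(\int_Q|f|\,dx\bigr)^{1/2}\le(2c\,\eta)^{1/2}\|f\|_{L^1(Q)}^{1/2}$, and the leftover term $\int_Q(f\cdot\nu)_-\,dx\le\tfrac c2\eta$ is absorbed by a dichotomy: if $\eta\le\|f\|_{L^1(Q)}$ then $\eta\le\eta^{1/2}\|f\|_{L^1(Q)}^{1/2}$, while if $\eta>\|f\|_{L^1(Q)}$ then already $\int_Q|f-\nu(f\cdot\nu)_+|\,dx\le2\|f\|_{L^1(Q)}\le2\eta^{1/2}\|f\|_{L^1(Q)}^{1/2}$ directly. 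In both cases one obtains the claimed bound.

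I do not expect a serious analytic obstacle here. The only points needing care are the Fubini bookkeeping that presents $\eta$ as the integral of a nonnegative pointwise defect, the observation that the mollifier kernel is bounded below on the whole of $Q\times Q$ (which is the sole purpose of the awkward constant in the definition of $Q$), and the two-term pointwise inequality combined with the weighted Cauchy--Schwarz — it is this last step that is responsible for the characteristic loss of a square root in the second estimate.
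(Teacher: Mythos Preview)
Your proof is correct and follows essentially the same route as the paper: rewrite $\eta$ as the $Q$-integral of the nonnegative pointwise defect $S_r|f|-|S_rf|$, pick a point $y_0$ where this defect is at most $\eta/\calL^n(Q)$, take $\nu$ as the direction of $(S_rf)(y_0)$, and use $\psi_r(\cdot-y_0)\ge r^{-n}\chi_Q$ to get the first estimate. The only cosmetic difference is in the second part: the paper uses the single pointwise inequality $|f-\nu(f\cdot\nu)_+|=(|f|^2-(f\cdot\nu)_+^2)^{1/2}\le 2|f|^{1/2}(|f|-(f\cdot\nu)_+)^{1/2}$ followed by H\"older, whereas you split $f-\nu(f\cdot\nu)_+=f^\perp-(f\cdot\nu)_-\nu$ and handle the two pieces separately via the identity $|f|-f\cdot\nu=\frac{|f^\perp|^2}{|f|+|f\cdot\nu|}+2(f\cdot\nu)_-$ and weighted Cauchy--Schwarz; both arguments are equivalent.
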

\begin{proof} By scaling we can assume $r=1$.
For $x\in Q$, 
let $\nu(x)\in S^{p-1}$ be a unit vector parallel to $(f\ast \psi)(x)$, so
that
\begin{equation}
|f\ast \psi|(x)=\int_{\R^n} f(y)\cdot \nu(x) \psi(x-y) \, dy\,.
\end{equation}
We define $\tilde \eta:Q\to[0,\infty)$ by 
\begin{alignat*}{1}
\tilde \eta(x) &= \int_{\R^n} |f|(y) \psi(x-y) dy - 
|f\ast \psi|(x) \\
&= 
\int_{\R^n} \left( |f|(y) - f(y)\cdot \nu(x)\right) \psi(x-y) dy\,.
\end{alignat*}
The integrand is obviously nonnegative.
Since for $x,y\in Q$ we have $|x-y|\le 1/2$, it follows
that
\begin{equation*}
\tilde \eta(x)\ge \int_{Q} \left( |f|(y) - f(y)\cdot \nu(x)\right)
\, dy\geq
\int_{Q} \left( |f|(y) - (f(y)\cdot \nu(x))_+\right) \, dy\,.
\end{equation*}
But by the definition of $\tilde\eta$ we obtain
\begin{equation*}
\int_Q \tilde \eta(x) \, dx = \eta\,.
\end{equation*}
Therefore there is
at least a point $x\in Q$ such that $\tilde\eta(x)\le 2^{n+2}\eta$. Setting
$\nu=\nu(x)$ concludes the proof of the first part. 

To prove the second part we observe that
\begin{equation*}
|f - \nu (f\cdot \nu)_+| = ( |f|^2 - (f\cdot \nu)_+^2)^{1/2} \le 2 |f|^{1/2}
(|f| -(f\cdot \nu)_+)^{1/2}\,. 
\end{equation*}
Using H\"older's inequality we obtain the thesis.
\end{proof}

We now prove that if the gradient of a function is one-dimensional, in the
sense that it can be well approximated by a scalar multiple of a fixed
matrix, then either the matrix is almost rank-one or the function is
almost affine. As usual in this kind of inequalities, when working in
$W^{1,p}$ with $1<p<\infty$ we can obtain full
control in the same space, whereas in the for us most
relevant case $p=1$ one can only estimate the function $u$ in the
corresponding space $L^{1^*}=L^2$.
\begin{proposition}\label{propkornpoincare}
Let $\Omega\subset\R^2$ be a bounded Lipschitz domain, $1<
p<\infty$. For any 
$u\in W^{1,p}(\Omega;\R^N)$,
$A\in \R^{N\times 2}$ with ${\rm rank}\,A=2$, $\xi\in L^{p}(\Omega;\R)$ there is $\bar
\xi\in\R$ 
such that 
\begin{equation*}
\|Du - \bar\xi A \|_{L^{p}(\Omega;\R^{N\times 2})} \le C \frac{a_1}{a_2} \|Du - \xi A\|_{L^p(\Omega;\R^{N\times 2})}\,.
\end{equation*}
Further, for any $u\in W^{1,1}(\Omega;\R^N)$,
$A\in \R^{N\times 2}$, $\xi\in L^{1}(\Omega;\R)$ there are $\bar
\xi\in\R$ and $b\in\R^N$ such that 
\begin{equation*}
\|u(x) - \bar\xi A x - b\|_{L^2(\Omega;\R^{N\times 2})} \le C
\frac{a_1}{a_2} 
\|Du - \xi A\|_{L^1(\Omega;\R^{N\times 2})}\,.
\end{equation*}
Here $a_1\ge a_2> 0$ are the singular values of $A$, i.e., the
eigenvalues of $(A^TA)^{1/2}$.
The constant depends on $p$, $\Omega$ and $N$. 
\end{proposition}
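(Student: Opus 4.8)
The plan is to exploit that $Du$ is a gradient, hence curl-free, so that the defect $\operatorname{curl}(\xi A)$ is controlled by $\|Du-\xi A\|_{L^p}$. Writing out $\operatorname{curl}(\xi A)$ componentwise, its $i$-th entry is $A_{i2}\partial_1\xi-A_{i1}\partial_2\xi$, i.e.\ $\nabla\xi$ contracted against the right-angle rotation of the $i$-th row of $A$; since $\rank A=2$ these rows span $\R^2$, so the $N$ such quantities determine $\nabla\xi$, the inverse map having norm $1/a_2$. One thereby bounds $\nabla\xi$ in the negative Sobolev space $W^{-1,p}$, applies a Ne\v{c}as (negative-norm Poincar\'e) inequality to conclude that $\xi$ is close to its mean $\bar\xi$, and takes $\bar\xi A$ as the approximating matrix, the triangle inequality (with $|A|\le\sqrt2\,a_1$) producing the factor $a_1/a_2$. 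Throughout set $E:=Du-\xi A$.

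For $1<p<\infty$ this is the whole argument. Curl-freeness of $Du$ reads $\partial_1(Du)_{i2}=\partial_2(Du)_{i1}$, which after $Du=\xi A+E$ becomes $A_{i2}\partial_1\xi-A_{i1}\partial_2\xi=-(\partial_1E_{i2}-\partial_2E_{i1})$; the right-hand side has $W^{-1,p}$-norm at most $C\|E\|_{L^p}$. Collecting these $N$ identities and inverting the linear map $w\mapsto\big((A_{i2},-A_{i1})\cdot w\big)_{i}$ (injective, with smallest singular value $a_2$, being $A$ with each row rotated by a right angle) gives $\|\nabla\xi\|_{W^{-1,p}(\Omega)}\le (C/a_2)\|E\|_{L^p}$. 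As $\Omega$ is bounded Lipschitz, the Ne\v{c}as inequality yields $\|\xi-\bar\xi\|_{L^p(\Omega)}\le C(\Omega,p)\|\nabla\xi\|_{W^{-1,p}(\Omega)}$ with $\bar\xi:=|\Omega|^{-1}\int_\Omega\xi$, so that $\|Du-\bar\xi A\|_{L^p}\le\|E\|_{L^p}+|A|\,\|\xi-\bar\xi\|_{L^p}\le (C'a_1/a_2)\|E\|_{L^p}$, as claimed.

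For $p=1$ the Ne\v{c}as and Korn inequalities fail, so the estimate for $u$ in $L^{1^*}=L^2$ must instead use the two-dimensional embedding $W^{1,1}(\Omega)\hookrightarrow L^2(\Omega)$. I would first split $u=Ag+v$ with $g:=(A^TA)^{-1}A^Tu$ and $v:=(\Id-\Pi)u$, $\Pi:=A(A^TA)^{-1}A^T$ the orthogonal projection onto $\operatorname{range}A$: since $(\Id-\Pi)A=0$ one has $Dv=(\Id-\Pi)E$, hence $\|Dv\|_{L^1}\le\|E\|_{L^1}$ and the Sobolev--Poincar\'e inequality in $\R^2$ gives $\|v-\bar v\|_{L^2(\Omega)}\le C\|E\|_{L^1}$ with $\bar v:=|\Omega|^{-1}\int_\Omega v$. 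This reduces everything to the model statement: if $Dg=\xi\,\Id+\tilde E$ with $\xi\in L^1(\Omega)$ scalar and $\|\tilde E\|_{L^1}\le\|E\|_{L^1}/a_2$, then $\|g-\bar\xi x-\tilde b\|_{L^2(\Omega)}\le C\|\tilde E\|_{L^1}$ for suitable $\bar\xi\in\R$, $\tilde b\in\R^2$; together with the bound on $v$ and $|A|\le\sqrt2\,a_1$ this gives the claim with $b:=A\tilde b+\bar v$.

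The model statement is the crux, and this is where the main obstacle lies. Curl-freeness of $Dg$, combined with $\operatorname{curl}(\xi\,\Id)$ being a fixed rotation of $\nabla\xi$, forces $\nabla\xi$ to be a first derivative of $\tilde E$, so $\Delta g=\Div\tilde E+\nabla\xi=\Div M$ with $M$ an explicit algebraic combination of the entries of $\tilde E$ satisfying $|M|\le C|\tilde E|$. The Newtonian-potential representation of $g$ then places $g$ (up to a harmonic remainder absorbed into $\bar\xi x+\tilde b$) a priori only in weak-$L^2$, with quasinorm controlled by $\|\tilde E\|_{L^1}$ — the familiar failure of $\Delta^{-1}\Div\colon L^1\to L^2$ in two dimensions. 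The delicate point is that the particular form of $M$ dictated by the curl-free constraint excludes the ``monopole'' contribution responsible for that failure, so a careful potential-theoretic (or one-dimensional slicing) argument upgrades the bound to a genuine $L^2$ estimate. Everything else is bookkeeping with the singular values $a_1\ge a_2$.
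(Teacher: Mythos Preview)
Your $p>1$ argument via the Ne\v{c}as (negative-norm Poincar\'e) inequality is correct and is a legitimate alternative to the paper's route. The paper instead diagonalises $A$ by SVD to $A=a_1 e_1\otimes e_1+a_2 e_2\otimes e_2$, reduces to $N=2$, and applies the rotation trick $v=(a_1u_2,-a_2u_1)$: then the symmetric part of $Dv$ is pointwise controlled by $a_1|Du-\xi A|$, and Korn's inequality produces the skew-symmetric constant matrix, i.e.\ the $\bar\xi$. Ne\v{c}as and Korn are close cousins (each can be derived from the other), so the two approaches are morally the same; yours avoids the change of variables at the cost of citing a slightly less common inequality.

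For $p=1$ there is a genuine gap. Your reduction to the model statement ``$Dg=\xi\,\Id+\tilde E$ in $L^1$ implies $g$ is $L^2$-close to an affine conformal map'' is correct, but you do not prove it: the last paragraph is a sketch that ends with ``a careful potential-theoretic (or one-dimensional slicing) argument upgrades the bound,'' without saying what that argument is. The claimed special structure of $M$ (ruling out the ``monopole'') is precisely the content of the two-dimensional Korn--Poincar\'e inequality; there is no shortcut here. Indeed, your model statement \emph{is} the $BD\hookrightarrow L^2$ embedding in disguise: with $w=(g_2,-g_1)$ one computes that the symmetric part of $Dw$ is an explicit algebraic function of $\tilde E$, so $\|e(w)\|_{L^1}\le c\|\tilde E\|_{L^1}$, and the $BD$ embedding gives $\|w-Wx-c\|_{L^2}\le C\|\tilde E\|_{L^1}$ for some skew $W$ and constant $c$, which unwinds to your conclusion. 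This is exactly how the paper proceeds (for general $a_1,a_2$, without your intermediate projection step). So the missing ingredient in your $p=1$ argument is precisely the Korn--Poincar\'e / $BD\hookrightarrow L^2$ inequality; once you invoke it, you are done, but the potential-theoretic route you sketch does not provide an independent proof of it.
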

\begin{proof}
Set $\eta= \|Du - \xi A\|_{L^p(\Omega)}$ ($p=1$ in the second case).
By replacing $u$ with $\tilde u(x)=Qu(Rx)$, and $A$ with $\tilde
A=QAR$, with suitable $Q\in O(N)$, $R\in O(2)$, we
can assume $A$ to be diagonal, in the sense that $A=a_1e_1\otimes
e_1+a_2e_2\otimes e_2$, with $a_1\ge
a_2> 0$. For all $i=3, \dots N$ one has
\begin{equation*}
\|Du_i\|_{L^p}\le \eta
\end{equation*}
which implies the thesis for those components, hence it suffices to
treat the case $N=2$.
Define $v\in W^{1,p}(\Omega;\R^2)$ by 
\begin{equation*}
v_1(x) = a_1 u_2(x)\,,\hskip6mm
v_2(x) = -a_2 u_1(x)\,.
\end{equation*}
Then 
$(a_1 e_1\otimes e_2-a_2 e_2\otimes e_1)(Du-\xi A)=Dv - a_1a_2
\xi (e_1\otimes
e_2-e_2\otimes e_1)$, which implies
\begin{equation}\label{symmetric}
\left|\frac{Dv+Dv^T}{2}\right|\le \left|Dv - a_1a_2\xi (e_1\otimes
e_2-e_2\otimes e_1)\right| \le a_1 \left|Du -A\xi\right|
\end{equation}
pointwise. Therefore Korn's inequality shows that there is $\bar
\xi\in \R$ such that, for any $p>1$,
\begin{equation*}
\| Dv-a_1a_2\bar\xi (e_1\otimes
e_2-e_2\otimes e_1)\|_{L^p} \le C a_1 \left\|Du -
A\xi\right\|_{L^p}\,,
\end{equation*}
which in turn implies, using (\ref{symmetric}),
\begin{equation*}
a_1a_2 \|\xi-\bar\xi\|_{L^p} \le C a_1 \left\|Du -
A\xi\right\|_{L^p}\,.
\end{equation*}
Thus $\|\xi A-\bar\xi A\|_{L^p} \le C a_1 \|\xi-\bar\xi\|_{L^p} \le C
\frac{a_1}{a_2} \|Du-A\xi\|_{L^p}$, and the proof of the first part is concluded.

For $p=1$ the same estimates hold in weak-$L^1$, which does not embed
in $L^2$. However, from the Korn-Poincar\'e inequality (or the
embedding of $BD$ into $L^2$), one still has the existence of
$\bar\xi\in \R$ and 
$b\in \R^n$ such that
\begin{equation*}
\| v(x)-\bar\xi a_1a_2 (e_1\otimes
e_2-e_2\otimes e_1)x - b\|_{L^2} \le C a_1 \left\|Du -
A\xi\right\|_{L^1}\,,
\end{equation*}
which in turn implies
\begin{equation*}
\| u(x)-\bar\xi A x - b\|_{L^2} \le C \frac{a_1}{a_2} \left\|Du -
A\xi\right\|_{L^1}\,.
\end{equation*}
\end{proof}

Next we prove a Poincar\'e-type inequality where we only have half-sided
control on one component of the
gradient. We show that 
$u$ is close to an increasing function of one scalar variable alone.
\begin{lemma}\label{lemmapoincareoned}
Let $\ell>0$, $\nu\in S^1$, 
$Q_\nu^*=\{x: |x\cdot \nu|\le \ell, |x\cdot \nu^\perp|\le \ell/2\}$, $u\in
W^{1,1}(Q_\nu^*;\R)$. Set 
\begin{equation*}
\eta = \int_{Q_\nu^*} \left( |\partial_{\nu^\perp} u| +
|(\partial_\nu u)_-| \right) \, dx
\end{equation*}
and
$Q_\nu=\{x: |x\cdot \nu|\le \ell/2, |x\cdot \nu^\perp|\le \ell/2\}$
(see Figure \ref{fig:cogamu1}).
Then there is a nondecreasing function $h:\R\to\R$ such that
\begin{equation*}
\int_{Q_\nu} |u(x)-h(x\cdot \nu)|^2 \, dx \le C \eta |Du|(Q_\nu^*)
\end{equation*}
and 
\begin{equation*}
 \| h(x\cdot \nu) - \bar h\|_{L^\infty(Q_\nu)} \le \frac1\ell |Du|(Q_\nu^*)
\end{equation*}
for some $\bar h\in\R$.  
\end{lemma}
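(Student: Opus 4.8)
The plan is to split the argument into two stages: first control the tangential variation of $u$ to show that $u$ is close (in $L^2$) to a function $g$ of the single variable $x\cdot\nu$, then replace $g$ by its monotone rearrangement/envelope $h$ using the half-sided control on $\partial_\nu u$. Without loss of generality take $\nu=e_1$ so that $Q^*_\nu=(-\ell,\ell)\times(-\ell/2,\ell/2)$ and $Q_\nu=(-\ell/2,\ell/2)^2$; write $x=(t,s)$.

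For the first stage, for each fixed $t$ define $g(t)$ to be the average of $u(t,\cdot)$ over $(-\ell/2,\ell/2)$ (or a suitable truncation of $Q^*_\nu$). By the one-dimensional Poincar\'e inequality in the $s$-variable on each slice,
\[
\int_{-\ell/2}^{\ell/2} |u(t,s)-g(t)|^2\,ds \le C\ell \Bigl(\int_{-\ell/2}^{\ell/2}|\partial_s u(t,s)|\,ds\Bigr)
\cdot \sup_s |u(t,s)-g(t)|,
\]
and the sup is itself controlled by $\int |\partial_s u(t,\cdot)|$; integrating in $t$ and using Cauchy--Schwarz bounds $\int_{Q_\nu}|u-g(x\cdot\nu)|^2$ by $C\eta\,|Du|(Q^*_\nu)$, since $\int_{Q^*_\nu}|\partial_s u|\le\eta$. (One must be slightly careful that $g$ is defined on all of $(-\ell/2,\ell/2)$ and is itself $BV$ in $t$ with $|Dg|\le C|Du|(Q^*_\nu)/\ell^{1/2}$ or similar, but this is routine.) The $L^\infty$ bound $\|g-\bar g\|_\infty\le \ell^{-1}|Du|(Q^*_\nu)$ follows from writing $g(t)-g(t')$ as an integral of $\partial_t u$ over a rectangle of width $\ell$ and averaging.

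For the second stage I would use the hypothesis that $\int_{Q^*_\nu}|(\partial_\nu u)_-|\,dx\le\eta$ is small: this says $u$, hence $g$ (after the averaging, up to an error controlled by $\eta/\ell$), is \emph{almost nondecreasing} in $t$. Concretely, define $h(t)=\sup_{t'\le t} \tilde g(t')$ where $\tilde g$ is a version of $g$ corrected by subtracting the small decreasing part; then $h$ is nondecreasing by construction, and $\|h-g\|$ on $(-\ell/2,\ell/2)$ is controlled by the total decrease of $g$, which is $\int (\partial_t g)_- \le \ell^{-1}\int_{Q^*_\nu}(\partial_t u)_- + (\text{tangential error})\le C\eta/\ell$. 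Squaring and integrating over $Q_\nu$ gives an additional term $\le C\ell^2(\eta/\ell)^2=C\eta^2\le C\eta|Du|(Q^*_\nu)$ (using $\eta\le|Du|(Q^*_\nu)$), which is absorbed into the stated bound. The $L^\infty$ bound for $h$ is inherited from that of $g$ since passing to the running supremum does not increase oscillation beyond the decrease already accounted for.

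The main obstacle is the interplay in the second stage between the averaging in $s$ (which produced $g$) and the half-sided control, which is stated for $\partial_\nu u$ pointwise rather than for $\partial_t g$: one needs $(\partial_t g)(t) = \fint \partial_t u(t,s)\,ds$, so $(\partial_t g)_- \le \fint(\partial_t u)_-\,ds$ by convexity of $a\mapsto a_-$, and hence $\int(\partial_t g)_-\,dt \le \ell^{-1}\int_{Q^*_\nu}(\partial_\nu u)_-\,dx \le \eta/\ell$ — so in fact this step is clean once one commutes averaging with the negative-part operator. The only genuine care needed is keeping all geometric constants ($\ell$-powers) consistent so that every error term is bounded by $C\eta|Du|(Q^*_\nu)$ rather than, say, $C\eta^{1/2}|Du|(Q^*_\nu)^{1/2}$; this forces the specific choice of $g$ as a slice-average and the use of Cauchy--Schwarz exactly once.
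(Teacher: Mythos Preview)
Your first stage has a genuine gap. After the slice estimate you arrive at
\[
\int_{Q_\nu}|u-g|^2\,dx \le C\ell\int_{-\ell/2}^{\ell/2}\Bigl(\int_{-\ell/2}^{\ell/2}|\partial_s u(t,s)|\,ds\Bigr)^2 dt,
\]
and you then assert that ``integrating in $t$ and using Cauchy--Schwarz'' bounds this by $C\eta\,|Du|(Q^*_\nu)$. This step is false: both factors in your product bound depend on the \emph{same} variable $t$, so nothing factorises, and no Cauchy--Schwarz or H\"older inequality will turn $\int a_t^2\,dt$ into $(\int a_t)(\int b_t)$.

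Concretely, take $\ell=1$ and $u(t,s)=\phi(s)\psi(t)$, where $\phi\ge0$ is a bump of height $1/2$ and width of order $N^{-2}$ (so $\int|\phi'|=1$, $\int\phi\sim N^{-2}$), and $\psi\ge0$ is a bump of height $N$ and width $N^{-1}$ centred at $0$ (so $\int\psi\sim1$, $\int|\psi'|\sim N$). Then
\[
\eta=\int_{Q^*}|\partial_s u|+\int_{Q^*}(\partial_t u)_-\sim 1+N\cdot N^{-2}\sim 1,
\qquad |Du|(Q^*_\nu)\sim 1,
\]
while your intermediate quantity is $\int_{-1/2}^{1/2}\psi(t)^2\,dt\sim N$. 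So your bound reads $N\le C$, which fails. (The true $\int_{Q_\nu}|u-g|^2$ is of order $N^{-1}$ here, so the \emph{lemma} holds; it is your route that breaks.)

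The paper's proof avoids this by producing two pointwise bounds on $|u(x)-g(x\cdot\nu)|$ that depend on \emph{orthogonal} coordinates. One is your slice bound, $|u-g|\le\int_{I(x\cdot\nu)}|\partial_{\nu^\perp}u|\,d\calH^1$, a function of $t=x\cdot\nu$ alone. The other is obtained by integrating $u$ along the $\nu$-direction out to carefully chosen slices $t^{(1)}\in(-\ell,-\ell/2)$, $t^{(2)}\in(\ell/2,\ell)$ on which the tangential variation is small; this yields
\[
|u(x)-g(x\cdot\nu)|\le 3|Du|(Q^*_\nu)+\int_{[x^{(1)},x^{(2)}]}|(\partial_\nu u)_-|\,d\calH^1,
\]
a function of $s=x\cdot\nu^\perp$ alone. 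Multiplying these two and integrating over $Q_\nu$ then genuinely factorises into $(\int_{Q^*}|\partial_{\nu^\perp}u|)\cdot(3|Du|(Q^*)+\int_{Q^*}|(\partial_\nu u)_-|)\le 4\eta\,|Du|(Q^*_\nu)$. This is precisely why the domain $Q^*_\nu$ is taken longer in the $\nu$-direction than $Q_\nu$: the extra strips $|t|\in(\ell/2,\ell)$ are where the good end-slices $t^{(1)},t^{(2)}$ are found. Your second stage (replacing $g$ by a monotone $h$ via $h'=g'_+$ or a running supremum) is essentially correct and matches the paper.
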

\begin{figure}[t]
\centering
\includegraphics[width=.6\textwidth]{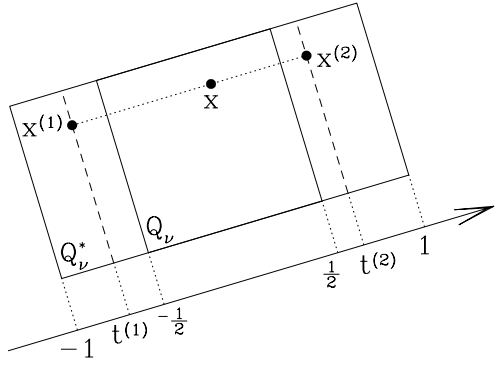}
\caption{Geometry in Lemma \ref{lemmapoincareoned}.}
\label{fig:cogamu1}
\end{figure}
\begin{proof}
By scaling we can assume $\ell=1$. Define $g:(-1,1)\to\R$ by
\begin{equation*}
g (t)= \int_{I(t)} u\, d\calH^1 
\end{equation*}
where $I(t) = Q_\nu^*\cap \{x: x\cdot \nu=t\}$, notice that
$\calH^1(I(t))=1$ for all $t\in(-1,1)$. For almost every $x\in
Q_\nu^*$ we have 
\begin{equation}\label{equglinfinity}
|u(x)-g(x\cdot \nu)| \le \int_{I(x\cdot \nu)} |\partial_{\nu^\perp} u|\,
d\calH^1 \,.
\end{equation}
Choose $t^{(1)}\in(-1,-1/2)$, $t^{(2)}\in(1/2,1)$ such that
\begin{equation}\label{eqsceltat1t2}
\int_{I(t^{(1)})\cup I(t^{(2)})} |\partial_{\nu^\perp} u| d\calH^1 \le 2\eta\,.
\end{equation}
We observe that $g\in W^{1,1}((-1,1))$, with
\begin{equation*}\label{eqgprimepoinc}
g'(t) = \int_{I(t)} \partial_\nu u \,d\calH^1\,,
\end{equation*}
which implies
\begin{equation*}
|g'_-|(t)\le \int_{I(t)} |(\partial_\nu u)_-|d\calH^1
\end{equation*}
for almost all $t\in(-1,1)$, and
\begin{equation*}
\int_{-1}^1 |g'| \, dt \le |Du|(Q_\nu^*)\,.
\end{equation*}
Therefore
\begin{equation*}
\int_{(-1,1)} |g'_-| (t) dt \le \eta\,.
\end{equation*}
For any $x\in Q_\nu$ we
set $x^{(2)}=t^{(2)}\nu + \nu^\perp (x\cdot\nu^\perp)= x +
(t^{(2)}-x\cdot \nu)\nu$, and estimate
\begin{alignat*}1
u(x)& = u(x^{(2)}) - \int_{[x,x^{(2)}]} \partial_\nu u\, d\calH^1\\
&\le g(t^{(2)}) + |u(x^{(2)})-g(t^{(2)})| + \int_{[x,x^{(2)}]}
|(\partial_\nu u)_-| \, d\calH^1 \,.
\end{alignat*}
As above, $[a,b]$ is the segment with endpoints $a$ and $b$. From
\begin{equation}\label{eqglinfty}
|g(x\cdot \nu)-g(t^{(2)})|\le \int_{[t_1,t_2]} |g'| dt \le |Du|(Q^*_\nu)\,,
\end{equation}
 (\ref{equglinfinity}), and (\ref{eqsceltat1t2}) we obtain
\begin{equation*}
|u(x^{(2)})-g(t^{(2)})|\le 2\eta\le 2 |Du|(Q^*_\nu)\,,
\end{equation*}
and therefore
\begin{alignat*}1
u(x)& \le g(x\cdot \nu) + 3 |Du|(Q^*_\nu)
+ \int_{[x,x^{(2)}]}
|(\partial_\nu u)_-| \, d\calH^1 \,.
\end{alignat*}
Analogously
\begin{alignat*}1
u(x)& \ge g(t^{(1)}) - |u(x^{(1)})-g(t^{(1)})| - \int_{[x^{(1)},x]}
|(\partial_\nu u)_-| d\calH^1 
\end{alignat*}
gives
\begin{alignat*}1
u(x)& \ge g(x\cdot \nu) - 3 |Du|(Q^*_\nu)
- \int_{[x^{(1)},x]}
|(\partial_\nu u)_-| \, d\calH^1 \,.
\end{alignat*}
We conclude that
\begin{equation*}
|u(x)-g(x\cdot\nu)| \le 3|Du|(Q^*_\nu) + \int_{[x^{(1)},x^{(2)}]}
|(\partial_\nu u)_-| 
d\calH^1 \,.
\end{equation*}
We multiply by (\ref{equglinfinity}) and integrate over $Q_\nu$, to obtain
\begin{equation*}
\int_{Q_\nu} |u(x)-g(x\cdot\nu)|^2 \, dx \le \left( \int_{Q_\nu^*}
|\partial_{\nu^\perp} u| dx \right) \left( 3|Du|(Q^*_\nu) + \int_{Q_\nu^*} |(\partial_{\nu} u)_-| dx \right) \,.
\end{equation*}
The second factor in the right-hand side can be controlled by $4|Du|(Q_\nu^*)$. 

Finally, we define $h:\R\to\R$ by $h(0)=g(0)$, $h' = g'_+$, and observe that
\begin{equation*}
\|h-g\|_{L^\infty(-1/2,1/2)}\le \int_{[-1,1]} |g'_-| dt \le \eta\,,
\end{equation*}
which concludes the proof of the first inequality. The uniform bound
follows from the definition of $h$ and 
 (\ref{eqgprimepoinc}).
\end{proof}

\begin{proof}[Proof of Theorem~\ref{propmakerankone}] Recall that
\begin{alignat*}{1}
\eta_1 &= \frac{1}{\ell^2} \|\dist(u,\Z^N)\|_{L^1(Q)}\,,\\
\eta_2 &= \frac{1}{\ell} \|Du\|_{L^1(Q)}\,,\\
\eta_3 &= \frac{1}{\ell} \|Du-A(A\cdot Du)_+\|_{L^1(Q)}\,,
\end{alignat*}
with $\eta_1\le \delta/2$, $\delta$ being
as in Lemma \ref{lemmaaffineintegers}, and that we have to show that there exists a function $\tilde u(x)=a\lambda (x\cdot\nu) +b$ (with $a, b\in \R^N$,
$\nu\in S^1$, $\lambda\in
W^{1,1}(\R;\R)$), such that
\begin{alignat*}{1}
\frac{1}{\ell} \|u-\tilde u\|_{L^2(q)} \le c
\eta_2^{2/3}\eta_3^{1/3}+c \eta_2\eta_3^{1/2}+c\eta_1
\end{alignat*}
and 
\begin{equation*}
\|a \lambda\|_{L^\infty(\R)} \le c \eta_2\,.
\end{equation*}

By scaling we can assume $\ell=1$; from $|A|=1$ one obtains 
$\eta_3\le\eta_2$.
Let $a_1\ge a_2\ge0$ be the singular values of $A$. 

The argument is based on obtaining two different estimates, and then choosing
one or the other depending on the value of $a_2$ relative to the $\eta_{1,2,3}$.

{\bf Step 1.}
Assume first $a_2>0$, i.e., $\rank A=2$.
Setting
$\xi=(A\cdot Du)_+$, from Proposition
\ref{propkornpoincare} and $|A|=1$ we have
\begin{equation*}
\|u(x) - \bar\xi A x - b\|_{L^2(Q)} \le C \frac{a_1}{a_2} \eta_3
\le C \frac{1}{a_2} \eta_3\,,
\end{equation*}
for some $\bar\xi\in \R$, $b\in \R^N$.
This implies 
\begin{equation*}
\|\dist(\bar\xi A x + b,\Z^N)\|_{L^1(Q)} \le c_* \frac{1}{a_2}
\eta_3+\eta_1\,. 
\end{equation*}
We distinguish two cases. If $c_*\eta_3\le \delta a_2/2$, then 
the right-hand side is less then $\delta$, and
by Lemma \ref{lemmaaffineintegers} there is $z\in \Z^N$ such that
\begin{equation*}
\|\bar\xi Ax -b -z\|_{L^1(Q)} \le c_* \frac{1}{a_2} \eta_3+\eta_1\,.
\end{equation*}
This immediately implies
\begin{equation*}
|\bar\xi A| \le C (\frac{1}{a_2} \eta_3+\eta_1)\,.
\end{equation*}
We conclude that at least one of the two inequalities
\begin{equation}\label{equmenobcaseone}
\|u-b\|_{L^2(Q)}\le C \frac{1}{a_2} \eta_3+C \eta_1
\end{equation}
or
\begin{equation}\label{eqa2upperbound}
a_2\le C'\eta_3
\end{equation}
holds. Here both constants may only depend on $N$.
It is clear that the same conclusion holds also in the 
remaining case $a_2=0$.

\begin{figure}[t]
\centering
\includegraphics[width=.6\textwidth]{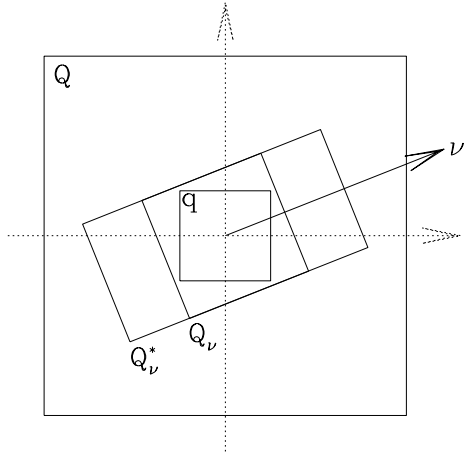}
\caption{Geometry in Step 2 of the proof of Proposition \ref{propmakerankone}.}
\label{fig:cogamu2}
\end{figure}

{\bf Step 2.}
Choose $\alpha,\alpha'\in S^{N-1}$, $\nu,\nu'\in S^1$ orthogonal and so that
$A=a_1\alpha\otimes \nu + a_2 \alpha'\otimes\nu'$, then
\begin{equation*}
|A - \alpha\otimes \nu|\le |1-a_1|+|a_2|\le 2a_2\,.
\end{equation*}
Writing $B=\alpha\otimes\nu$, we obtain
\begin{alignat*}{1}
|A(A\cdot Du)_+-B(B\cdot Du)_+|&\le |A-B|\, |(B\cdot Du)_+|+ |A|\,
|(A\cdot Du)_+-(B\cdot Du)_+|\\ 
& \le 2 |A-B| \, |Du|\,,
\end{alignat*}
which implies
\begin{alignat}1
\|Du -& \alpha\otimes \nu (\partial_\nu u\cdot \alpha)_+\|_{L^1(Q)}
\nonumber\\
&\le \|Du - A (A\cdot Du)_+\|_{L^1(Q)}+
\|A(A\cdot Du)_+-B(B\cdot Du)_+\|_{L^1(Q)} \nonumber\\
& \le
\eta_3+4{a_2} \eta_2\,.  \label{eqDuRankone}
\end{alignat}
Let $P_\alpha^\perp=\Id_N-\alpha\otimes \alpha$ be the projection on the space orthogonal
to $\alpha$. Since $P_\alpha^\perp(\alpha\otimes \nu)=0$ we deduce 
\begin{equation*}
\|D (P_\alpha^\perp u)\|_{L^1(Q)}\le \eta_3+4a_2\eta_2\,.
\end{equation*}
Therefore there is $b\in\R^N$, $b\cdot \alpha=0$, such that
\begin{equation}\label{eqPalphaperpub}
\|P_\alpha^\perp u-b\|_{L^2(Q)}\le c
\|D P_\alpha^\perp u\|_{L^1(Q)}
\le c\eta_3+c{a_2}\eta_2\,.
\end{equation}
The component $u\cdot \alpha$ is treated using Lemma \ref{lemmapoincareoned}. Indeed, with
the notation in that statement (using the present $\nu$, $\ell=1/\sqrt2$) we have
$q\subset Q_\nu\subset Q_\nu^*\subset Q$ (see Figure \ref{fig:cogamu2}). We
conclude together with (\ref{eqDuRankone}) that there is a 
monotone function $h$ such that
\begin{equation*}
\|(u\cdot \alpha)(x)-h(x\cdot \nu)\|_{L^2(q)} \le C ( \eta_3 + 
a_2\eta_2)^{1/2}  \eta_2^{1/2}\,.
\end{equation*}
Combining this with (\ref{eqPalphaperpub}) and dropping irrelevant terms we obtain 
\begin{equation}\label{eqfinalsteptwoumenoahb}
\|u(x)-\alpha h(x\cdot \nu)-b\|_{L^2(q)} \le 
c a_2^{1/2} \eta_2+
c \eta_2^{1/2}\eta_3^{1/2}\,. 
\end{equation}

We finally come back to the two cases we distinguished at the end of
Step 1. If (\ref{eqa2upperbound}) holds, then
(\ref{eqfinalsteptwoumenoahb}) becomes
\begin{equation*}
\|u(x)-\alpha h(x\cdot \nu)-b\|_{L^2(q)} \le 
c \eta_2 \eta_3^{1/2}+
c \eta_2^{1/2}\eta_3^{1/2}\,. 
\end{equation*}
In this case the proof is concluded.

Assume now that (\ref{eqa2upperbound}) does not hold. 
If $\frac{1}{a_2} \eta_3+\eta_1>  a_2^{1/2} \eta_2+
\eta_2^{1/2}\eta_3^{1/2}$ we set  $\tilde u(x)=\alpha h(x\cdot \nu)+b$, otherwise
$\tilde u=b$. From (\ref{equmenobcaseone}) and 
(\ref{eqfinalsteptwoumenoahb}) we then obtain 
\begin{alignat*}{1}
\|u-\tilde u\|_{L^2(q)}\le& c \min\left\{ 
\frac{1}{a_2} \eta_3+\eta_1,
a_2^{1/2} \eta_2+
\eta_2^{1/2}\eta_3^{1/2} \right\}\\
\le&
c \min\left\{ 
\frac{\eta_3}{a_2},
a_2^{1/2} \eta_2 \right\}
+c \eta_1+c
\eta_2^{1/2}\eta_3^{1/2}\,.
\end{alignat*}
But $\min\{\eta_3/a_2, a_2^{1/2}\eta_2\}\le \eta_3^{1/3}\eta_2^{2/3}$,
and we conclude
\begin{equation*}
\|u-\tilde u\|_{L^2(q)}\le c
\eta_2^{2/3}\eta_3^{1/3}+c \eta_1\,.
\end{equation*}
\end{proof}

\section{Control of the line energy with the truncated energy}
\label{control}

In the previous section we saw that functions with low energy (and small
differences $\|Du\|_{L^1}-\|\varphi * Du\|_{L^1}$) are well approximated by
one-dimensional functions. In Section~\ref{1D} we saw that for one-dimensional
functions the truncated energy is well approximated by the line energy. Now we
combine these results to obtain a global approximation: given a function $u\in
BV(\Omega; \R^N)$ and $\omega\subset\subset \Omega$ we construct a new
function $w\in BV(\omega;\Z^N)$ such that the relaxed line energy of $w$ 
\begin{equation}
E_0^\rel[w,\omega] = \int_{J_w\cap \omega} \gamma_0^\rel(\nu,[w]) \, d\calH^1
\end{equation}
is essentially controlled by the truncated energy of $u$
(we switch from $\gamma_0$ to the smaller $\gamma_0^\rel$, which has linear
growth, since  boundary terms are only controlled in $L^1$). 
We fix a mollifier $\varphi\in C_c(B_1;[0,\infty))$, with $\int_{\R^2}
\varphi\,dx=1$ and $\varphi\ge 1$ on $B_{1/2}(0)$. Let
$\varphi_h(x)=2^{2h}\varphi(2^hx)$.  

\begin{proposition}\label{propconstruction}
Let $\omega\subset\subset\Omega$ be two Lipschitz sets, 
$u\in W^{1,1}(\Omega;\R^N)$,
$M>1$, $h,t\in\N$ with $t\ge 3$, $\eta\in(0,1)$.
Assume $\dist(\omega,\partial\Omega)\ge 2^{-h+1}$.

Then there is $w=w_{M,h,t,\eta}\in BV(\omega;\Z^N)$ such that
\begin{alignat}{1}
(\ln 2) \int_{J_w\cap \omega} \gamma_0^\rel(\nu,[w]) \, d\calH^1 \le& 
(1+\eta+c 2^{-t}) p_{\Gamma_{h+t}, \Omega}(u) 
+\frac{C_M}\eta 2^{h+t} \|\dist(u,\Z^N)\|_{L^1(\Omega)}
\nonumber
\\
&+\frac{C_M}\eta 2^{t} A^{5/6} 
\left(|Du|(\Omega) - |D(u \ast\varphi_{h})|(\omega)\right)^{1/6}
\nonumber
\\ 
&+ \frac{c}{M^{1/2}} 2^{t/2} A
\label{eqpropconstrlongestimateenergy}
\end{alignat}
and
\begin{alignat}{1}
\|u-w\|_{L^1(\omega)}\le& \frac{c}{M^{1/2}} 2^{-h+t/2} A
+ C_M \|\dist(u,\Z^N)\|_{L^1(\Omega)}
\nonumber
\\
&
+ C_M 2^{-h} A^{2/3} 
\left(|Du|(\Omega) - |D(u \ast\varphi_{h})|(\omega)\right)^{1/3}\,.
\label{eqpropconstrlongestimateLone}
\end{alignat}
Here $A=\max\{|Du|(\Omega), p_{h+t,\Omega}(u)\}$.
\end{proposition}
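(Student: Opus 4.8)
The plan is to build the competitor $w$ by patching together local one-dimensional approximations on a grid of squares of size $\sim 2^{-h}$, using Theorem~\ref{propmakerankone} locally, then Lemma~\ref{lemmaZn} (and Lemma~\ref{lemmaaffineintegers}) to replace each local one-dimensional profile by an integer-valued one-dimensional profile, and finally Lemma~\ref{lemma1Dinterfacesseparate} together with Lemma~\ref{lemmagammaexplicit} to bound the line energy $\gamma_0^\rel$ of $w$ from above by the truncated energy $p_{\Gamma_{h+t},\Omega}(u)$. First I would tile $\omega$ by squares $Q_z$ of side $\sim 2^{-h}$ (with a slightly larger concentric square $Q_z^\sharp$ of side comparable to $2^{-h}$ still inside $\Omega$, which is why the hypothesis $\dist(\omega,\partial\Omega)\ge 2^{-h+1}$ is needed), and on each such square apply Lemma~\ref{lemma1dlocal} to $f=Du$ with $r\sim 2^{-h}$: the quantity $\eta$ there is controlled, after summation over $z$, by the global defect $|Du|(\Omega)-|D(u*\varphi_h)|(\omega)$, since $\chi_Q*\psi_r$ is comparable to the mollification kernel. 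This produces on each $Q_z$ a matrix $\nu_z$ (an $N\times 2$ matrix) and the estimate $\eta_3^{(z)}\lesssim \|Du\|_{L^1(Q_z^\sharp)}^{1/2}(\eta^{(z)})^{1/2}$; the quantities $\eta_1^{(z)},\eta_2^{(z)}$ are controlled respectively by $\|\dist(u,\Z^N)\|_{L^1(Q_z^\sharp)}$ and $|Du|(Q_z^\sharp)$ (after rescaling by the side length).

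Next, on each square where $\eta_1^{(z)}$ is below the threshold $\delta/2$, Theorem~\ref{propmakerankone} gives a one-dimensional function $\tilde u_z(x)=a_z\lambda_z(x\cdot\nu_z)+b_z$ with $\|a_z\lambda_z\|_{L^\infty}\lesssim\eta_2^{(z)}$ (this is where the factor $M$ enters: we only keep the one-dimensional structure on squares where $\eta_2^{(z)}\le M$-ish, i.e. where $|Du|(Q_z^\sharp)$ is not too large in the appropriate scaling; on the few "bad" squares where this fails we simply set $w$ equal to a nearby constant and absorb the error into the $M^{-1/2}$ terms, using a Chebyshev-type count of how many squares can be bad). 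On the good squares I would then invoke Lemma~\ref{lemmaZn} with the $L^\infty$ bound $\sim M$ to get $a_z^*,b_z^*\in\Z^N$ and $\lambda_z^*\in L^\infty(\R;\Z)$ with $\|\tilde u_z-u_z^*\|_{L^1(Q_z)}\lesssim \|\dist(u,\Z^N)\|_{L^1(Q_z^\sharp)}$, and Remark~\ref{remZn} keeps the $L^\infty$ defect bounded by $C(M+N)$. Defining $w$ piecewise as $u_z^*$ on $Q_z$ (with an outer layer or a short interpolation across square boundaries to make $w$ globally $BV$; the jumps created along the grid are $\calH^1$-small, of total length $\sim 2^{-h}\cdot(\#\text{squares})\sim$ bounded, times jump heights controlled by $M$, contributing a term of order $2^{t/2}A/M^{1/2}$ after one is careful about the scaling in $t$), one obtains the claimed $L^1$ estimate \eqref{eqpropconstrlongestimateLone} by summing the square-by-square bounds and using $\eta^{1/2}$-Hölder on the defect term to produce the exponent $1/3$.

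For the energy estimate \eqref{eqpropconstrlongestimateenergy} the idea is: $\gamma_0^\rel\le\gamma_0$ and on each good square $w=u_z^*$ is one-dimensional with direction $\nu_z$ and integer jumps, so Lemma~\ref{lemma1Dinterfacesseparate} (applied at layer $k=h+t$, so that $2^{-k}$ is much smaller than the square size $2^{-h}$, which forces $t\ge 3$) gives $\int_{J_w\cap q_z}\gamma_{1D}^{[w]\cdot\Gamma_{h+t}[w]/|\cdot|^2}(\nu_z)\,d\calH^1\le p_{\Gamma_{h+t},Q_z}(u_z^*)$, and by Lemma~\ref{lemmagammaexplicit} the left side equals $(\ln2)\int_{J_w\cap q_z}\gamma_0(\nu_z,[w])\,d\calH^1$ up to the error from the difference between $\gamma_{1D}$ computed with the truncated-and-renormalized kernel versus the full kernel — this is where the $c2^{-t}$ relative error comes from, since the tail of the dyadic sum beyond layer $h+t$ is a $2^{-t}$ fraction. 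Then one compares $p_{\Gamma_{h+t},Q_z}(u_z^*)$ with $p_{\Gamma_{h+t},Q_z^\sharp}(u)$ via the seminorm inequality Lemma~\ref{lemmaeasy}(\ref{lemmaeasy2}) with parameter $\eta$: the cross term costs $(1+1/\eta)\|\Gamma_{h+t}\|_{L^1}\|u-u_z^*\|_{L^2(Q_z^\sharp)}^2\sim \eta^{-1}2^{h+t}\|u-u_z^*\|_{L^2}^2$, and $\|u-u_z^*\|_{L^2}^2$ is estimated by the Theorem~\ref{propmakerankone} bound squared, which after plugging $\eta_3\lesssim\eta^{1/2}$ and summing over $z$ (using set-superadditivity, Lemma~\ref{lemmaeasy}(iii), to pass from $\sum_z p_{\Gamma_{h+t},Q_z^\sharp}(u)$ back to a bounded multiple of $p_{\Gamma_{h+t},\Omega}(u)$ since the $Q_z^\sharp$ have bounded overlap) yields the $A^{5/6}(\text{defect})^{1/6}$ term and the $2^{h+t}\|\dist(u,\Z^N)\|_{L^1}$ term. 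The leftover $M^{-1/2}2^{t/2}A$ collects the bad-square contributions and the grid-boundary jump energy.

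\textbf{Main obstacle.} The delicate point is bookkeeping the exponents and the scaling in $h$ and $t$ simultaneously: Theorem~\ref{propmakerankone} is scale-invariant only after dividing by the side length $\ell\sim 2^{-h}$, so every $\eta_i$ carries a power of $2^h$, while $\|\Gamma_{h+t}\|_{L^1}\sim 2^{h+t}$ and the seminorm cross-term multiplies an $L^2$-defect that is itself $O(2^{-h})$-small; making all these powers cancel to leave exactly the stated right-hand side (in particular producing the clean exponents $5/6$, $1/6$ in the energy and $2/3$, $1/3$ in the $L^1$ bound, which come from combining the $2/3$–$1/3$ split in Theorem~\ref{propmakerankone} with the $1/2$–$1/2$ Hölder split in Lemma~\ref{lemma1dlocal}) is the computation that requires real care. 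The secondary obstacle is making $w$ genuinely $BV$ across the artificial grid without the grid jumps swamping the estimate — this forces a careful choice of how the local pieces are glued (e.g. using the freedom in the outer collar $Q_z^\sharp\setminus Q_z$ and the fact that neighbouring $b_z^*$ differ by a controlled amount, analogous to the neighbouring-square argument in the proof of Proposition~\ref{propmakeubv}), and is the reason the term $c\,M^{-1/2}2^{t/2}A$ — which can be made negligible by taking $M$ large at the end — appears at all.
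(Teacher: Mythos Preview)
Your overall architecture matches the paper's: tile at scale $2^{-h}$, split into good and bad squares via the threshold $|Du|(Q_z^{**})\gtrless M\,2^{-h}$, apply Lemma~\ref{lemma1dlocal} + Theorem~\ref{propmakerankone} + Lemma~\ref{lemmaZn} on good squares, take $w$ constant on bad squares, and compare energies via Lemma~\ref{lemma1Dinterfacesseparate} and Lemma~\ref{lemmaeasy}\ref{lemmaeasy2}. Two points, however, are wrong as written and would block the argument.

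First, your explanation of the $c\,2^{-t}$ term is incorrect. Lemma~\ref{lemmagammaexplicit} gives $\gamma_{1D}^{a\cdot\Gamma_k a}(\nu)=2(\ln 2)\,\gamma_0(\nu,a)$ \emph{exactly} for every layer $k$; there is no dyadic-tail error. The $2^{-t}$ enters because Lemma~\ref{lemma1Dinterfacesseparate} only controls the line energy on an \emph{inner} square $q$, smaller than $Q$ by a margin $2^{-(h+t)}$. When you tile $\omega$ by squares $Q_{a,z}$ of side $\sim 2^{-h}$ you must therefore enlarge each to $\hat Q_{a,z}$ before invoking Lemma~\ref{lemma1Dinterfacesseparate}, and the overlap created contributes $g(a)=\sum_z\bigl[p_{\Gamma_{h+t},\hat Q_{a,z}}(u)-p_{\Gamma_{h+t},Q_{a,z}}(u)\bigr]$; the paper shows $\int g(a)\,da\le c\,2^{-t}p_{\Gamma_{h+t},\Omega}(u)$ by estimating $\calL^2(\hat Q_{0,z}\setminus Q_{0,z})$.

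Second, and more seriously, your gluing mechanism does not close. The analogy with Proposition~\ref{propmakeubv} breaks down because the local pieces $u_z^*$ are one-dimensional functions, not constants, so neighbouring traces can differ by $O(M)$ in $L^\infty$; since the total grid length is $\sim 2^h|\omega|$, a naive patching gives a grid-boundary line energy that blows up like $2^h$. The paper's device is a Fubini shift: define $w=\sum_z u_z^*\chi_{Q_{a,z}}$ on \emph{shifted} squares $Q_{a,z}$, set $f(a)=\sum_{z,z'}\int_{\partial Q_{a,z}\cap\partial Q_{a,z'}}|u_z^*-u_{z'}^*|\,d\calH^1$, and observe $\int f(a)\,da\le c\,2^h\sum_z\|u-u_z^*\|_{L^1(Q_z^*)}$. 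Choosing $a$ so that both $f(a)$ and $g(a)$ are near their averages converts the $L^1$ control you already have into control of the grid-boundary jump energy. A smaller point: your bad-square treatment (``Chebyshev count + nearby constant'') is too vague to produce $M^{-1/2}2^{t/2}$. The paper refines each bad square to a sub-grid at scale $2^{-(h+t)}$, uses $p_{\Gamma_{h+t}}$ and a discrete Poincar\'e inequality on the $2^t\times 2^t$ grid to find the constant (this is the source of $2^{t/2}$), and then trades the leftover $2^{-h/2}$ for $M^{-1/2}|Du|(Q_z^{**})^{1/2}$ via the bad-square lower bound $|Du|(Q_z^{**})\ge M\,2^{-h}$.
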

\begin{proof}
{\bfseries Step 1. Domain subdivision.}
For $z\in \Z^2$, define $Q_{z}^*=(z+[-1,1]^2)2^{-h-5}$ and
$Q_{z}^{**}=(z+[-4,4]^2)2^{-h-5}$. We shall consider those $z$ for
which the larger square touches $\omega$, i.e., those in 
\begin{equation*}
Z=\{z\in \Z^2: Q_z^{**}\cap \omega\ne\emptyset\}\,.
\end{equation*}
The larger squares have finite overlap, are contained in $\Omega$, and give a uniform cover of
$\omega$, in the sense that 
\begin{equation}
\label{eq:chiqzchiqzstst}
64 \chi_\omega \le \sum_{z\in Z} \chi_{Q_z^{**}} \le 64 \chi_\Omega
\hskip1cm\text{a.e.}\,.
\end{equation}
The set $\omega$ is covered by the smaller squares, in
the sense that
\begin{equation}
\label{eq:chiqzchiqzststdue}
4 \chi_\omega \le \sum_{z\in Z} \chi_{Q_z^{*}} \,.
\end{equation}
We assert that we can find 
for each $z\in Z$ a function $u_z^*\in BV(Q_z^{*},\Z^N)$
such that
\begin{alignat}{1}\nonumber
\sum_{z\in Z} \| u-u_z^*\|_{L^1(Q_z^*)} \le &
\frac{c}{M^{1/2}} 2^{-h+t/2} A
+ C_M \|\dist(u,\Z^N)\|_{L^1(\Omega)}
\\
&
+ C_M 2^{-h} A^{2/3} 
\left(|Du|(\Omega) - |D(u \ast\varphi_{h})|(\omega)\right)^{1/3}
\,.
\label{eq:l1distallqz}
\end{alignat}
Further, for some $B\subset Z$,
$u_z^*$ is constant if $z\in B$, and
\begin{alignat}{1}\nonumber
\sum_{z\in Z\setminus B} \| u-u_z^*\|_{L^2(Q_z^*)}^2 &\le 
C_M \|\dist(u,\Z^N)\|_{L^1(\Omega)}
\\
&
+ C_M 2^{-h} A^{5/6} 
\left(|Du|(\Omega) - |D(u \ast\varphi_{h})|(\omega)\right)^{1/6}
\,.
\label{eq:l2distallqz}
\end{alignat}
We treat in Step 2 the squares in $B$, in Step 3 those in $Z\setminus
B$. We start by defining the set of ``bad'' squares by
\begin{equation}
B = \{ z\in Z: |Du|(Q_z^{**}) \ge M 2^{-h} \}\,,
\end{equation}
and at the same time the set of ``good'' squares
\begin{equation}
G = Z\setminus B = \{ z\in Z: |Du|(Q_z^{**}) < M 2^{-h} \}\,.
\end{equation}

{\bfseries Step 2. The ``bad'' squares.}
This argument is similar to the one used in proving Proposition
\ref{propmakeubv}. Fix one $z\in B$.
We shall  subdivide $Q_z^*$ into smaller
squares of side  $\alpha=2^{-(h+t+2)}$, and show that
$u$ does not change much from small square to small square. This will 
allow us to replace $u$ by a constant in the entire square
$Q_z^*$.

Precisely, for $\zeta\in \Z^2$ we set
$q_\zeta = \alpha \zeta +(0,\alpha)^2$, $Q_\zeta=\alpha \zeta + (-\alpha,
2\alpha)^2$, and $W_z = \{\zeta\in \Z^2 : Q_\zeta\subset Q_z^{**}\}$.
We define $\hat u_z: W_z\to\R^N$ by setting $\hat u_z(\zeta)$ equal to the
average of  $u$ over the square $Q_\zeta$. Reasoning as in (\ref{eqpgammakQz})
of Proposition \ref{propmakeubv} we obtain 
\begin{equation*}
\int_{Q_\zeta} |u-\hat u_z(\zeta)|^2 \, dx\le c 2^{-h-t} p_{h+t,Q_\zeta}(u)\,.
\end{equation*}
For $|\zeta-\zeta'|=1$ we have $\calL^2(Q_\zeta\cap Q_{\zeta'})\ge \alpha^2$, and therefore
\begin{equation*}
\sum_{\zeta,\zeta'\in W_z: |\zeta-\zeta'|=1} \alpha^2 |\hat u_z(\zeta)-\hat
u_z(\zeta')|^2 \le c 2^{-h-t} p_{h+t,Q_z^{**}}(u)\,.
\end{equation*}
Since $W_z$ is a discrete square, the discrete Poincar\'e inequality yields a
$v_z\in\R^N$ such that 
\begin{equation*}
\sum_{\zeta\in W_z} \alpha^4 |\hat u_z(\zeta)-v_z|^2 \le c 2^{-2h} 2^{-h-t} p_{h+t,Q_z^{**}}(u)\,.
\end{equation*}
Choose $u_z^*\in\Z^N$ such that $|u_z^*-v_z|\le N$. Then
\begin{alignat*}{1}
\|u-u_z^*\|^2_{L^2(Q_z^{**})} \, dx &\le
3\sum_{\zeta\in W_z} \left[ \int_{q_\zeta} |u-\hat u_z(\zeta)|^2 \, dx + \alpha^2 |\hat
u_z(\zeta)-v_z|^2 +\alpha^2 N^2 \right]\\
&\le c 2^{-h-t} p_{h+t,Q_z^{**}}(u)
+c \alpha^{-2} 2^{-2h} 2^{-h-t} p_{h+t,Q_z^{**}}(u) + c  2^{-2h}\\
&\le c 2^{-h+t} p_{h+t,Q_z^{**}}(u) + c  2^{-2h}\,,
\end{alignat*}
and since $z\in B$, 
\begin{alignat*}{1}
\|u-u_z^*\|_{L^1(Q_z^*)}& \le c 2^{-h}\|u-u_z^*\|_{L^2(Q_z^*)}\\
&\le c 2^{-h+t/2}2^{-h/2} \left( p_{h+t,Q_z^{**}}(u)\right)^{1/2} + c  2^{-2h}  \\
&\le \frac{c}{M^{1/2}} 2^{-h+t/2} \left( |Du|(Q_z^{**})\right)^{1/2}
\left( p_{h+t,Q_z^{**}}(u)\right)^{1/2} + \frac{c}{M}2^{-h}  |Du|(Q_z^{**})\,. 
\end{alignat*}
We conclude that
\begin{alignat}{1}\label{estimatebad}
\sum_{z\in B} \|u-u_z^*\|_{L^1(Q_z^*)}
&\le \frac{c}{M^{1/2}} 2^{-h+t/2} \left( |Du|(\Omega)\right)^{1/2}
\left( p_{h+t,\Omega}(u)\right)^{1/2} + \frac{c}{M} 2^{-h}  |Du|(\Omega)\nonumber\\
&\le \frac{c}{M^{1/2}} 2^{-h+t/2} \max\left\{ |Du|(\Omega),
p_{h+t,\Omega}(u)\right\}\,. 
\end{alignat}
This proves (\ref{eq:l1distallqz}) for the ``bad'' squares.

{\bfseries Step 3. The ``good'' squares.}
Let $z\in Z$. We apply Lemma \ref{lemma1dlocal}
to $f=Du$ on the square $Q_{z}^{**}$, with $r=2^{-h}$, and the mollifier
$\varphi_h$ (it is here important that the side of
$Q_z^{**}$ is $2^{-h-2}$). For each of them we obtain a 
matrix $A_z\in\R^{N\times 2}$ such that the quantity
\begin{equation*}
\eta^z_3=2^h \|Du-A_z(A_z\cdot Du)_+\|_{L^1(Q_{z}^{**})}
\end{equation*}
obeys
\begin{equation}\label{eqboundetaz3}
\eta^z_3\le c 2^{h} \left(
\int_{\R^2} |Du| (\varphi_h\ast \chi_{Q_{z}^{**}})
- \int_{\R^2} |D(u \ast\varphi_{h})|\chi_{Q_{z}^{**}} 
\right)^{1/2} 
\left( |Du|(Q_{z}^{**})\right)^{1/2}\,.
\end{equation}
We intend to apply Theorem \ref{propmakerankone} to the pair of
squares $q=Q_z^*\subset Q=Q_z^{**}$, with $\ell=2^{-h-3}$ and $z\in G$.
Therefore we define, analogously to Proposition
\ref{propmakerankone} (but, for notational convenience, without the
factors $2^{-3}$),
\begin{equation*}
\eta_1^z = 2^{2h} \|\dist(u,\Z^N)\|_{L^1(Q_z^{**})}
\end{equation*}
and
\begin{equation*}
\eta_2^z = 2^{h} |Du|(Q_z^{**})\,.
\end{equation*}
For the values of $z$ such that $\eta_1^z\le \delta/2^7$, i.e., 
\begin{equation}\label{eqasseta1}
\|\dist(u,\Z^N)\|_{L^1(Q_z^{**})}\le \delta 2^{-2h-7} \,,
\end{equation}
we can apply 
Theorem \ref{propmakerankone} to the square $Q_z^{**}$, 
and obtain
$\nu_z\in S^1$,
$a_z, b_z\in\R^N$, and a monotone function $\lambda_z$, such that
the function $\tilde u_z(x)=a_z\lambda_z(x\cdot\nu_z)+b_z$ obeys
\begin{alignat}{1}
\|u-\tilde u_z\|_{L^2(Q_{z}^*)} & =
\|u(x) - a_z \lambda_z(x\cdot\nu_z)-b_z\|_{L^2(Q_{z}^*)}\nonumber\\
& \le c
2^{-h} ( (\eta_2^z)^{2/3}(\eta_3^z)^{1/3} + \eta_2^z(\eta_3^z)^{1/2} +
\eta_1^z)\,,
\nonumber
\end{alignat}
with 
\begin{equation*}
\|a_z \lambda_z\|_{L^\infty(\R)} \le c \eta_2^z\,.
\end{equation*}
Since $z\in G$ we have $\eta_3^z\le\eta_2^z\le M$, and the above conditions
imply 
\begin{alignat}{1}
\|u-\tilde u_z\|_{L^2(Q_{z}^*)} \le&
C_M 
2^{-h} ( (\eta_2^z)^{2/3}(\eta_3^z)^{1/3} + \eta_1^z)\,,
\label{equtildeumollifgener}
\end{alignat}
and
\begin{equation*}
\|a_z \lambda_z\|_{L^\infty(\R)} \le c M\,.
\end{equation*}
Therefore we can apply Lemma \ref{lemmaZn} to the function $\tilde u_z$, and
obtain $a_z^*$, $b_z^*\in\Z^N$ and 
$\lambda_z^*\in L^1(\R;\Z)$ such that
$u^*_z(x)=a_z^*\lambda_z^*(x\cdot\nu_z^*)+b_z^*$ obeys
\begin{equation}\label{equstarmenoutile}
\|\tilde u_z-u_z^*\|_{L^1(Q_z^*)}\le C_M \|\dist(\tilde u_z, \Z^N)\|_{L^1(Q_z^*)}\,.
\end{equation}
Here and below the dependence of the constant on $M$ is indicated
explicitly, whereas we do not indicate the dependence on $N$.
In turn, using Remark~\ref{remZn}, (\ref{equstarmenoutile}) gives
\begin{alignat*}{1}
\|\tilde u_z-u^*_z\|_{L^2(Q_z^*)}^2\le& C (M+N) 
\|\tilde u_z-u^*_z\|_{L^1(Q_z^*)}\\
\le & C_M \|\dist(\tilde u_z, \Z^N)\|_{L^1(Q_z^*)}\\
\le & C_M \left( \|\dist( u, \Z^N)\|_{L^1(Q_z^*)} 
+ \| u-\tilde u_z\|_{L^1(Q_z^*)}\right)\,.
\end{alignat*}
Recalling the definition of $\eta_1^z$ and (\ref{equtildeumollifgener}), we obtain 
\begin{alignat*}{1}
\|\tilde u_z-u^*_z\|_{L^2(Q_z^*)}^2\le& C_M 
\left( 2^{-2h} \eta_1^z 
+ 2^{-h} \| u-\tilde u_z\|_{L^2(Q_z^*)}\right)\\
\le & C_M 2^{-2h}
\left( (\eta_2^z)^{2/3}(\eta_3^z)^{1/3} + \eta_1^z
\right)\,.
\end{alignat*}
Since we assumed $\eta_3^z\le \eta_2^z\le M$ and $\eta_1^z\le \delta/2^7$, 
estimate (\ref{equtildeumollifgener}) implies
\begin{alignat*}{1}
\|u-\tilde u_z\|_{L^2(Q_{z}^*)}^2& \le C_M
2^{-2h} ( (\eta_2^z)^{2/3}(\eta_3^z)^{1/3} +
\eta_1^z)\,.
\end{alignat*}
Therefore for those $z\in G$ for which (\ref{eqasseta1}) holds we have
\begin{alignat}{1} \nonumber
\|u-u_z^*\|_{L^2(Q_z^*)}^2
\le &2
\left(\|u-\tilde u_z\|_{L^2(Q_z^*)}^2+ \|\tilde u_z-u_z^*\|_{L^2(Q_z^*)}^2\right)\\
\le &C_M 2^{-2h} \left( (\eta_2^z)^{2/3}(\eta_3^z)^{1/3} + \eta_1^z
\right) \,.
\label{eqprimagammahtqzstst}
\end{alignat}
If instead $z\in G$ is such that (\ref{eqasseta1}) does not hold, then we
take $u^*_z$ constant, equal to the integer closest to the average of
$u$. Then  by the Sobolev-Poincar\`e inequality
\begin{equation*}
\|u-u^*_z\|_{L^2(Q_{z}^*)}^2\le c 2^{-2h} +c \left( |Du|(Q_z^*)\right)^2 \le
c (1+M^2) 2^{-2h} \le C_M \eta_1^z 2^{-2h}\,.
\end{equation*}
Therefore the estimate (\ref{eqprimagammahtqzstst}) holds for all $z\in G$.

We conclude that
\begin{alignat*}{1}
\sum_{z\in G} & \|u-u_z^*\|_{L^2(Q_z^*)}^2
\le C_M 2^{-2h} \left( \sum_{z\in G} (\eta_2^z)^{2/3}(\eta_3^z)^{1/3} +
\sum_{z\in G} \eta_1^z
\right)\\
&\le C_M 2^{-h} \left( \sum_{z\in G} 2^{-h}\eta_2^z \right)^{2/3}
\left( \sum_{z\in G} 2^{-h}\eta_3^z \right)^{1/3}
+ C_M  \sum_{z\in G} 2^{-2h}\eta_1^z
\,.
\end{alignat*}
Since $\sum\chi_{Q_z^{**}}\le C\chi_\Omega$, we have
\begin{alignat*}{1}
& \sum_{z\in G} \|u-u_z^*\|_{L^2(Q_z^*)}^2\\
&\le C_M 2^{-h} \left( |Du|(\Omega) \right)^{2/3}
\left( \sum_{z\in G} 2^{-h}\eta_3^z \right)^{1/3}
+ C_M \|\dist(u, \Z^N)\|_{L^1(\Omega)}
\,.
\end{alignat*}
The term containing $\eta_3^z$ is estimated using (\ref{eqboundetaz3}),
\begin{alignat*}{1}
& \sum_{z\in G} 2^{-h}\eta^z_3 \le\sum_{z\in Z} 2^{-h}\eta^z_3 \\
\le & c \left(
\int_{\R^2} |Du| \left[\varphi_h \ast \sum_{z\in Z} \chi_{Q_{z}^{**}} \right]
- \int_{\R^2} |D(u \ast\varphi_{h})|\sum_{z\in Z} \chi_{Q_{z}^{**}} 
\right)^{1/2}
\left( |Du|(\Omega)\right)^{1/2}\,.
\end{alignat*}
Recalling (\ref{eq:chiqzchiqzstst}), and the fact that
$\dist(Q_z^{**},\partial\Omega)\ge
\dist(\omega,\partial\Omega)-\diam(Q_z^{**})\ge
2^{-h}$ for all $z\in Z$, we obtain
\begin{equation*}
\varphi_h \ast \sum_{z\in Z} \chi_{Q_{z}^{**}} \le 
64 \varphi_h \ast \chi_{\cup \chi_{Q_{z}^{**}}} \le 
64\chi_\Omega
\end{equation*}
and $\sum_{z\in Z} \chi_{Q_{z}^{**}}\ge 64\chi_\omega$. Therefore
\begin{equation*}
\sum_{z\in G} 2^{-h}\eta^z_3 \le c
\left(|Du|(\Omega) - |D(u \ast\varphi_{h})|(\omega)\right)^{1/2}
\left( |Du|(\Omega)\right)^{1/2}\,.
\end{equation*}
We conclude
\begin{alignat*}{1}
& \sum_{z\in G} \|u-u_z^*\|_{L^2(Q_z^*)}^2\\
&\le C_M 2^{-h} \left( |Du|(\Omega) \right)^{5/6}
\left(|Du|(\Omega) - |D(u \ast\varphi_{h})|(\omega)\right)^{1/6}
+ C_M \|\dist(u, \Z^N)\|_{L^1(\Omega)}
\,.
\end{alignat*}
This concludes the proof of (\ref{eq:l2distallqz}).

Finally, from (\ref{equtildeumollifgener}) and (\ref{equstarmenoutile}) we
have
\begin{equation*}
\|u-u_z^*\|_{L^1(Q_z^*)}\le C_M \|\dist(\tilde u_z, \Z^N)\|_{L^1(Q_z^*)}
+ C_M 2^{-2h} ( (\eta_2^z)^{2/3}(\eta_3^z)^{1/3} + \eta_1^z)\,.
\end{equation*}
Estimating the sum over all squares as above, 
\begin{alignat*}{1}
\sum_{z\in G} \|u_z^*- u\|_{L^1(Q_z^*)}&\le C_M \|\dist(\tilde u_z,
\Z^N)\|_{L^1(\Omega)} \\
&+ C_M 2^{-h}
\left(|Du|(\Omega) - |D(u \ast\varphi_{h})|(\omega)\right)^{1/3} 
\left(|Du|(\Omega) \right)^{2/3} \,.
\end{alignat*}
This, together with (\ref{estimatebad}), concludes the proof of (\ref{eq:l1distallqz}).

{\bfseries Step 4. Global construction.}
Based on the functions constructed above on each square, which obey
the estimates (\ref{eq:l1distallqz}) and (\ref{eq:l2distallqz}), we
shall now construct the global function $w$. The first idea is to set
$w=u_z^*$ in $Q_z=(z+[0,1]^2)2^{-h-5}$. Since (\ref{eq:l1distallqz}) gives
only control of $u-u_z^*$ in $L^1$ the function $w$ could have large jumps on
$\partial Q_z$, and may not be in $BV(\omega)$. The standard device to avoid
this is to set $w=u_z^*$ on the shifted squares
$Q_{a,z}=(a+z+[-1/2,1/2]^2)2^{-h-5}$. Then one can use Fubini's theorem to
show that there exists an $a\in [-1/4,1/4]^2$ such that $w$ has good $BV$
bound, see (\ref{eqfchoosea}) and (\ref{eqfchooseabis}) below. Since
Lemma \ref{lemma1Dinterfacesseparate} 
 gives a control of the line energy in terms of a slightly
enlarged square we also introduce the squares
$\hat Q_{a,z}$. 

Precisely, for any $a\in [-1/4,1/4]^2$ and $z\in \Z^2$ we define
$Q_{a,z}=(a+z+[-1/2,1/2]^2)2^{-h-5}$ and
$\hat Q_{a,z}=(a+z+[-1/2-2^{-t},1/2+2^{-t}]^2)2^{-h-5}$. We observe
that $Q_{a,z}\subset\hat Q_{a,z}\subset Q_z^*$ for all admissible
$a$, $t$, $z$. Further,
\begin{equation*}
\chi_\omega \le \sum_{z\in Z} \chi_{Q_{a,z}} \le \chi_\Omega 
\hskip1cm \text{ a.e. }
\end{equation*}
for all admissbile choices of $a$.

We define
\begin{equation*}
f(a) = \sum_{z,z'\in Z} \int_{ \partial Q_{a,z}\cap \partial Q_{a,z'}} |u^*_z - u^*_{z'}|(x) d\calH^1(x)
\end{equation*}
and observe that, by Fubini's theorem,
\begin{equation}\label{60bis}
\int_{(-1/4,1/4)^2} f(a) da \le c 2^h 
\sum_{z,z'\in Z} \|u^*_z - u^*_{z'}\|_{L^1(Q_z^*\cap Q_{z'}^*)}
\le c 2^h 
\sum_{z\in Z} \|u - u^*_{z}\|_{L^1(Q_z^*)}
\,.
\end{equation}
In order to control the error done by enlarging the squares we define
analogously
\begin{equation*}
g(a) = \sum_{z\in Z} \left[ p_{\Gamma_{h+t},\hat Q_{a,z}}(u)
-p_{\Gamma_{h+t},Q_{a,z}}(u) \right]\,.
\end{equation*}
and claim that
\begin{equation}\label{eqestintg}
\int_{(-1/4,1/4)^2} g(a) da \le c 2^{-t} p_{h+t,\Omega}(u)\,. 
\end{equation}
To see this, we write
\begin{alignat*}{1}
p_{\Gamma_{h+t},\hat Q_{a,z}}(u) -p_{\Gamma_{h+t},Q_{a,z}}(u)
= & \int_{\R^2\times \R^2} (u(x)-u(y))\cdot \Gamma_{h+t}(x-y) (u(x)-u(y))\\
&\times
\left[ \chi_{\hat Q_{a,z}}(x)\chi_{\hat Q_{a,z}}(y) -
\chi_{ Q_{a,z}}(x)\chi_{ Q_{a,z}}(y)\right] \, dx dy
\end{alignat*}
and observe that
\begin{alignat*}{1}
\chi_{\hat Q_{a,z}}(x)& \chi_{\hat Q_{a,z}}(y) -
\chi_{ Q_{a,z}}(x)\chi_{ Q_{a,z}}(y) = \\
&\chi_{\hat Q_{a,z}}(x)\left[ \chi_{\hat Q_{a,z}}(y)-\chi_{Q_{a,z}}(y)\right]
+\left[ \chi_{\hat Q_{a,z}}(x)-\chi_{Q_{a,z}}(x)\right]\chi_{ Q_{a,z}}(y)\,.
\end{alignat*}
Focussing on the second term we note that 
$\chi_{Q_{a,z}}\le \chi_{Q_{0,z}^*}$ and
$\chi_{Q_{a,z}}(x)=\chi_{Q_{0,z}}(x-2^{-h-5}a)$. Therefore 
\begin{alignat*}{1}
& \int_{(-1/4,1/4)^2} 
\left[ \chi_{\hat Q_{a,z}}(x)-\chi_{Q_{a,z}}(x)\right]\chi_{ Q_{a,z}}(y) da\\
& \le \chi_{ Q_{z}^*}(y) 
\chi_{ Q_{z}^*}(x) 
\int_{\R^2} \chi_{\hat Q_{0,z}\setminus
Q_{0,z}}(x-2^{-h-5}a) \, da\\
& \le 2^{2h+10}\calL^2(\hat Q_{0,z}\setminus Q_{0,z}) 
\chi_{ Q_{z}^*}(x)
\chi_{ Q_{z}^*}(y) 
\le c 2^{-t} \chi_{ Q_{z}^*}(x) \chi_{ Q_{z}^*}(y) \,.
\end{alignat*}
An analogous estimate holds for the other term. We conclude that
\begin{alignat*}{1}
\sum_{z\in Z} \int_{(-1/4,1/4)^2}& \left[\chi_{\hat Q_{a,z}}(x)\chi_{\hat
Q_{a,z}}(y) - \chi_{ Q_{a,z}}(x)\chi_{ Q_{a,z}}(y)\right]\, da\\
& \le c 2^{-t} \sum_{z\in Z} \chi_{ Q_{z}^*}(x) \chi_{ Q_{z}^*}(y) \\
&\le c 2^{-t} \chi_{\Omega}(x) \chi_{\Omega}(y)
\end{alignat*}
and
\begin{alignat*}{1}
\int_{(-1/4,1/4)^2} g(a) da \le &
\int_{\R^2\times \R^2} (u(x)-u(y))\cdot \Gamma_{h+t}(x-y) (u(x)-u(y))\\
&\times c 2^{-t} \chi_{\Omega}(x) \chi_{\Omega}(y)
\\
=& c 2^{-t} p_{h+t,\Omega}(u)\,. 
\end{alignat*}
This concludes the proof of (\ref{eqestintg}).

By (\ref{60bis})  and (\ref{eqestintg}) there exists 
 $a\in (-1/4,1/4)^2$ such that
\begin{equation}\label{eqfchoosea}
f(a) \le c 2^h 
\sum_{z\in Z} \|u - u^*_{z}\|_{L^1(Q_z^*)}
\end{equation}
and
\begin{equation*}
g(a)\le c 2^{-t} p_{h+t,\Omega}(u)\,. 
\end{equation*}
We define
\begin{equation*}
w = \sum_{z\in Z} u^*_z \chi_{Q_{a,z}}\,.
\end{equation*}
Clearly $w\in BV(\Omega;\Z^N)$, and
(\ref{eqpropconstrlongestimateLone}) follows from
(\ref{eq:l1distallqz}).
In order to prove (\ref{eqpropconstrlongestimateenergy}) we first observe that
\begin{equation}\label{eqfchooseabis}
  |Dw|\left(\bigcup_{z\in Z} \partial Q_{a,z}\right) \le c f(a)\,.
\end{equation}
The fact that $\gamma_0^\rel$ is convex in the first argument and subadditive
in the second easily implies   $|\gamma_0^\rel(\nu,s)|\le C |s|$
(to see this, consider that 
$ \gamma_0^\rel(\nu, s)\le \sum_{i=1}^N 
\sum_{j=1}^2 |s_i||\nu_j| \gamma_0^\rel(e_j, e_i)$). Therefore
\begin{equation*}
  \int_{\omega\cap\bigcup_{z\in Z} \partial Q_{a,z}} \gamma_0^\rel(\nu, [w]) d\calH^1
  \le c f(a)\,.
\end{equation*}
By Lemma \ref{lemma1Dinterfacesseparate} we obtain
\begin{equation*}
(\ln 2) E_0[w,Q_{a,z}]\le p_{\Gamma_{h+t}, \hat Q_{a,z}}(u_z^*)\qquad
\forall \ z\in G\,. 
\end{equation*}
Since $E_0^\rel\le E_0$ and  $ E_0^\rel[w,Q_{a,z}]=0$ whenever 
$z\in B$,
\begin{alignat*}{1}
(\ln 2) E_0^\rel[w,\omega]&\le \sum_{z\in G}(\ln 2) E_0[w,Q_{a,z}]
+ c f(a)\\
&\le \sum_{z\in G}p_{\Gamma_{h+t}, \hat Q_{a,z}}(u_z^*)
+ c f(a)\,.
\end{alignat*}
The first term can be estimated by
\begin{alignat*}{1}
\sum_{z\in G}&p_{\Gamma_{h+t}, \hat Q_{a,z}}(u_z^*) \le
(1+\eta) \sum_{z\in G}p_{\Gamma_{h+t}, \hat Q_{a,z}}(u) 
+\left(1+\frac1\eta \right) \sum_{z\in G}p_{\Gamma_{h+t}, \hat Q_{a,z}}(u_z^*-u)\\ 
&\le
(1+\eta) \sum_{z\in G}p_{\Gamma_{h+t}, Q_{a,z}}(u) + (1+\eta) g(a) 
+\left(1+\frac1\eta \right) c 2^{h+t} \sum_{z\in G}\|u_z^*-u\|_{L^2(Q_z^*)}^2\\
&\le
(1+\eta+c 2^{-t}) \sum_{z\in G}p_{\Gamma_{h+t}, Q_{a,z}}(u) 
+\left(1+\frac1\eta \right)c 2^{h+t} \sum_{z\in G}\|u_z^*-u\|_{L^2(Q_z^*)}^2
\end{alignat*}
Recalling (\ref{eq:l1distallqz}), (\ref{eq:l2distallqz}) and
(\ref{eqfchoosea}) we obtain (\ref{eqpropconstrlongestimateenergy}). This
finishes the proof of Proposition \ref{propconstruction}.
\end{proof}

\section{Iterative mollification and conclusion of the proof}\label{iterativemoll}

We now prove the following key result.

\begin{proposition}\label{proplowerbound}
Let $\Omega\subset\R^2$ be a bounded Lipschitz domain, and assume $u_0\in
BV(\Omega;\Z^N)$. Then for any sequences $\e_i\to0$, $u_i\to u_0$ in
$L^1(\Omega;\R^N)$ and any Lipschitz domain $\omega\subset\subset\Omega$
there is a sequence $w_j\in BV(\omega;\Z^N)$ such that $w_j\to u_0$ in
$L^1(\omega;\R^N)$ and 
\begin{equation*}
\liminf_{j\to\infty} E_0^\rel[w_j,\omega] \le \liminf_{i\to\infty}
E_{\e_i}[u_i,\Omega] \,. 
\end{equation*}
\end{proposition}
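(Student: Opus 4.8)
The plan is to construct the competitor by feeding a well-chosen approximation of $u_0$ into Proposition~\ref{propconstruction}, using an iterative mollification to locate, at almost every length scale, a function that the Proposition handles efficiently. First I would fix intermediate Lipschitz domains $\omega\subset\subset\omega_1\subset\subset\omega_2\subset\subset\Omega$ and reduce to a nicer sequence: passing to a subsequence we may assume $L:=\liminf_i E_{\e_i}[u_i,\Omega]=\lim_i E_{\e_i}[u_i,\Omega]<\infty$, and Proposition~\ref{eqseqBV} (on $\omega_2$) gives $v_k\in BV(\omega_2;\Z^N)$ with $v_k\to u_0$ in $L^1$, $|Dv_k|(\omega_2)\le C_\delta(L+1)$, and, along a further subsequence, $\tfrac1k\sum_{h=0}^k p_{\Gamma_h,\omega_2}(v_k)\le(1+2\delta)(\ln2)L+o(1)$. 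A standard truncation (projecting the values onto a fixed large ball, which does not increase $E_\e$) lets us also assume $\|v_k\|_{L^\infty}\le C$. The goal then becomes: produce $w_k\in BV(\omega;\Z^N)$ with $w_k\to u_0$ in $L^1$ and $\liminf_k E_0^\rel[w_k,\omega]\le(1+2\delta)L$ up to errors governed by auxiliary parameters; the Proposition follows by a diagonal argument sending these parameters and $\delta$ to their limits, and (with the lower semicontinuity of $E_0^\rel$ along $L^1$-convergent sequences and $\omega\uparrow\Omega$) this also yields Theorem~\ref{theorem2}.

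Next I would set up the iterative mollification. With $\varphi$ as in Section~\ref{control}, for $h\in\N$ let $v_k^{(h)}:=v_k\ast\varphi_h\ast\varphi_{h+1}\ast\cdots$ be $v_k$ mollified on all scales $\le 2^{-h}$, so $v_k^{(h)}=v_k^{(h+1)}\ast\varphi_h$, and put
\begin{equation*}
d_h:=|Dv_k^{(h+1)}|(\omega_2)-|D(v_k^{(h+1)}\ast\varphi_h)|(\omega_1)\,.
\end{equation*}
Since mollification never increases total variation, the $d_h$ are nonnegative and $\sum_h d_h$ telescopes, hence $\sum_h d_h\le |Dv_k|(\omega_2)+(\text{fixed boundary term})\le C_\delta(L+1)$ uniformly in $k$. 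Consequently, for any fixed block width $\mu$ and threshold $\eps'$, the number of $h$ with $\sum_{j=h}^{h+\mu-1}d_j>\eps'$ is at most $\mu C_\delta(L+1)/\eps'$, independently of $k$; so for $k$ large a $(1-o(1))$ fraction of the scales $h\in\{0,\dots,k-t\}$ are \emph{block-good}.

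At a block-good $h$ I would apply Proposition~\ref{propconstruction} to $u=v_k^{(h+\mu)}$ with $\mu=t+r$, at mollification level $h$ and layer level $h+t$, on the domain pair $\omega\subset\subset\omega_1$. Three quantities must be controlled. (i) \emph{Line energy:} $p_{\Gamma_{h+t},\omega_1}(v_k^{(h+\mu)})\le p_{\Gamma_{h+t},\omega_2}(v_k)$, by Jensen's inequality for the nonnegative quadratic form $p$ — mollification cannot increase it, modulo the usual enlargement of domain. (ii) \emph{Distance term:} $\|\dist(v_k^{(h+\mu)},\Z^N)\|_{L^1}\le\|v_k^{(h+\mu)}-v_k\|_{L^1}\le c\,2^{-h-\mu}|Dv_k|(\omega_2)$, so the term $\tfrac{C_M}{\eta}2^{h+t}\|\dist(\cdot)\|_{L^1}$ in \eqref{eqpropconstrlongestimateenergy} is $\le \tfrac{cC_M}{\eta}2^{-r}C_\delta(L+1)$, small once $r$ is large. (iii) \emph{Total-variation defect:} $|Du|(\omega_1)-|D(u\ast\varphi_h)|(\omega)\le\sum_{j=h}^{h+\mu-1}d_j\le\eps'$, since $v_k^{(h+\mu)}\ast\varphi_h$ is a further mollification of $v_k^{(h)}$. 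With $A_h=\max\{|Dv_k^{(h+\mu)}|(\omega_1),p_{\Gamma_{h+t},\omega_1}(v_k^{(h+\mu)})\}$, estimate \eqref{eqpropconstrlongestimateenergy} gives, for the resulting $w_{k,h}\in BV(\omega;\Z^N)$,
\begin{equation*}
(\ln2)E_0^\rel[w_{k,h},\omega]\le(1+\eta+c2^{-t})\,p_{\Gamma_{h+t},\omega_2}(v_k)+\frac{C_M}{\eta}2^t A_h^{5/6}(\eps')^{1/6}+\frac{c}{M^{1/2}}2^{t/2}A_h+\frac{cC_M}{\eta}2^{-r}C_\delta(L+1)\,.
\end{equation*}
Rather than pick one scale, I would average this over the block-good $h\in\{0,\dots,k-t\}$. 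Since they are a $(1-o(1))$ fraction and $\sum_{h=0}^{k-t}p_{\Gamma_{h+t},\omega_2}(v_k)\le\sum_{h'=0}^{k}p_{\Gamma_{h'},\omega_2}(v_k)\le k(1+2\delta)(\ln2)L(1+o(1))$, the average of the first term is $(1+\eta+c2^{-t})(1+2\delta)(\ln2)L(1+o(1))$; by Hölder and $\sum_h A_h=O(k)$ (again from the bound on $\sum_{h'}p_{\Gamma_{h'}}$ and $|Dv_k|$, and from restricting to $h\le k-t$ so that only layers with controlled energy occur), the averages of the middle terms are $O((\eps')^{1/6})+O(2^{t/2}M^{-1/2})$. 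Hence some block-good $h=h(k)$ gives $(\ln2)E_0^\rel[w_{k,h(k)},\omega]$ bounded by the same quantity; set $w_k:=w_{k,h(k)}$. Letting $r\to\infty$, then $M\to\infty$, then $\eps'\to0$, then $\eta\to0$ and $t\to\infty$, then $\delta\to0$ along a diagonal sequence yields $\liminf_k E_0^\rel[w_k,\omega]\le L$. Convergence $w_k\to u_0$ in $L^1(\omega)$ follows from $v_k\to u_0$ together with \eqref{eqpropconstrlongestimateLone}, whose terms vanish after the same averaging (one may select $h(k)$ so as to also make $\|w_{k,h}-v_k\|_{L^1(\omega)}$ small, its average over block-good $h$ being controlled by the same quantities).

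The main obstacle is the tension in (ii)--(iii): the function fed to Proposition~\ref{propconstruction} must be close to $v_k$ in $L^1$ on a scale far finer than the layer scale $2^{-h-t}$ (so the $\dist$ term is negligible) and yet be an iterated mollification (so the total-variation defect telescopes and is small on most scales) and have nonlocal energy bounded by that of $v_k$. These pull the resolution scale in opposite directions, and the reconciliation rests precisely on the uniform bound $\sum_h d_h\le C_\delta(L+1)$: it forces $v_k$ to have, at a $(1-o(1))$ fraction of scales $h$, a ``gap'' of $\mu=t+r$ consecutive low-defect scales below $2^{-h}$, and mollifying $v_k$ down to the bottom of that gap — i.e.\ using $v_k^{(h+\mu)}$ — meets all three demands at once. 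The rest (keeping the error terms summable after averaging, the restriction $h\le k-t$, and nesting the several parameter limits in the correct order) is routine but delicate.
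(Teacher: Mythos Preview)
Your overall strategy matches the paper's: reduce via Proposition~\ref{eqseqBV} to $BV(\cdot;\Z^N)$ functions with bounded truncated energy, use iterated mollification to detect scales at which the total-variation defect is small, and at such a scale apply Proposition~\ref{propconstruction}. However, the telescoping step contains a genuine gap. With your definition $d_h=|Dv_k^{(h+1)}|(\omega_2)-|Dv_k^{(h)}|(\omega_1)$ on \emph{fixed} domains $\omega_1\subsetneq\omega_2$, the sum does \emph{not} telescope: one computes
\[
\sum_{h=0}^{H-1} d_h = |Dv_k^{(H)}|(\omega_2)-|Dv_k^{(0)}|(\omega_1) + \sum_{j=1}^{H-1} |Dv_k^{(j)}|(\omega_2\setminus\omega_1)\,,
\]
and the last sum is of order $H\cdot|Dv_k|(\omega_2\setminus\omega_1)$, hence unbounded in $H$ (and so in $k$). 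The ``fixed boundary term'' you invoke is therefore not fixed. Relatedly, your justification of (iii), that ``$v_k^{(h+\mu)}\ast\varphi_h$ is a further mollification of $v_k^{(h)}$'', is backwards: $v_k^{(h)}$ carries the additional convolutions at scales $h+1,\ldots,h+\mu-1$, so $v_k^{(h)}$ is a mollification of $v_k^{(h+\mu)}\ast\varphi_h$, not conversely; this also undermines the claimed bound on the defect in terms of $\sum_{j=h}^{h+\mu-1}d_j$.

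The paper resolves precisely this difficulty by letting the domain vary with the scale: one sets $\Omega_h=\{x:B_{2^{-h}}(x)\subset\Omega'\}$ and defines the iterated mollifications $u_h$ on $\Omega_h$, stopping the iteration at level $k$ (so $u_h=v_{k,\delta}$ for $h\ge k$, which also avoids your infinite convolution). The quantities $|Du_{h+m}|(\Omega_{h+m})-|Du_h|(\Omega_h)$ then genuinely telescope when summed over $h=0,\dots,k$, yielding a bound $CmA_\delta$; a single pigeonhole produces $h$ for which both this defect and $p_{\Gamma_{h+t},\Omega'}(v_{k,\delta})$ are close to their averages, and one applies Proposition~\ref{propconstruction} to $u_{h+m}$ on the pair $\omega\subset\subset\Omega_h$. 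Your argument can be repaired along these lines, but as written the key bound $\sum_h d_h\le C_\delta(L+1)$ is false.
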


This result directly implies  Theorem~\ref{theorem2}. 
\begin{proof}[Proof of   Theorem~\ref{theorem2}]
Since $E_0^\rel$
is lower semicontinuous, 
$$
E_0^{\rm rel}[u_0,\omega] \le \liminf_{j\to\infty} E_0^\rel[w_j,\omega] \le \liminf_{i\to\infty}
E_{\e_i}[u_i,\Omega]\,.
$$ 
The conclusion follows by considering an increasing sequence $\omega_k$ with
$\bigcup\omega_k=\Omega$.  
\end{proof}

As already mentioned in Section~\ref{outline}, Theorem~\ref{theorem2} yields the
lower bound in the proof of Theorem~\ref{theorem1}. The upper bound is instead 
obtained by a more standard argument which we recall in the next section.

\begin{proof}[Proof of Proposition~\ref{proplowerbound}]
We choose a Lipschitz set $\Omega'$ such that
$\omega\subset\subset\Omega'\subset\subset\Omega$. 
For any $\delta>0$, 
Proposition
\ref{eqseqBV} applied to the pair of sets
$\Omega'\subset\subset\Omega$ gives functions
$v_{k,\delta}\in BV(\Omega';\Z^N)$ such that
\begin{equation}\label{eqliminfkliminfi}
\liminf_{k\to\infty} \frac{1}{k} \sum_{h=0}^k
p_{\Gamma_h,\Omega'}(v_{k,\delta}) \le (\ln 2)(1+\delta) \liminf_{i\to\infty}
E_{\e_i}[u_i,\Omega] \,,
\end{equation}
with $\lim_{k\to\infty} \|v_{k,\delta}-u_0\|_{L^1(\Omega')}=0$ and we can also assume that 
\begin{equation*}
\frac{1}{k} \sum_{h=0}^k
p_{\Gamma_h,\Omega'}(v_{k,\delta}) + |Dv_{k,\delta}|(\Omega')\le A_\delta\quad \forall k\,
\end{equation*}
(since such a bound holds for the subsequence in $k$ that realizes the liminf in
(\ref{eqliminfkliminfi})). 
The quantity $A_\delta$ may depend both on $\delta$ and on the
original sequence $u_i$, but not on the parameters which will be chosen
below. 

We define, for $h\in\N$,
\begin{equation*}
\Omega_h=\{x\in \R^2: B_{2^{-h}}(x)\subset\Omega'\}\,.
\end{equation*}
For a fixed $m\ge 3$ we define iteratively, for all $h\in \N\cap[0,k+m]$,
the functions $u_{k,\delta,m,h}\in BV(\Omega_h;\R^N)$ by
\begin{equation*}
u_{k,\delta,m,h} = 
\begin{cases}
v_{k,\delta} & \text { if } h\ge k\,, \\
u_{k,\delta,m,h+m}\ast \varphi_{h}& \text{ else.}
\end{cases}
\end{equation*}
The mollifier $\varphi_h$ was defined at the beginning of Section
\ref{control}. From the definition of $u_{k,\delta,m,h}$ we obtain, dropping
the first three indices to simplify the notation,
\begin{alignat*}{1}
\|u_h-u_{h+m}\|_{L^1(\Omega_h)}=\|u_{h+m}-u_{h+m}*\varphi_h\|_{L^1(\Omega_h)}&\le C 2^{-h} |Du_{h+m}|(\Omega_{h+m}) \\
&\le C
2^{-h} |Dv_{k,\delta}|(\Omega')\le C 2^{-h} A_\delta \,, 
\end{alignat*}
which, summing the geometric iteration, gives
\begin{equation*}
\|u_{k,\delta,m,h} -v_{k,\delta}\|_{L^1(\Omega_h)} \le C 2^{-h}
|Dv_{k,\delta}|(\Omega) \le C 2^{-h} A_\delta\,. 
\end{equation*}
Recalling that $v_{k,\delta}$ has value in $\Z^N$ a.e. we also obtain
\begin{equation}\label{eqdistuhZN}
\|\dist(u_{k,\delta,m,h}, \Z^N)\|_{L^1(\Omega_h)}\le C 2^{-h}
|Dv_{k,\delta}|(\Omega)\le 
C 2^{-h} A_\delta \,. 
\end{equation}
We further observe that by summing the telescoping series we get 
\begin{alignat}{1}\nonumber
\sum_{h=0}^k \left[ |Du_{h+m}|(\Omega_{h+m}) - |Du_{h}|(\Omega_h)\right] 
&= \sum_{h=k+1}^{k+m}|Du_{h}|(\Omega_h) - 
\sum_{h=0}^{m-1}|Du_{h}|(\Omega_h)\label{eqsumDu}\\
& \le C m A_\delta\,.
\end{alignat}
Pick $\zeta\in(0,1/4)$ and $t\in\N$, with $m\ge t\ge 2$
and suppose that $\zeta k\ge  m$ (we shall focus on large $k$). We claim that there exists $h\in (\zeta
k,k-\zeta k)\cap \N$ such that 
\begin{equation}\label{eqhgoodp}
p_{\Gamma_{h+t},\Omega'}(v_{k,\delta}) \le (1+5\zeta) \frac{1}{k} \sum_{j=0}^k
p_{\Gamma_j,\Omega'}(v_{k,\delta})\,.
\end{equation}
By (\ref{eqsumDu}) we can choose $h$
such that (\ref{eqhgoodp}) holds and additionally
\begin{equation*}
|Du_{k,\delta,m,h+m}|(\Omega_{h+m})-
|Du_{k,\delta,m,h}|(\Omega_h) 
\le c \frac{m}{k \zeta} A_\delta\,.
\end{equation*}

We apply Proposition~\ref{propconstruction} to the (smooth) function
$u_{k,\delta,m,h+m}$, with the chosen value of $h$ and the pair of domains
$\omega\subset\subset\Omega_h$, with parameters $M$ and $\eta$ still to be
chosen. 
Since $h$ was chosen in dependence on the other parameters, we denote the
result by $w_{k,\delta,m,t,M,\eta}$. Since $h\ge \zeta k$, for $k$ large
enough (on a 
scale depending on $\zeta$) the
assumption on the domains is fulfilled.
By the convexity of $p_{\Gamma_{h+t}}(u)$ and the translation invariance of
the kernel, denoting $u_z(x)=u(x-z)$, we have 
\begin{eqnarray*}
p_{\Gamma_{h+t},\Omega_h}(u*\varphi_{h+m})&\leq&\int_{\R^2}\varphi_{h+m}(z)p_{\Gamma_{h+t},\Omega_h}(u_z) \,dz\\
&\leq& \int_{\R^2}\varphi_{h+m}(z)p_{\Gamma_{h+t},\Omega_{h+m}}(u) \,dz = p_{\Gamma_{h+t},\Omega_{h+m}}(u) 
\end{eqnarray*}
Since by definition $u_{k,\delta,m,h+m}=u_{k,\delta,m,h+2m}*\varphi_{h+m}$, iterating the above inequality we get
$$
p_{\Gamma_{h+t},\Omega_h}(u_{k,\delta,m,h+m})\leq p_{\Gamma_{h+t},\Omega'}(v_{k,\delta}) 
$$
We then obtain
\begin{alignat}{1}
(\ln 2) \, E_0^\rel[w_{k,\delta,m,t,M,\eta},\omega] \le& \frac{1}{k} \sum_{h=0}^k
p_{\Gamma_h,\Omega'}(v_{k,\delta}) + 
(4\zeta+\eta+c 2^{-t}) A_\delta
+\frac{C_M}\eta 2^{h+t} 2^{-h-m} A_\delta
\nonumber
\\
&+\frac{C_M}\eta 2^{t} A_\delta^{5/6} 
\left(\frac{m}{k \zeta} A_\delta\right)^{1/6}+ \frac{c}{M^{1/2}} 2^{t/2} A_\delta
\nonumber
\end{alignat}
(for all $k$ large enough). 
Therefore, setting $\eta=\zeta$ and recalling (\ref{eqliminfkliminfi}), we get
\begin{alignat*}{1}
\liminf_{t\to\infty}&
\liminf_{M\to\infty}
\liminf_{\zeta\to0}
\liminf_{m\to\infty}
\liminf_{k\to\infty}
E_0^\rel[w_{k,\delta,m,t,M,\eta},\omega] \\
& \le \liminf_{k\to\infty} \frac{1}{\ln 2}\frac{1}{k} \sum_{h=0}^k
p_{\Gamma_h,\Omega'}(v_{k,\delta}) \\
&\le(1+\delta) \liminf_{i\to\infty} E_{\e_i}[u_i,\Omega] \,,
\end{alignat*}
and therefore
\begin{alignat*}{1}
\liminf_{\delta\to0}
\liminf_{t\to\infty}
\liminf_{M\to\infty}
\liminf_{\zeta\to0}
\liminf_{m\to\infty}
\liminf_{k\to\infty}
E_0^\rel[w_{k,\delta,m,t,M,\eta},\omega] & \le \liminf_{i\to\infty}
E_{\e_i}[u_i,\Omega] \,.
\end{alignat*}
Analogously
\begin{alignat*}{1}
\limsup_{\delta\to0}
\limsup_{t\to\infty}
\limsup_{M\to\infty}
\limsup_{\zeta\to0}
\limsup_{m\to\infty}
\limsup_{k\to\infty}
\|w_{k,\delta,m,t,M,\eta}-u_0\|_{L^1(\omega)}=0\,.
\end{alignat*}
Taking a diagonal subsequence we  conclude the proof. 
\end{proof}

\section{Upper bound}\label{upperbound}
As regards to the upper bound required for the proof of Theorem~\ref{theorem1}
one can use the abstract result of \cite{CacaceGarroni}. Indeed  one can show
that the abstract $\Gamma$-limit $E$ 
exists and takes the form, for  $u\in
BV(\Omega;\Z^N)$,
$$
(\Gamma\hbox{-}\lim_{\e\to0} E_\e) [u,\Omega]= \int_{\Omega\cap J_u}\varphi([u],\nu_u)\,\calH^1\,,
$$
for some $\varphi$ to be determined. Now take for any $\nu\in S^1$ and $s\in
\Z^N$ a one-dimensional function with
a single interface, i.e., 
\begin{equation}
u(x)=
\begin{cases}
0 & \text{ if } x\cdot \nu<0\\
s & \text{ if } x\cdot \nu\ge 0\,.
\end{cases}
\end{equation}
Let $u_\e$ be a mollification of $u$ at scale $\e$. By an explicit computation
one can show that 
\begin{equation}
\lim_{\e\to0} E_\eps[u_\eps, B_1(0)] = 2 \gamma_0(\nu,s)\,.
\end{equation}
Therefore $\varphi\le \gamma_0$. By the lower semicontinuity of the
$\Gamma\hbox{-}\lim_{\e\to0} E_\e$ and the abstract relaxation results of
\cite{AmbrosioBraides1990a,AmbrosioBraides1990b,AmbrosioFP} the integrand
$\varphi$ is 
$BV$-elliptic, and therefore $\varphi\le \gamma_0^\rel$. Equivalently,
$E_0^{\mathrm{rel}}\le \Gamma\hbox{-}\lim_{\e\to0} E_\e$. This yields
the upper bound and finishes the proof of Theorem \ref{theorem1}.

\section*{Acknowledgements}
This work was partially supported by the Deutsche Forschungsgemeinschaft
through the Forschergruppe 797  {\em ``Analysis and computation of
  microstructure in finite plasticity''},  
projects Co304/4-1 and Mu1067/9-1. 

\bibliographystyle{amsplain}
\bibliography{cogamu}

\end{document}